\newtheorem{thm}{Theorem}
\newtheorem{cor}[thm]{Corollary}
\newtheorem{defi}[thm]{Definition}
\newtheorem{lem}[thm]{Lemma}
\newtheorem{claim}[thm]{Claim}
\newtheorem{obs}[thm]{Observation}
\newtheorem{remark}[thm]{Remark}
\def\HH{\mathcal {H}}
\def\cH{\mathcal {H}}
\def\RR{\mathcal {R}}
\def\CC{\mathcal {C}}
\def\FF{\mathcal {F}}
\def\EE{\mathcal {E}}
\def\AA{\mathcal {A}}
\def\TT{\mathcal {T}}
\def\MM{\mathcal {M}}
\def\BB{\mathcal {B}}
\def\red1{\color{red}1\color{black}}
\def\blue1{\color{blue}1\color{black}}
\author[Bal\'azs Keszegh]{Bal\'azs Keszegh\affiliationmark{1,2}\thanks{Research supported by the J\'anos Bolyai Research Scholarship of the Hungarian Academy of Sciences, by the National Research, Development and Innovation Office -- NKFIH under the grant K 132696 and FK 132060, by the \'UNKP-21-5 and \'UNKP-22-5 New National Excellence Program of the Ministry for Innovation and Technology from the source of the National Research, Development and Innovation Fund and by the ERC Advanced Grant ``ERMiD''. This research has been implemented with the support provided by the Ministry of Innovation and Technology of Hungary from the National Research, Development and Innovation Fund, financed under the  ELTE TKP 2021-NKTA-62 funding scheme.}}
\title[A new discrete theory of pseudoconvexity]{A new discrete theory of pseudoconvexity}
\affiliation{
	Alfr\'ed R\'enyi Institute of Mathematics, Budapest, Hungary\\
	ELTE E\"otv\"os Lor\'and University, Budapest, Hungary}
\keywords{geometric hypergraph, pseudohalfplane, convexity, Helly's theorem}
\begin{document}
	
\publicationdata{vol. 25:1}{2023}{15}{10.46298/dmtcs.9255}{2022-03-28; 2022-03-28; 2022-09-30; 2023-04-25}{2023-04-28}

\maketitle

\begin{abstract}
	Recently geometric hypergraphs that can be defined by intersections of pseudohalfplanes with a finite point set were defined in a purely combinatorial way. This led to extensions of earlier results about points and halfplanes to pseudohalfplanes, including polychromatic colorings and discrete Helly-type theorems about pseudohalfplanes. 
	
	Here we continue this line of research and introduce the notion of convex sets of such pseudohalfplane hypergraphs. In this context we prove several results corresponding to classical results about convexity, namely Helly's Theorem, Carath\'eodory's Theorem, Kirchberger's Theorem, Separation Theorem, Radon's Theorem and the Cup-Cap Theorem. These results imply the respective results about pseudoconvex sets in the plane defined using pseudohalfplanes. 
	
	It turns out that most of our results can be also proved using oriented matroids and topological affine planes (TAPs) but our approach is different from both of them.  Compared to oriented matroids, our theory is based on a linear ordering of the vertex set which makes our definitions and proofs quite different and perhaps more elementary. Compared to TAPs, which are continuous objects, our proofs are purely combinatorial and again quite different in flavor. Altogether, we believe that our new approach can extend our understanding of these fundamental convexity results.
\end{abstract}

\section{Introduction}\label{sec:intro}

Given a (finite) point set $P$ and a family of regions $\RR$ (e.g., the family of all halfplanes) in the plane (or in higher dimensions), let $\HH$ be the hypergraph with vertex set $P$ and for each region of $\RR$ having a hyperedge containing exactly the same points of $P$ as this region. There are many interesting problems that can be phrased as a problem about hypergraphs defined this way, which are usually referred to as \emph{geometric hypergraphs}. This topic has a wide literature, researchers considered problems where $\RR$ is a family of halfplanes, axis-parallel rectangles, translates or homothets of disks, squares, convex polygons, pseudo-disks and so on. There are many results and open problems about the maximum number of hyperedges of such a hypergraph, coloring questions and other properties. For a survey of some of the most recent results see the introduction of \cite{abab} and of \cite{dp}, for an up-to-date database of such results with references see the webpage \cite{cogezoo}.\footnote{As this paper is in many ways a continuation of \cite{kbdiscretehelly} by the same author, the first three paragraphs of the introduction rely heavily on its introduction.}

One of the most basic families is the family of halfplanes, for which already many problems are non-trivial. Among others one such problem was considered in \cite{smorodinsky-yuditsky} where they prove that the vertices of every hypergraph defined by halfplanes on a set of points can be $k$-colored such that every hyperedge of size at least $2k+1$ contains all colors. In \cite{abafree} a generalization of this result was considered which we get by replacing halfplanes with the family of translates of an unbounded convex region (e.g., an upwards parabola). It turned out that the statement is true even when halfplanes are replaced by pseudohalfplanes. The main tools for proving these were the abstract notions of \emph{ABA-free hypergraphs} and \emph{pseudohalfplane hypergraphs}, shown to be equivalent to those hypergraphs that can be defined on points by upwards pseudohalfplanes and pseudohalfplanes, respectively.\footnote{The exact definition can be found later in Section \ref{sec:basicdef} and its connection to the geometric setting is deferred to Section \ref{sec:geom}.} This formulation had the promise that many other statements about halfplane hypergraphs can be generalized to pseudohalfplane hypergraphs in the future. While this combinatorial formulation has the disadvantage of being less visual and thus somehow less intuitive than the geometric setting, it has many advantages, among others covering a much wider range of hypergraphs, also, being purely combinatorial, it might have algorithmic applications as well. One recent application is a similar polychromatic coloring result about disks all containing the origin \cite{dp} where after observing that in every quadrant of the plane the disks form a family of pseudohalfplanes they can apply the results from \cite{abafree}.

In \cite{abafree} the equivalent of the convex hull vertices in the plane (more precisely, the points on the boundary of the convex hull) was defined for pseudohalfplane hypergraphs and called \emph{unskippable vertices} and this made it possible to generalize the proof idea of \cite{smorodinsky-yuditsky} from halfplanes to pseudohalfplane hypergraphs. To make it more intuitive, we call unskippable vertices \emph{extremal vertices} from here on. Exact definitions of these notions are postponed to Section \ref{sec:basicdef}.

We define convex sets of a pseudohalfplane hypergraph $\HH$ as sets that are intersections of some hyperedges of $\HH$ and we refer to these sets as \emph{pseudoconvex sets}. Notice that this is again in parallel with the geometric definition of convex sets (or more precisely, of the subsets of a base point set $P$ that we get by intersecting $P$ with convex sets). We have seen that already with halfplanes one can phrase many interesting problems, but using the notion of convex sets we can finally phrase many of the formative problems of discrete geometry, like the classical Helly's Theorem, Carath\'eodory's Theorem, Radon's Theorem, Erd\H os-Szekeres problem and the list goes on. While all of these problems are about discrete point sets, there are two essentially different types among them, in one type the whole statement is about some fixed point set $P$ while in the other type there is a point outside of $P$ that plays a role. E.g., in Carath\'eodory's theorem the whole statement is about a fixed point set,  while the classical Helly's theorem guarantees the existence of a new point in the plane with some property. Problems of the first type translate immediately to statements about pseudoconvex sets and it is interesting to see if they remain true in this more general setting. For the second type we can also pose a corresponding problem about pseudoconvex sets, where we want to extend the vertex set of the hypergraph $\HH$ with one or more vertices (we can extend the original hyperedges on the new vertices as we like) so that it remains a pseudohalfplane hypergraph and has the required property. Observe that for the first type a result about pseudoconvex sets implies the corresponding geometric result but for the second type such an implication does not immediately follow, although it still follows with a bit of additional work, as we will see later (illustrated in the proof of Theorem \ref{thm:weakhellypsconvexplane}).

Following this approach, we prove results about pseudoconvex sets that correspond to the planar case of some of the most important results of discrete geometry, namely Helly's Theorem, Carath\'eodory's Theorem, Radon's Theorem and the Cup-Cap Theorem. 

Finally, we discuss the relation of our definitions and results to previous similar results. A careful analysis reveals that we have mostly rediscovered things (namely Helly's Theorem, Carath\'eodory's Theorem, Radon's Theorem) that were known for a long time about oriented matroids (in particular about rank $3$ acyclic oriented matroids) or not so long about topological affine planes (TAPs, in short, for which the Cup-Cap theorem is also known). In addition to our direct proofs, we describe how the respective results about oriented matroids and TAPs imply our results and to what extent can these implications be reversed.

As said in \cite{bjorner}, in the past several people rediscovered what amounts to an axiom system for oriented matroids (or some special case thereof), without realizing that their work overlapped with already published papers. Our contribution can be regarded as an extension of this sequence of axiom systems by a new and interesting axiomatization of acyclic oriented matroids of rank $3$. However, we think that our methods are interesting on their own as they give a completely different approach based on hypergraphs on vertices that have a linear ordering on them. Also, while at the end our particular results are not stronger, formally our approach handles a bigger family of hypergraphs compared to what comes from rank $3$ oriented matroids.
Overall, the following sentence quoted from I.M. Gel'fand in \cite{bjorner} in relation to rediscoveries of the above mentioned axiom systems certainly applies to our case as well: "If you are not too ambitious, it can be a pleasure to realize that you have rediscovered something previously known, because at least then you know that you were on the right track."

\smallskip
The paper is structured as follows. After introducing the basic notions and previous Helly-type results about pseudohalfplanes in Section \ref{sec:basicdef} and Section \ref{sec:hellypshp}, in Section \ref{sec:convex} we generalize these Helly-type theorems to pseudoconvex sets, which are proved in Section \ref{sec:proofconvex}. The main tools used in our proofs are introduced in Section \ref{sec:extremal}. In the remainder of Section \ref{sec:intro} we show generalizations of Carath\'eodory's Theorem, Kirchberger's Theorem, Separation Theorem, Radon's Theorem and the Cup-Cap Theorem, these are proved in Section \ref{sec:proofclassical} and Section \ref{sec:proofcupcap}. In Section \ref{sec:geomconseq} we discuss the geometric consequences of our otherwise combinatorial results, the proofs of these connections can be found in Section \ref{sec:geom}. In Section \ref{sec:discussion} we give directions of possible future research. In Section \ref{sec:constr} we give simple examples showing the optimality of our Helly-type results. Finally, in Section \ref{sec:relations} we give a summary of related work and its connection to our results. In particular we discuss in detail to what extent and how the convexity results about TAPs and oriented matroids of rank $3$ imply many of our results and vice versa. While Section \ref{sec:relations} is quite long, we think it is useful to give a better understanding of the relation between these competing notions as most people tend to know one better than the others and so even for people familiar with them (let alone for the rest of us) it might not be immediate to see these connections.

\section{Basic definitions}\label{sec:basicdef}

\begin{defi}
	Given a hypergraph $\cH$ on vertex set $S$ and a subset $S'$ of $S$, the {\em subhypergraph of $\cH$ induced by $S'$} is the hypergraph on vertex set $S'$ with hyperedge set $\{ H \cap S': H \in \cH \}$ and it is denoted by $\cH[S']$.
\end{defi}

The above notion of induced subhypergaphs plays an important note in our studies. It can exhibit interesting behaviours, e.g., even if a hypergraph is uniform, its induced subhypergraph may not be so.

In the forthcoming sections we sometimes mention the geometric meaning of our definitions in the context of pseudohalfplanes in the plane. The exact definitions of pseudohalfplanes and hypergraphs defined by them can be found in Section \ref{sec:geomconseq}, thinking of them as normal halfplanes is usually good enough to get the right intuition.

As introduced in \cite{abafree}, ABA-free hypergraphs and their unskippable vertices are defined as follows.

\begin{defi}\label{def:ABA}
	A hypergraph $\mathcal F$ on an ordered vertex set is called {\em ABA-free} if $\mathcal F$ does not contain two hyperedges $A$ and $B$ for which there are three vertices $x<y<z$ such that $x,z\in A\setminus B$ and $y\in B\setminus A$.
\end{defi}

We imagine the vertices on a horizontal line, and thus if $x<y$ then we may say that $x$ is \emph{to the left from} $y$ and so on.

\begin{defi}
	In a hypergraph $\FF$ on an ordered vertex set, a vertex $a$ is {\em skippable} if there exists an $A\in \FF$ such that $\min(A)< a < \max(A)$ and $a\notin A$.
	In this case we say that $A$ {\em skips} $a$. 
	A vertex $a$ is {\em unskippable} if there is no such $A$.
\end{defi}

\begin{lem}\label{lem:unskippable}\cite{abafree}
	If $\mathcal F$ is ABA-free, then every $A\in \mathcal F$ contains an unskippable vertex.
\end{lem}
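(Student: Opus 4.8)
The plan is to argue by contradiction: assuming that every vertex of a fixed $A\in\mathcal F$ is skippable, I will exhibit two hyperedges and three vertices realizing the forbidden ABA pattern. The first step is to record a \emph{separation} consequence of ABA-freeness: for any two hyperedges $X,Y\in\mathcal F$, either every vertex of $X\setminus Y$ lies to the left of every vertex of $Y\setminus X$, or vice versa. Indeed, an interleaving of these two difference sets immediately produces either three vertices $x<y<z$ with $x,z\in X\setminus Y$ and $y\in Y\setminus X$ (an ABA pattern), or the symmetric pattern with the roles of $X$ and $Y$ swapped, both of which are forbidden. This dichotomy is the workhorse of the whole argument.

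Next I would analyse a single skip. If $B$ skips $a\in A$, then $a\in A\setminus B$ while $\min B<a<\max B$; applying separation to $A$ and $B$ shows that $\min B$ and $\max B$ cannot both lie outside $A$, since otherwise $B\setminus A$ would meet both sides of the vertex $a\in A\setminus B$. Hence every witness $B$ of a skip of $a$ satisfies exactly one of: $\min B\in A$ with $\min B<a$, which I call an \emph{$L$-witness}, or $\max B\in A$ with $\max B>a$, an \emph{$R$-witness}; moreover separation forces the exterior part $B\setminus A$ of an $R$-witness to lie entirely to the left of $a$, and that of an $L$-witness entirely to its right. In particular $\min A$ can only be skipped by an $R$-witness and $\max A$ only by an $L$-witness. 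Assigning to each vertex of $A$ the type of one of its witnesses, and reading $A$ from left to right, the types start at $R$ and end at $L$, so there are consecutive vertices $a_i<a_{i+1}$ of $A$ with $a_i$ of type $R$ (witness $B$) and $a_{i+1}$ of type $L$ (witness $C$).

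The heart of the proof is to derive a contradiction from this transition pair. Because $a_i,a_{i+1}$ are consecutive in $A$, while the exterior of $B$ lies left of $a_i$ and the exterior of $C$ lies right of $a_{i+1}$, in the main case one has $a_i\in C$ and $a_{i+1}\in B$, so that $a_i\in C\setminus B$ and $a_{i+1}\in B\setminus C$. Since $a_i<a_{i+1}$, separation applied to $B$ and $C$ then forces $C\setminus B$ to lie entirely to the left of $B\setminus C$. I would next locate a single vertex of $C\setminus B$ lying to the \emph{right} of a vertex of $B\setminus C$ -- for instance $\max C$ when $\max C>\max B$, or dually $\min B$ when $\min B<\min C$ -- which contradicts this separation, hence contradicts ABA-freeness. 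As $A$ is finite, this completes the contradiction and shows that some vertex of $A$ is unskippable.

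The main obstacle is precisely the degenerate subcases that the clean argument above leaves open: when neither $\max C>\max B$ nor $\min B<\min C$ holds, i.e.\ the span of one witness is nested inside the span of the other, and when a witness happens to lie inside $A$ and therefore offers no exterior vertex to exploit. I expect to handle these by sharpening the choice of the transition pair -- for example, selecting among all $R$-witnesses of $a_i$ one whose reach $\max B$ is as large as possible, and dually for $C$ -- which should force the two spans to cross and reinstate a vertex on the wrong side, or else by an inner induction on the number of hyperedges after deleting a witness that is nested inside another and hence cannot be the sole cause of any skip. Getting this bookkeeping right, rather than any single inequality, is where the real work lies.
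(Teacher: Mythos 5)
Your separation dichotomy is correct and is a sensible tool, and the closing step (extracting an ABA pattern from $\min B<a_i<a_{i+1}$ or from $a_i<a_{i+1}<\max C$) is valid \emph{provided} all of the hypotheses you bundle into the ``main case'' actually hold. The trouble starts with the witness classification. With your definitions a witness can be an $L$- and an $R$-witness at once, so ``exactly one'' is wrong; worse, the direction claims are false: for $A=\{1,3,4\}$ and $B=\{1,2,4\}$ the pair is ABA-free, $B$ skips $a=3$ and is an $L$-witness ($\min B=1\in A$), yet its exterior $\{2\}$ lies to the \emph{left} of $a$. The exterior directions are only forced when you classify by which endpoint lies \emph{outside} $A$ ($\min B\notin A$ forces the exterior to the left, $\max B\notin A$ forces it to the right); but then witnesses with both endpoints in $A$ --- including witnesses contained in $A$ --- form a third class with no controlled exterior, so some vertices receive no type and the ``starts at $R$, ends at $L$'' transition argument no longer applies as stated. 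Similarly, $a_{i+1}\in B$ is asserted rather than proved, and it genuinely can fail ($B$ may skip $a_{i+1}$ as well).

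The deeper problem is that the cases you defer are not bookkeeping: they are the entire content of the lemma, and they cannot be closed by any argument confined to $a_i$, $a_{i+1}$ and the two witnesses. Take $A=\{0,1,2,3\}$, $B=\{0,2,3\}$, $C=\{0,1,3\}$. This family is ABA-free; $a_i=1$ and $a_{i+1}=2$ are consecutive in $A$; $B$ skips $1$ and contains $2$; $C$ skips $2$ and contains $1$; both exteriors are empty, so every exterior condition holds vacuously; and the spans are nested, so neither of your violation conditions is available --- and indeed no ABA pattern exists among the three sets, nor can one be manufactured from them. The lemma survives here only because $0$ and $3$ are unskippable; that is, any correct proof must invoke witnesses of \emph{other} vertices of $A$ and iterate (a global extremal choice or an induction over the whole hyperedge), which is precisely the part you leave at ``I expect to handle these.'' Your suggested patches (maximizing $\max B$, deleting a witness nested inside another) do not touch this configuration, since neither witness is nested in the other and each is the sole witness of the vertex it skips. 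So, as written, the proposal is an unfinished sketch whose missing part is the actual difficulty. (For comparison purposes: the paper does not reprove this lemma but imports it from the cited reference, so there is no in-paper proof to measure against; on its own terms, though, the argument is incomplete.)
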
 

It was shown in \cite{abafree} that ABA-free hypergraphs are exactly the geometric hypergraphs defined on planar points sets by upwards pseudohalfplanes. The intuition behind defining unskippable vertices is that if the ABA-free hypergraph is defined on a point set such that there is a hyperedge corresponding to every upwards halfplane, then the unskippable vertices are exactly those points that are on the geometric upper convex hull of the point set.

Let $\bar{\FF}$ denote the family of the complements of the hyperedges of $\FF$. It is easy to see and was shown in \cite{abafree} that if $\FF$ is ABA-free then $\bar{\FF}$ is also ABA-free. 

Now we are ready for the definition of pseudohalfplane hypergraphs as introduced in \cite{abafree}.

\begin{defi}
	A hypergraph $\HH$ on an ordered vertex set is a \emph{pseudohalfplane hypergraph} if there exists an ABA-free $\FF$ on the same ordered vertex set such that $\HH\subseteq \FF\cup \bar{\FF}$. Call $\TT=\HH\cap \FF$ the topsets and $\BB=\HH\cap \bar{\FF}$ the bottomsets, observe that both $\TT$ and $\BB$ are ABA-free. The unskippable vertices of $\FF$ (resp.\ $\bar{\FF}$) are called \emph{topvertices} (resp.\ \emph{bottomvertices}).
\end{defi}

Notice that the top- and bottomvertices depend only on $\FF$ and not on $\HH$ itself. For a given $\HH=\TT\cup\BB$ multiple $\FF$'s can witness that it is a pseudohalfplane hypergraph which can lead to different sets of top and bottomvertices. The smallest valid family is $\TT\cup\bar{\BB}$, which in particular gives the largest set of top and bottomvertices. For these reasons, when given a pseudohalfplane hypergraph $\HH$, even when not said explicitly, there is an ABA-free $\FF$ corresponding to it.

We call a pseudohalfplane hypergraph $\HH$ \emph{maximal} if no further hyperedge can be added without ruining that it is a pseudohalfplane hypergraph (with the given order of vertices).

It was shown in \cite{abafree} that pseudohalfplane hypergraphs are exactly the geometric hypergraphs defined on points sets by pseudohalfplanes.

Now we can proceed by defining the extremal vertices of a pseudohalfplane hypergraph, which generalize points on the convex hull of a set of points:

\begin{defi}	
	Given a pseudohalfplane hypergraph $\HH$ with corresponding ABA-free hypergaph $\FF$, the union of the topvertices and bottomvertices is called the \emph{extremal vertices} of $\HH$ and is denoted by $E(\HH,\FF)$ (or simply $E(\HH)$ when $\FF$ is clear from the context or even $E$ when $\HH$ is also clear from the context).\footnote{Notice that $E(\HH,\FF)$ actually depends only on $\FF$ and not on $\HH$, but it is more intuitive to think about it as the convex hull of $\HH$ nevertheless.}
\end{defi}

In light of the geometric setting the following is a natural way to define convex sets which turns out to be also very fruitful:

\begin{defi}\label{def:pseudoconvex}
	Given a hypergraph $\HH$ on an ordered set $S$ of vertices, the family of those subsets which are intersections of hyperedges of $\HH$ are called the \emph{convex sets} of $\HH$. The \emph{convex hull} of a subset $S'\subseteq S$ of the vertices is the convex set $Conv(S')=\cap \{H:H\in \HH,S'\subseteq H\}$ (where we define $\cap \emptyset:=S$).
\end{defi}

Clearly, given a point set $P$ in the plane, the subsets of $P$ which are defined by (geometric) convex sets are convex sets of the respective (pseudo)halfplane hypergraph. 

To state many of our results, we need the slightly technical definition of an extension of a pseudohalfplane hypergraph by new hyperedges or vertices:

\begin{defi}\label{def:ext}
	We say that a hypergraph $\HH'$ is an \emph{extension} of the pseudohalfplane hypergraph $\HH$ on vertex set $S$ if $\HH'$ is also a pseudohalfplane hypergraph on vertex set $S$ and $\HH$ is a proper subfamily of $\HH'$.
	
	We say that a pseudohalfplane hypergraph $\HH$ on vertex set $S$ \emph{can be extended by a vertex set $V$} (when $V=\{v\}$ we also say that it can be extended by the vertex $v$) if there exists a pseudohalfplane hypergraph $\HH^+$ on vertex set $S\cup V$ such that the topsets (resp. bottomsets) of $\HH^+$ are in bijection with the topsets (resp. bottomsets) of $\HH$ and for every such pair $H^+\in \HH^+$ and $H\in \HH$ we have $H^+\cap S=H$. Furthermore, if in $\HH$ two subsets $F,\bar F\subseteq S$ are complement-pairs on $S$ (i.e., one of them is a topset and the other is a bottomset, $F\cup \bar F=S$ and they are disjoint) then the corresponding hyperedges $F^+$ and $\bar F^+$ of $\HH^+$ are complement-pairs on $S\cup V$.\footnote{Thus in a sense when extending $\HH$ we in fact extend the corresponding ABA-free $\FF$.} We also say that $\HH^+$ is an extension of $\HH$ to the vertex set $V$ (or to the vertex $v$ if $V=\{v\}$). The extension of a convex set $C= H_1\cap\dots \cap H_l$ of $\HH$ is the convex set $C^+=H_1^+\cap\dots \cap H_l^+$ of $\HH^+$.
\end{defi}

The \emph{dual} of a hypergraph $\HH$ is the hypergraph we get if we reverse the role of vertices and hyperedges while reversing the incidence relation. 

Duals of pseudohalfplane hypergraphs were studied in \cite{abafree} alongside pseudohalfplane hypergraphs.
In addition, pseudohemisphere hypergraphs were defined and studied, a common generalization of pseudohalfplane hypergraphs and duals of pseudohalfplane hypergraphs.

\begin{defi}\label{def:pshemi}\cite{abafree}
	A {\em pseudohemisphere hypergraph} is a hypergraph $\HH$ on an ordered set of vertices $S$ such that there exists a set $X\subset S$ and an ABA-free hypergraph $\cal F$ on $S$ such that the hyperedges of $\HH$ form some subset of $\{F\Delta X, \bar F \Delta X \mid F\in \cal F\}$ (where $A\Delta B$ denotes the set $(A\setminus B)\cup (B\setminus A)$).
\end{defi}

In \cite{abafree} it was shown that given a point set on a sphere in $3$ dimensions and an arrangement of geometric \emph{pseudohemispheres}, i.e., regions whose boundaries are centrally symmetric simple curves such that any two intersect exactly twice, they define a pseudohemisphere hypergraph ($X$ is the set of vertices contained in one of the pseudohemispheres).

We present some convexity results about pseudohemisphere hypergraphs as well. Note that Definition \ref{def:ext} about extensions can be easily modified for pseudohemisphere hypergraphs.

\section{Results}

\subsection{Helly theorems for pseudohalfplanes}\label{sec:hellypshp}

In \cite{abafree} already some discrete Helly-type theorems were proved about pseudohalfplane hypergraphs and also about pseudohemisphere hypergraphs:

\begin{lem}[Primal Discrete Helly theorem for pseudohalfplanes, $3\rightarrow +1$]\cite{abafree}\label{lem:weakhellypshp}
	Given a pseudohalfplane hypergraph $\HH$ such that every triple of hyperedges has a common vertex, then we can extend $\HH$ to a pseudohalfplane hypergraph by a vertex contained in every hyperedge of the extension.
\end{lem}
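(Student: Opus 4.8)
The plan is to reduce the statement to inserting a single new vertex $v$ into the order, and then to locate a feasible insertion position via a one-dimensional Helly argument whose pairwise step is powered by the hypothesis. First I would set up the reductions. Writing $\HH=\TT\cup\BB$, I fix the witnessing ABA-free family $\FF=\TT\cup\bar\BB$ (noted in Section~\ref{sec:basicdef} to be a valid choice). If $\HH$ has at least three hyperedges, then it contains no complementary pair $F,\bar F$: otherwise the triple $F,\bar F,H$ would have empty common intersection, contradicting the hypothesis; moreover every pair of hyperedges has a common vertex, being contained in some triple. (The cases $|\HH|\le 2$ are immediate and I would dispose of them separately.) Consequently it suffices to insert a new vertex $v$ at some position of the order and to extend $\FF$ to an $\FF^+$ on $S\cup\{v\}$ by putting $v$ into every member of $\TT$ and into no member of $\bar\BB$, provided $\FF^+$ stays ABA-free: then every topset gains $v$, every bottomset gains $v$ as a complement, so $v$ lies in every hyperedge, and the required bijection, the equalities $H^+\cap S=H$, and preservation of complement-pairs all hold by construction.

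Next I would analyse locally where $v$ may be placed. A new ABA pattern in $\FF^+$ can only involve a pair in which exactly one member contains $v$, hence a pair $(T,G)$ with $T\in\TT$ and $G=\bar B\in\bar\BB$. Applying ABA-freeness of $\FF$ to $\{T,G\}$ shows that the \emph{separated part} $T\cap B=T\setminus G$ and the \emph{crossed part} $S\setminus(T\cup B)=G\setminus T$ do not interleave, so a single threshold separates them. A direct check of the three ways $v$ could take part in an ABA pattern with this pair then shows that $\{T\cup\{v\},G\}$ is ABA-free exactly when $v$ is placed on the separated-part side of the crossed part; the admissible positions for this one pair therefore form a ray $P(T,G)$ on the line of the $|S|+1$ insertion slots (all slots if the crossed part is empty, and always nonempty since one may insert at the far end). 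Since two topsets, or two members of $\bar\BB$, never create a new pattern, it suffices to pick a slot $\pi\in\bigcap_{T,G}P(T,G)$.

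The heart of the proof is showing this intersection is nonempty, for which I would invoke one-dimensional Helly (Helly number two for rays on a line) and verify that every two admissible rays meet. Two rays can fail to meet only when one forces $v$ to the left of the crossed part of a pair $(T_1,B_1)$ and the other forces $v$ to the right of the crossed part of a pair $(T_2,B_2)$, with these crossed parts overlapping in the order; setting $p=\min\big(S\setminus(T_1\cup B_1)\big)$ and $q=\max\big(S\setminus(T_2\cup B_2)\big)$, failure means $p\le q$. If the two pairs share a set, the single triple $\{T_1,B_1,B_2\}$ (or $\{T_1,T_2,B_1\}$) has a common vertex lying in both separated parts, hence strictly left of $p$ and strictly right of $q$ at once; this is impossible. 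In the remaining four-set case I would take common vertices $r_a,r_b,r_c,r_d$ of the four triples on $\{T_1,T_2,B_1,B_2\}$ and, writing $G_i=\bar{B_i}$, run a short case analysis on the memberships of $p$ and $q$: if any of $p\notin T_2$, $q\notin T_1$, $q\in G_1$, $p\in G_2$ holds, then one of the witnesses together with $p$ or $q$ produces a forbidden ABA pattern in $\FF$ with $A\in\{T_1,T_2\}$ and $B\in\{G_1,G_2\}$ (for instance $p\notin T_2$ gives $r_a,r_d\in T_2\setminus G_1$ and $p\in G_1\setminus T_2$ with $r_a<p<r_d$); and otherwise $p\in T_2\cap B_2$ and $q\in T_1\cap B_1$ force $p>q$ and $q<p$, again a contradiction. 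This rules out the bad case, so $\bigcap_{T,G}P(T,G)\ne\emptyset$.

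Finally I would place $v$ at any $\pi$ in this intersection and declare $\FF^+$ as above; by the local analysis $\FF^+$ is ABA-free, so $\HH^+$ is a pseudohalfplane hypergraph that extends $\HH$ by the vertex $v$ with all the required properties, and $v$ is contained in every hyperedge. I expect the four-set case of the Helly step to be the main obstacle, since it is exactly the point where planar-Helly behaviour must be extracted from the linear order and the ABA-free axiom alone; everything else is bookkeeping.
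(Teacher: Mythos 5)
The paper never proves Lemma \ref{lem:weakhellypshp}: it imports it from \cite{abafree}, so there is no in-paper proof to compare against, and your argument must be judged as an independent one. Judged that way, it is correct, and the route is genuinely self-contained: everything is extracted from the ABA-free axiom and the linear order via a one-dimensional Helly argument over insertion slots, rather than from the extremal-vertex and extension machinery this paper builds on top of \cite{abafree}. I verified the steps you compressed. Fixing $\FF=\TT\cup\bar{\BB}$ and adding $v$ to exactly the members of $\TT$ is a valid reduction once complement pairs inside $\HH$ are excluded; new ABA patterns can only arise in mixed pairs $(T\cup\{v\},\bar{B})$; for such a pair, two-sided ABA-freeness order-separates $T\cap B$ from $S\setminus(T\cup B)$, and the admissible slots then form a single ray \emph{provided} $T\cap B\neq\emptyset$ --- this is exactly the pairwise-intersection property you derived earlier, and it should be invoked explicitly at that point, since if $T\cap B=\emptyset$ the admissible set would be the complement of an interval (two rays) and the Helly-number-two step would collapse. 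The four-set case analysis also checks out: with $r_1\in T_1\cap T_2\cap B_1$, $r_2\in T_1\cap T_2\cap B_2$, $r_3\in T_1\cap B_1\cap B_2$, $r_4\in T_2\cap B_1\cap B_2$ one has $r_1,r_3<p$ and $r_2,r_4>q$; then $p\notin T_2$ gives the pattern $r_1<p<r_4$ against $(T_2,\bar{B_1})$, $q\notin T_1$ gives $r_3<q<r_2$ against $(T_1,\bar{B_2})$, $q\notin B_1$ gives $r_1<q<r_4$ against $(T_2,\bar{B_1})$ (using that $q\notin T_2$ holds automatically since $q\in S\setminus(T_2\cup B_2)$), $p\notin B_2$ gives $r_3<p<r_2$ against $(T_1,\bar{B_2})$ (using $p\notin T_1$), and in the remaining case $p\in T_2\cap B_2$ forces $p>q$, contradicting $p\le q$.

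Two blemishes. First, a wording slip: ``force $p>q$ and $q<p$'' names one inequality twice; the contradiction there is with the standing assumption $p\le q$, not between the two derived facts. Second, and more substantively, dismissing $|\HH|\le 2$ as ``immediate'' hides a real issue: if $\HH$ is a single complement pair $\{F,\bar{F}\}$, then Definition \ref{def:ext} forces the extended hyperedges to remain complementary, so no new vertex can lie in both, and the lemma is \emph{false} under the reading in which ``every triple'' means three distinct hyperedges (the hypothesis is then vacuous). The statement is only correct under the standard Helly convention that every at most three hyperedges --- in particular every pair, and every single hyperedge --- must have a common vertex; under that reading your derivation of pairwise intersection from a third hyperedge becomes unnecessary, and the small cases genuinely are immediate. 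You should state which reading you adopt rather than leave it implicit, because your whole argument (both the exclusion of complement pairs and the nonemptiness of the separated parts) rests on pairwise intersection being available.
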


\begin{lem}[Primal Discrete Helly theorem for pseudohemispheres,  $4\rightarrow +1$]\cite{abafree}\label{lem:weakhellypshs}
	Given a pseudohemisphere hypergraph $\HH$ such that every $4$-tuple of hyperedges has a common vertex, then we can extend $\HH$ to a pseudohemisphere hypergraph by a vertex contained in every hyperedge of the extension.
\end{lem}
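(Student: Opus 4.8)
The plan is to reduce the pseudohemisphere statement to the pseudohalfplane statement of Lemma \ref{lem:weakhellypshp} by exploiting the representation from Definition \ref{def:pshemi}. Fix a witness $(\FF,X)$ so that every hyperedge of $\HH$ has the form $F\Delta X$ or $\bar F\Delta X$ with $F\in\FF$. A first observation is that $\bar F\Delta X=\overline{F\Delta X}$, so the family $\HH\Delta X:=\{H\Delta X:H\in\HH\}$ is a subfamily of $\FF\cup\bar\FF$, i.e.\ a pseudohalfplane hypergraph, and likewise its complement family $\overline{\HH\Delta X}$ is a pseudohalfplane hypergraph, since $\bar\FF$ is again ABA-free. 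The key point is that a vertex $v$ added in an extension $\HH^+$ is either placed inside the new set $X^+$ or outside it, and for a hyperedge $H^+=F^+\Delta X^+$ one has $v\in H^+$ exactly when the two statements ``$v\in F^+$'' and ``$v\in X^+$'' disagree. Hence there are exactly two ways to try to realize a vertex lying in every hyperedge: put $v\notin X^+$ and extend $\HH\Delta X$ by a common vertex, or put $v\in X^+$ and extend $\overline{\HH\Delta X}$ by a common vertex. I will call these Move~1 and Move~2.

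By Lemma \ref{lem:weakhellypshp}, Move~1 succeeds as soon as every three hyperedges of $\HH\Delta X$ share a vertex, and Move~2 as soon as every three hyperedges of $\overline{\HH\Delta X}$ do; after such an extension one simply sets $X^+=X$ (resp.\ $X^+=X\cup\{v\}$) and reads off the pseudohemisphere witness $(\FF^+,X^+)$. So the whole task reduces to showing that at least one of these two triple-wise conditions holds. To connect them with the hypothesis, note first that (for $|\HH|\ge 4$, the smaller cases being direct since any at most three hyperedges already have a common vertex) the $4$-wise hypothesis forces every triple $H_1,H_2,H_3$ of $\HH$ to have a common vertex $u$; translating through $\Delta X$ one checks that if $u\notin X$ then $\{H_1\Delta X,H_2\Delta X,H_3\Delta X\}$ has the common vertex $u$, while if $u\in X$ then $\{\overline{H_1\Delta X},\overline{H_2\Delta X},\overline{H_3\Delta X}\}$ has $u$ as a common vertex. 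Thus each triple is \emph{assigned} to Move~1 or to Move~2 according to which side of $X$ its common vertex may be chosen, and the remaining difficulty is to rule out a \emph{split}, in which some triples are assignable only to Move~1 and others only to Move~2.

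The main obstacle is exactly this no-split claim: if a triple $A_1,A_2,A_3$ has $\emptyset\ne A_1\cap A_2\cap A_3\subseteq X$ and a triple $B_1,B_2,B_3$ has $\emptyset\ne B_1\cap B_2\cap B_3\subseteq S\setminus X$, then the two common intersections are disjoint, so these (at most) six hyperedges have empty total intersection, and the content is to upgrade this to a \emph{four}-element subfamily with empty intersection, contradicting the hypothesis. To prove it I would work on the two sides separately: both induced subhypergraphs $\HH[X]$ and $\HH[S\setminus X]$ are pseudohalfplane hypergraphs (the restriction of $\HH$ to $S\setminus X$ agrees with $(\FF\cup\bar\FF)[S\setminus X]$, and on $X$ with its complementation), and on $X$ the restrictions of the $A_i$ meet while those of the $B_j$ do not, whereas on $S\setminus X$ it is reversed. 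Feeding the restricted families into the $3$-wise pseudohalfplane Helly of Lemma \ref{lem:weakhellypshp} and using the linear order of the vertices to locate where the intersections break off should force four of the six hyperedges to have no common vertex.

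I expect this boundary bookkeeping between the two sides --- essentially showing that a configuration of type ``$A$ capped inside $X$, $B$ capped outside $X$'' cannot be $4$-wise intersecting --- to be the crux of the argument, since it is precisely the place where the symmetric difference with $X$ fails to commute with intersection. Conceptually this is also the reason the Helly number rises from $3$ for pseudohalfplanes to $4$ for pseudohemispheres: it mirrors the classical fact that on the sphere the relevant Radon number is one larger than in the plane. An alternative route, correspondingly, is to first establish a Radon-type partition theorem for pseudohemisphere hypergraphs and then invoke the classical implication that a Radon number of $5$ yields a Helly number of $4$; but within the tools available here the split-elimination reduction to Lemma \ref{lem:weakhellypshp} is the more direct plan.
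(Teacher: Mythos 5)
Your reduction framework (Move~1/Move~2) is valid as a sufficient criterion, but the statement you yourself isolate as the crux --- the no-split claim --- is not just unproven in your sketch, it is false, so the approach cannot be completed. What you overlook is that the witness pair $(\FF,X)$ of a pseudohemisphere hypergraph is far from unique, and the extension demanded by the lemma is not required to be compatible with the witness you fix at the outset; hence nothing forces the $4$-wise hypothesis to exclude a split relative to an arbitrary given witness. Concretely, take $S=\{1,\dots,6\}$ and the six sets $G_1=\{4\}$, $G_2=\{5\}$, $G_3=\{4,5,6\}$, $G_4=\{2,4,5,6\}$, $G_5=\{1,4,5,6\}$, $G_6=\{3,4,5,6\}$; these are exactly the traces of six halfplanes on the points $p_1=(-11,-4)$, $p_2=(-11,4)$, $p_3=(-4,0)$, $p_4=(7,-1)$, $p_5=(7,1)$, $p_6=(5,0)$, so they form a pseudohalfplane hypergraph. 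Now twist by $X=\{1,2,3\}$: the resulting pseudohemisphere hypergraph $\HH=\{G_i\Delta X\}$ consists of $A_1=\{1,2,3,4\}$, $A_2=\{1,2,3,5\}$, $A_3=S$, $B_1=\{1,3,4,5,6\}$, $B_2=\{2,3,4,5,6\}$, $B_3=\{1,2,4,5,6\}$. Every $4$-tuple of these six hyperedges has a common vertex: $A_1\cap A_2\cap A_3=\{1,2,3\}$ meets each $B_j$, $B_1\cap B_2\cap B_3=\{4,5,6\}$ meets each $A_i$, and each mixed tuple $A_i\cap A_{i'}\cap B_j\cap B_{j'}$ contains one of $1,2,3$ (all fifteen tuples check out). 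Yet this is precisely a split: $A_1\cap A_2\cap A_3=X$ and $B_1\cap B_2\cap B_3=S\setminus X$, and there is no four-element subfamily with empty intersection, so the ``upgrade'' you hoped for is impossible. Correspondingly, both of your moves are dead: Move~1 fails already for a pair, since $G_1\cap G_2=\{4\}\cap\{5\}=\emptyset$, and Move~2 fails since $\bar G_4\cap\bar G_5\cap\bar G_6=\{1,3\}\cap\{2,3\}\cap\{1,2\}=\emptyset$.

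The lemma is of course true for this $\HH$ --- for instance $\HH$ is also a pseudohemisphere hypergraph with respect to the trivial twist $X'=\emptyset$ (it is even a pseudohalfplane hypergraph outright), every triple of $\HH$ intersects, and Lemma~\ref{lem:weakhellypshp} then applies directly. This pinpoints the genuinely missing idea: availability of your two moves depends on the witness, a badly chosen witness can kill both, and so any proof along your lines must contain a mechanism for replacing or normalizing $(\FF,X)$ before a Helly statement for pseudohalfplanes can be invoked; your plan never addresses the witness at all. Your fallback sketch for the crux (restricting to $\HH[X]$ and $\HH[S\setminus X]$ and ``feeding'' them into Lemma~\ref{lem:weakhellypshp}) cannot repair this either, since those restrictions discard exactly the interaction between the two sides of $X$ that the mixed $2{+}2$ tuples govern --- and in the example above all those mixed tuples intersect. (For what it is worth, the paper itself gives no proof to compare against: Lemma~\ref{lem:weakhellypshs} is quoted from \cite{abafree}.)
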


In \cite{abafree} it is not said, so we note here that these two statements are best possible. In Section \ref{sec:constr} we give the examples that show this.

Considering if Lemma \ref{lem:weakhellypshp} has a dual, dual Helly theorems are meaninglessly true for pseudohalfplane hypergraphs as we can always add a new hyperedge containing all vertices and the hypergraph remains to be a pseudohalfplane hypergraph. For pseudohemisphere hypergraphs this is not true anymore. In this case, using that the dual of a pseudohemisphere hypergraph is also a pseudohemisphere hypergraph \cite{abafree}, Theorem \ref{thm:weakhellypshsconvex} (stated later) will imply the following:

\begin{thm}[Dual Discrete Helly theorem for pseudohemispheres, $4\rightarrow +1$]\label{thm:dualweakpshs}
	Given a pseudohemisphere hypergraph $\HH$ on vertex set $S$ and a subset $S'\subseteq S$ of its vertices such that every $4$-tuple of the vertices of $S'$ is contained in some hyperedge, then we can add a hyperedge containing all vertices of $S'$ (so that $\HH$ together with this new hyperedge is a pseudohemisphere hypergraph).
\end{thm}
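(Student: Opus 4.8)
The plan is to prove the statement by dualizing and then invoking the primal Helly theorem for pseudohemispheres (Lemma~\ref{lem:weakhellypshs}), exactly as the preceding paragraph suggests. Write $\HH=\{H_1,\dots,H_m\}$ and let $\HH^*$ denote its dual: the vertex set of $\HH^*$ is $\{H_1,\dots,H_m\}$, and for each $v\in S$ it has the hyperedge $D_v=\{H_i : v\in H_i\}$. Since the dual of a pseudohemisphere hypergraph is again a pseudohemisphere hypergraph \cite{abafree}, $\HH^*$ is a pseudohemisphere hypergraph, witnessed by some ABA-free family and set, say $(\FF^*,X^*)$, on $\{H_1,\dots,H_m\}$.

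First I would translate the hypothesis into the dual. Consider the subfamily $\mathcal{G}=\{D_v : v\in S'\}$ of $\HH^*$ on the same vertex set. Because the hyperedges of a pseudohemisphere hypergraph are allowed to be an \emph{arbitrary} subset of $\{F\Delta X^*,\bar F\Delta X^* : F\in\FF^*\}$ (Definition~\ref{def:pshemi}), every subfamily of $\HH^*$ is again a pseudohemisphere hypergraph with the very same witness $(\FF^*,X^*)$; in particular $\mathcal{G}$ is one. Moreover, the assumption that every $4$-tuple of vertices of $S'$ is contained in some hyperedge of $\HH$ says precisely that for any $v_1,v_2,v_3,v_4\in S'$ there is an $H_i$ with $H_i\in D_{v_1}\cap D_{v_2}\cap D_{v_3}\cap D_{v_4}$, that is, every $4$-tuple of hyperedges of $\mathcal{G}$ has a common vertex.

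Next I would apply Lemma~\ref{lem:weakhellypshs} to $\mathcal{G}$. It yields an extension of $\mathcal{G}$ by a new vertex $w$ lying in every hyperedge of the extension, i.e.\ a pseudohemisphere hypergraph on $\{H_1,\dots,H_m,w\}$ whose hyperedges are $D_v\cup\{w\}$ for $v\in S'$. This new vertex $w$ is meant to become the hyperedge we add to $\HH$: dualizing back, a vertex of $\HH^*$ corresponds to a hyperedge of $\HH$, and the fact that $w$ lies in $D_v$ for every $v\in S'$ translates exactly into the new hyperedge containing every vertex of $S'$.

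The main obstacle, and the step I would treat most carefully, is that Lemma~\ref{lem:weakhellypshs} extends only the subfamily $\mathcal{G}$, whereas I need to extend the full dual $\HH^*$, so that dualizing back returns $\HH$ together with \emph{one} new hyperedge, rather than merely the induced subhypergraph $\HH[S']$ together with a new hyperedge. (One genuinely cannot apply the lemma to all of $\HH^*$, since a vertex $v\in S\setminus S'$ lying in few or no hyperedges would violate the common-vertex hypothesis.) The way around this is to use that $\mathcal{G}$ and $\HH^*$ share the single witness $(\FF^*,X^*)$: the extension produced by Lemma~\ref{lem:weakhellypshs} is obtained by extending this witness by $w$, and the same extended witness then places $w$ consistently in all remaining hyperedges $D_v$ $(v\in S\setminus S')$ as well, for which no containment is required. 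Hence the same insertion of $w$ simultaneously extends $\HH^*$, and dualizing this extension gives a pseudohemisphere hypergraph on $S$ consisting of $\HH$ together with a single new hyperedge $H'$ with $S'\subseteq H'$, as required. Equivalently, one first obtains that $\HH[S']$ together with the full hyperedge $S'$ is a pseudohemisphere hypergraph, and then lifts this added hyperedge from $S'$ back to $S$ via the shared witness; verifying that this lift is legitimate is exactly the technical heart of the argument.
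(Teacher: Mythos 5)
Your overall route is the right one, and it is in fact the paper's route unrolled: the paper proves this theorem in one line by applying Theorem~\ref{thm:weakhellypshsconvex} to the dual hypergraph with $\CC=\{D_v : v\in S'\}$ (each hyperedge is trivially a convex set), and the proof of Theorem~\ref{thm:weakhellypshsconvex} is exactly your plan --- apply Lemma~\ref{lem:weakhellypshs} to the subfamily and then extend the new vertex to the rest of the hypergraph. Your dualization bookkeeping (subfamilies stay pseudohemisphere hypergraphs, the $4$-tuple hypothesis dualizes to the common-vertex hypothesis, a new vertex of the dual dualizes back to a new hyperedge of $\HH$) is all correct.

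However, the step you yourself flag as ``the technical heart'' contains a genuine gap, and the mechanism you offer to close it does not work. Lemma~\ref{lem:weakhellypshs}, read through Definition~\ref{def:ext}, only produces an extension of the hyperedges of the hypergraph it is applied to: the topsets and bottomsets of $\mathcal{G}$ are put in bijection with those of the extension. It does \emph{not} promise to extend your chosen witness $(\FF^*,X^*)$; the members of $\FF^*$ that witness hyperedges of $\HH^*\setminus\mathcal{G}$ (the sets $D_v$ with $v\in S\setminus S'$) are simply invisible to the lemma, so the ``extended witness'' gives no placement of $w$ in them whatsoever. Deciding, for each remaining hyperedge, whether $w$ should be inside or outside it so that the \emph{whole} family remains a pseudohemisphere hypergraph is a substantive claim: it is precisely Lemma~\ref{lem:extension} (Extension with a vertex), adapted to pseudohemisphere hypergraphs, and the paper proves it by a nontrivial induction with a four-case analysis (Figures~\ref{fig:extension}--\ref{fig:extension4}); a priori both choices for some hyperedge could create a forbidden ABA pattern, and ruling that out is where all the work lies. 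To repair your proof, replace the ``shared witness'' paragraph by an invocation of the pseudohemisphere version of Lemma~\ref{lem:extension} (applied to $\mathcal{G}\subseteq\HH^*$ and the extension $\mathcal{G}^+$), or shortcut the whole argument by citing Theorem~\ref{thm:weakhellypshsconvex} directly, as the paper does.
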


Recently in \cite{jjr} they proved discrete Helly-type theorems which can be formulated in terms of halfplane hypergraphs, their results were extended by the author to pseudohalfplane hypergraphs \cite{kbdiscretehelly}. In these results while we cannot hit all hyperedges with one vertex, on the other hand we can choose the vertex from the original vertex set.\footnote{To distinguish from the rest of the discrete Helly theorems, we will use the word \emph{strong} to refer to the fact that the vertex found is in the original vertex set. This is somewhat similar to the difference between (strong) $\epsilon$-nets and weak $\epsilon$-nets} Instead of listing all these results, we mention just one which is closest to Lemma \ref{lem:weakhellypshp}:

\begin{thm}[Primal Strong Discrete Helly theorem for pseudohalfplanes, $3\rightarrow 2$]\label{thm:primalpshp32}\cite{kbdiscretehelly}
	Given a pseudohalfplane hypergraph $\HH$ such that every triple of hyperedges has a common vertex, there exists a set of at most $2$ vertices that hits every hyperedge of $\HH$.
\end{thm}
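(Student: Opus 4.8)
The plan is to separate the two ABA-free families that make up $\HH$, namely the topsets $\TT=\HH\cap\FF$ and the bottomsets $\BB=\HH\cap\bar{\FF}$, and to find a single vertex meeting every topset together with a single vertex meeting every bottomset; the union of these two vertices is then the desired transversal of size at most $2$. Since $\bar{\FF}$ is ABA-free whenever $\FF$ is, the two tasks are symmetric and it suffices to treat $\TT$. The hypothesis in particular guarantees that every triple of topsets has a common vertex, so the whole theorem reduces to the following single-family claim: if $\TT$ is ABA-free and every triple of its members has a common vertex, then all members of $\TT$ share a common vertex.

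To attack this claim I would exploit the extremal structure provided by Lemma \ref{lem:unskippable}. Each topset contains a topvertex, and in fact more is true: the topvertices contained in a fixed topset $T$ form a contiguous block in the left-to-right order of the topvertices. Indeed, if two topvertices lie in $T$, then every topvertex strictly between them lies strictly between $\min(T)$ and $\max(T)$, hence cannot be skipped by $T$ and so belongs to $T$. Thus, restricted to the set of topvertices, the family $\TT$ becomes a family of genuine intervals on a line, where Helly's number is $2$: pairwise intersecting intervals have a common point. If I can show that the topvertex-intervals of the members of $\TT$ pairwise intersect, then they share a common topvertex $t$, and $t$ meets every topset; the analogous bottomvertex $b$ then completes a $2$-element transversal.

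The main obstacle is precisely the passage from a shared \emph{vertex} to a shared \emph{topvertex}. Two topsets can meet only in a skippable vertex while their topvertex-intervals are disjoint -- for instance, two upward halfplanes can contain a common interior point of the underlying point set without sharing any vertex of its upper hull -- so pairwise intersection alone does not yield pairwise intersecting intervals. This is exactly where the full force of the \emph{triple} hypothesis, rather than mere pairwise intersection, has to be spent, in combination with the forbidden ABA-pattern: given two topsets whose topvertex-intervals are disjoint, I would feed them, together with a third suitably chosen topset supplied by the triple condition, into the ABA-free definition to force a forbidden configuration (a vertex belonging to exactly one member squeezed between two vertices of another member), a contradiction. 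I expect ruling out these skippable-only intersections to be the crux of the argument; once the intervals are shown to intersect pairwise, one-dimensional Helly finishes each family and $\{t,b\}$ pierces all of $\HH$.

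An alternative to the interval reduction is a direct \emph{leftmost right-endpoint} argument: let $w$ be the vertex realizing $\min_{T\in\TT}\max(T)$ and argue, again via the ABA-free forbidden pattern, that $w$ lies in every member of $\TT$. This mirrors the classical one-dimensional Helly proof, but it too must invoke the third member of each triple in order to control the skippable vertices. Either route localizes the entire difficulty in the same place, namely in handling intersections that occur only at non-extremal vertices.
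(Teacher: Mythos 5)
Your reduction is to a false statement, so the plan cannot be completed. The single-family claim on which everything rests --- ``if $\TT$ is ABA-free and every triple of its members has a common vertex, then all members of $\TT$ share a common vertex'' --- fails. Take four vertices $p_1<p_2<p_3<p_4$ and the four hyperedges $T_i=\{p_1,p_2,p_3,p_4\}\setminus\{p_i\}$ for $i=1,\dots,4$. This family is ABA-free: every set-difference $T_i\setminus T_j=\{p_j\}$ (for $i\ne j$) is a singleton, so no ABA-pattern can occur. (Geometrically it is realized by four upward halfplanes whose boundary lines all appear on the upper envelope, with one point placed just below each line's portion of the envelope and above the other three lines.) Every triple of hyperedges has a common vertex, since $T_i\cap T_j\cap T_k=\{p_l\}$ whenever $\{i,j,k,l\}=\{1,2,3,4\}$, yet $T_1\cap T_2\cap T_3\cap T_4=\emptyset$. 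So a pseudohalfplane hypergraph consisting of topsets only (here $\BB=\emptyset$) and satisfying the triple hypothesis can itself require two piercing vertices: the ``$2$'' in the theorem is not ``$1+1$'' along the top/bottom split, and any correct proof must let the two vertices share the work inside each of the two ABA-free halves. The same example also shows that the step you yourself flagged as the crux is not merely unproven but false: the unskippable vertices of this family are exactly $p_1$ and $p_4$, so the topvertex traces of $T_4$ and $T_1$ are the disjoint intervals $\{p_1\}$ and $\{p_4\}$ even though every triple of hyperedges intersects; hence the triple hypothesis does not force pairwise intersecting topvertex intervals. Your alternative ``leftmost right-endpoint'' route aims at a single vertex piercing all of $\TT$ and is refuted by the same example. (For this particular family one could re-declare $T_4$ a bottomset, i.e.\ put $\{p_4\}$ into $\FF$, after which each class has a common vertex; but your argument works with a fixed given split and offers no mechanism for finding such a re-splitting, so the reduction as stated collapses.)

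For what it is worth, the present paper does not prove Theorem \ref{thm:primalpshp32} at all; it quotes it from \cite{kbdiscretehelly}. The lemmas imported from that paper indicate how the actual argument sidesteps the trap above: top- and bottomvertices are merged into a single \emph{circular} order on the extremal vertices, every hyperedge --- topset or bottomset --- traces an interval of this circular order (Lemma \ref{lem:hullinterval}), and the two piercing vertices come from a Helly-type argument for circular arcs in which the two kinds of hyperedges are treated jointly. Your correct ingredients (each topset contains a topvertex by Lemma \ref{lem:unskippable}, and traces an interval on the topvertices by Observation \ref{obs:topintop}) do appear there, but the linear, one-family-at-a-time interval Helly has to be replaced by this joint circular one.
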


\subsection{Helly theorems for pseudoconvex sets}\label{sec:convex}

We are able to generalize Helly's Theorem for pseudoconvex sets:

\begin{thm}[Discrete Helly theorem for pseudoconvex sets, $3\rightarrow +1$]\label{thm:weakhellypsconvex}
	Given a pseudohalfplane hypergraph $\HH$ and a subfamily  $\CC$ of its convex sets such that every triple of convex sets from $\CC$ has a common vertex, then we can extend $\HH$ to a pseudohalfplane hypergraph by a vertex contained in every convex set which is an extension of a set from $\CC$.
\end{thm}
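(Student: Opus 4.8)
The plan is to reduce the statement to a question about the \emph{extremal vertices} and to handle the two coordinates of the sought point separately. The backbone of the whole argument is a structural tool that I expect to be the key lemma of Section~\ref{sec:extremal}: in an ABA-free $\FF$, every topset meets the topvertices in one contiguous block, and every bottomset meets the bottomvertices in one contiguous block. This is immediate from the definition of unskippable vertices, since if a topset $T\in\FF$ contained topvertices $u<w<u'$ but not $w$, then $\min(T)\le u<w<u'\le \max(T)$ with $w\notin T$ would mean $T$ skips the unskippable vertex $w$, a contradiction (the bottomset case is the same statement applied inside $\bar\FF$). Writing a convex set $C\in\CC$ as $C=T_C\cap B_C$ with $T_C$ the intersection of its topsets and $B_C$ the intersection of its bottomsets, the trace of $T_C$ on the topvertices and the trace of $B_C$ on the bottomvertices are therefore each intervals (intersections of intervals). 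This is what makes "extending $\HH$ by a vertex $v$'' concrete: by Definition~\ref{def:ext} it amounts to inserting $v$ at some position of the ordered vertex set and deciding, for each hyperedge, whether $v$ joins it, subject only to the witnessing family staying ABA-free; and by the interval tool such a membership decision is admissible for a topset exactly when the insertion position falls inside its topvertex-interval, and similarly for bottomsets.

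First I would fix the order-position of $v$. Assign to each $C\in\CC$ the interval $[\min C,\max C]$ of order-positions it spans. Since any two (indeed any three) members of $\CC$ share a genuine vertex $w$, and $w$ lies between the minimum and maximum of each set containing it, these positional intervals pairwise intersect, so one-dimensional Helly yields a common position $p^\ast$, which I take as the order-position of $v$. Note that only pairwise intersection is needed here, so the full strength of the triple hypothesis is still unspent. It then remains to choose the membership of $v$ so that it lands inside $T_C^+\cap B_C^+$ for every $C$; by the interval tool this is the requirement that $p^\ast$ lie in every relevant topvertex-interval (an "upper'' constraint) and every relevant bottomvertex-interval (a "lower'' constraint) simultaneously, together with compatibility of these two families of constraints at $p^\ast$.

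The hard part will be precisely this compatibility of the upper constraints from topsets with the lower constraints from bottomsets at $p^\ast$: a priori nothing prevents some $C_i$ from sitting entirely "above'' some $C_j$ over $p^\ast$ even though every pair of sets shares a vertex elsewhere, and this is exactly the obstruction that pairwise intersection cannot rule out but triple intersection can. To overcome it I would not take an arbitrary common position but a careful extremal choice of $p^\ast$ (say the leftmost feasible one), and then argue by contradiction: a clash would give two sets $C_i,C_j$ whose positional intervals both contain $p^\ast$ but whose top/bottom traces force $v$ above $C_i$ and below $C_j$ at the same time. Tracing this clash back through the interval tool to a topvertex and a bottomvertex straddling $p^\ast$, I would exhibit a third member of $\CC$ for which the common vertex promised by the triple hypothesis cannot exist, a contradiction; this is the step where the hypothesis on triples rather than pairs is genuinely consumed, and it is the main obstacle of the proof. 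Once compatibility is established, I would define $\HH^+$ by inserting $v$ at $p^\ast$ into the witnessing ABA-free family $\FF$ with exactly the memberships found, check via the interval tool that the enlarged family is still ABA-free (so that $\HH^+$ is a pseudohalfplane hypergraph in the sense of Definition~\ref{def:ext}), and read off that $v\in C^+=T_C^+\cap B_C^+$ for every $C\in\CC$, which completes the proof. I also note that a naive shortcut—feeding all hyperedges appearing in the definitions of the $C_i$ into the hyperedge version Lemma~\ref{lem:weakhellypshp}—does not work, since triples of those hyperedges need not intersect; this is why the extremal-vertex reduction above, rather than a direct appeal to Lemma~\ref{lem:weakhellypshp}, seems unavoidable.
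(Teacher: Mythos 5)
Your proposal contains a genuine gap, and, more importantly, it dismisses the correct argument on false grounds. The ``naive shortcut'' you reject at the end is in fact exactly the paper's proof, and your reason for rejecting it is wrong: triples of the defining hyperedges \emph{do} intersect. If $H_1,H_2,H_3$ are hyperedges used in the definitions of convex sets $C_1,C_2,C_3\in\CC$ respectively (not necessarily distinct), then $H_i\supseteq C_i$ for each $i$, since every hyperedge whose intersection forms $C_i$ contains $C_i$. The hypothesis gives a vertex $w\in C_1\cap C_2\cap C_3$, and this $w$ lies in $H_1\cap H_2\cap H_3$. So the family $\HH_\CC=\bigcup_{C\in\CC}\HH_C$ (where $\bigcap\HH_C=C$) satisfies the hypothesis of Lemma~\ref{lem:weakhellypshp}, which yields an extension $\HH_\CC^+$ by a vertex $v$ lying in every hyperedge of $\HH_\CC^+$, hence in every extended convex set $C^+$. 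The only remaining issue is that the hyperedges of $\HH\setminus\HH_\CC$ must also be extended to $v$ consistently, and that is precisely what Lemma~\ref{lem:extension} (extension with a vertex) provides. This two-step reduction is the whole proof in the paper; the containment $C_i\subseteq H_i$ is the one observation you missed.

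Your alternative route, as written, does not stand on its own. Its central step --- the ``compatibility'' of the upper constraints coming from topsets with the lower constraints coming from bottomsets at the chosen position $p^\ast$ --- is exactly where the triple hypothesis must be consumed, and you leave it as a sketch (``I would exhibit a third member of $\CC$\dots''), with no indication of which third member or how the contradiction materializes. This is not a routine verification: it is essentially a re-proof of Lemma~\ref{lem:weakhellypshp} from scratch, which in \cite{abafree} requires real work. In addition, your reduction of admissibility to a per-hyperedge interval condition (``a membership decision is admissible for a topset exactly when the insertion position falls inside its topvertex-interval'') is unjustified: ABA-freeness of the extended family is a condition on \emph{pairs} of hyperedges whose memberships of $v$ are decided jointly, so whether adding $v$ to one topset is legal depends on the decisions made for the other hyperedges, not only on the position of $v$ relative to that topset's extremal vertices. (This joint nature of the constraints is visible in the case analysis of the paper's proof of Lemma~\ref{lem:extension}.) So even granting the interval observation (which is Observation~\ref{obs:topintop} of the paper), the proposal's scaffolding does not reduce the problem to one-dimensional Helly in the way claimed.
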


Similarly, we can show such a result for pseudohemispheres.

\begin{thm}[Discrete Helly theorem for convex sets of pseudohemispheres,  $4\rightarrow +1$]\label{thm:weakhellypshsconvex}
	Given a pseudohemisphere hypergraph $\HH$ and a subfamily $\CC$ of its convex sets such that every $4$-tuple of convex sets from $\CC$ has a common vertex, then we can extend $\HH$ to a pseudohemisphere hypergraph by a vertex contained in every convex set which is an extension of a set from $\CC$.
\end{thm}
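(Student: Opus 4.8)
The plan is to prove this by following the proof of the pseudohalfplane statement, Theorem \ref{thm:weakhellypsconvex}, step for step, everywhere replacing the primal Helly engine, Lemma \ref{lem:weakhellypshp}, by its pseudohemisphere counterpart, Lemma \ref{lem:weakhellypshs}, and the Helly number $3$ by $4$; the extra unit is exactly what the ``spherical'' hemisphere setting costs over the ``affine'' halfplane one. First I would fix an ABA-free $\FF$ and a set $X\subseteq S$ witnessing that $\HH$ is a pseudohemisphere hypergraph, so that every hyperedge has the form $F\Delta X$ or $\bar F\Delta X$ with $F\in\FF$ (Definition \ref{def:pshemi}). Using this, each convex set $C\in\CC$ is written as $C=\bigcap_i H_i$ with each $H_i$ of one of these two forms, and I would split $C$ into its ``top'' part (the intersection of the $F\Delta X$ pieces) and its ``bottom'' part (the intersection of the $\bar F\Delta X$ pieces), so that $C$ is encoded by two boundary chains of extremal vertices as developed in Section \ref{sec:extremal}.

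The heart of the argument is to pass from the hypothesis about $4$-tuples of \emph{convex sets} to a statement about \emph{hyperedges}, where Lemma \ref{lem:weakhellypshs} applies directly. The naive reduction fails: if $C=\bigcap_i H_i$, then a common vertex of four convex sets does \emph{not} force a common vertex of four of their defining hyperedges, so one cannot simply feed the family $\bigcup_{C\in\CC}\{H : C\subseteq H\in\HH\}$ into the primal lemma. Instead I would exploit the extremal-vertex description: only the top- and bottom-boundary chains of a convex set carry its intersection information, and two convex sets meet precisely when these chains cross in a controlled way. The aim is to organize the boundary-chain data of the members of $\CC$ into a single \emph{pseudohemisphere} hypergraph $\HH^{*}$ on $S$ in which the sets of $\CC$ (or suitable hyperedges pinning them down) appear as genuine hyperedges, while the $4$-tuple intersection pattern of $\CC$ is preserved. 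Applying Lemma \ref{lem:weakhellypshs} to $\CC$ viewed inside $\HH^{*}$ then yields an extension by a single vertex $v$ lying in all of them; transferring this extension of the shared ABA-free structure $\FF$ back to $\HH$ (and deciding whether $v\in X$) places $v$ in the extension $C^{+}$ of every $C\in\CC$, as required by Definition \ref{def:ext}.

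I expect the main obstacle to be exactly this structural reduction, namely showing that the boundary-chain data of the convex sets in $\CC$ can be realized by one pseudohemisphere hypergraph whose hyperedges meet in $4$-tuples whenever the original convex sets do. This is the step where genuine convexity, rather than a mere union of defining pseudohalfplanes, must be used, and it is the hemisphere analogue of the corresponding step for Theorem \ref{thm:weakhellypsconvex}; the increase of the Helly number from $3$ to $4$ enters precisely because the hemisphere crossing structure permits one additional ``side.'' The remaining ingredients are routine: when $\CC$ already has a common vertex $w$ (in particular whenever $|\CC|\le 4$, by hypothesis), one extends by a clone of $w$, which keeps $\FF$ ABA-free and puts the new vertex into exactly the hyperedges containing $w$, hence into every $C^{+}$; and the bookkeeping that the extension of $\FF$ respects complement-pairs, so that $\HH^{+}$ is again a pseudohemisphere hypergraph, is handled by Definition \ref{def:ext} together with the closure properties of ABA-free families recorded in Section \ref{sec:basicdef}.
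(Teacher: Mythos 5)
Your proposal contains a genuine gap, and it is of an unusual kind: you correctly identify the paper's strategy (mimic the proof of Theorem \ref{thm:weakhellypsconvex} with Lemma \ref{lem:weakhellypshs} in place of Lemma \ref{lem:weakhellypshp}, plus a pseudohemisphere analogue of the extension Lemma \ref{lem:extension}), but then you wrongly declare that the central reduction of that strategy fails, and you replace it with a vague construction that you never carry out. Your claim that ``a common vertex of four convex sets does not force a common vertex of four of their defining hyperedges'' is false, and the reason is one line: if $C=\bigcap\{H: H\in \HH_C\}$ then $C\subseteq H$ for \emph{every} $H\in\HH_C$. So take any four hyperedges $H_1,\dots,H_4$ of $\HH_\CC=\bigcup\{\HH_C : C\in\CC\}$, say $H_j\in\HH_{C_{i_j}}$; the (at most four) convex sets $C_{i_1},\dots,C_{i_4}$ have a common vertex $x$ by hypothesis, and $x\in C_{i_j}\subseteq H_j$ for each $j$. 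Hence every $4$-tuple of hyperedges of $\HH_\CC$ has a common vertex, $\HH_\CC\subseteq\HH$ is itself a pseudohemisphere hypergraph, and Lemma \ref{lem:weakhellypshs} applies directly to produce a new vertex $v$ lying in all hyperedges of $\HH_\CC$, hence in the extension of every $C\in\CC$ (by Definition \ref{def:ext} the extension $C^+$ is the intersection of the extensions of the hyperedges of $\HH_C$, all of which contain $v$). A pseudohemisphere analogue of Lemma \ref{lem:extension} then extends the remaining hyperedges of $\HH$ to $v$. This ``naive'' reduction is exactly the paper's (sketched) proof.

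The consequence of your misdiagnosis is that the heart of your proposal is not a proof: the hypergraph $\HH^{*}$ built from ``boundary chains'' of the convex sets is never defined (and the extremal-vertex machinery of Section \ref{sec:extremal} that you invoke for it is developed for pseudohalfplane, not pseudohemisphere, hypergraphs), and you yourself flag this structural reduction as the expected main obstacle without resolving it. So the obstacle you set out to overcome does not exist, and the substitute argument you propose in its place is missing. The remaining ingredients you list (the pseudohemisphere version of the extension lemma, and the bookkeeping with $\FF$, $X$ and complement-pairs) are fine and agree with the paper.
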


These generalize the previously mentioned discrete Helly results Lemma \ref{lem:weakhellypshp} and Lemma \ref{lem:weakhellypshs} from \cite{abafree} in two ways. As a first step, instead of $\HH$ we can consider a subfamily of $\HH$ (similar to Theorem \ref{thm:dualweakpshs}, where we considered a subset $S'$ of $S$ instead of the whole $S$). As a second step, this family is actually not required to be a subfamily of $\HH$, instead it has to be only a subfamily $\CC$ of the convex sets of $\HH$. 

These two statements are best possible, in Section \ref{sec:constr} we give the examples that show this.

\subsection{Carath\'eodory's, Kirchberger's and the Separation theorem}

We can also generalize Carath\'eodory's Theorem but here we have to be more careful. The obvious way would be to claim that given a pseudohalfplane hypergraph $\HH$ on vertex set $S$ and a subset $S'\subseteq S$ of its vertices and a vertex $v\in S$, if $v\in Conv(S')$ then there exists an $S''\subseteq S'$, $|S''|\le 3$ such that already $v\in Conv(S'')$. For this statement however there is a simple counterexample. Let the hyperedges be $S$ and all the subsets of $S$ of size $|S|-2$ that do not contain $v$. It is easy to see that this is an ABA-free hypergraph (for an arbitrary ordering of the vertices). One can verify this by checking the definition of ABA-freeness. Alternatively one can easily realize this hypergraph by points in the plane and upwards halfplanes, see Figure \ref{fig:cara}, which implies that it is ABA-free (for the implication see Section \ref{sec:geomconseq}). On the other hand while for $S'=S\setminus\{v\}$ we have $v\in S=Conv(S')$, for any proper subset $S''$ of $S'$ we have that $v\notin S''=Conv(S'')$, thus a statement like we hoped for cannot be true even if we replace $3$ with some other constant.\footnote{Notice that if we also take the complements of the sets we get such an example which is a pseudohalfplane hypergraph $\HH'$ in which the convex hull of every subset of size two contains no third vertex, that is, the vertices are in \emph{general position} in some sense.}

\begin{figure}
	\begin{center}
		\includegraphics[height=4cm]{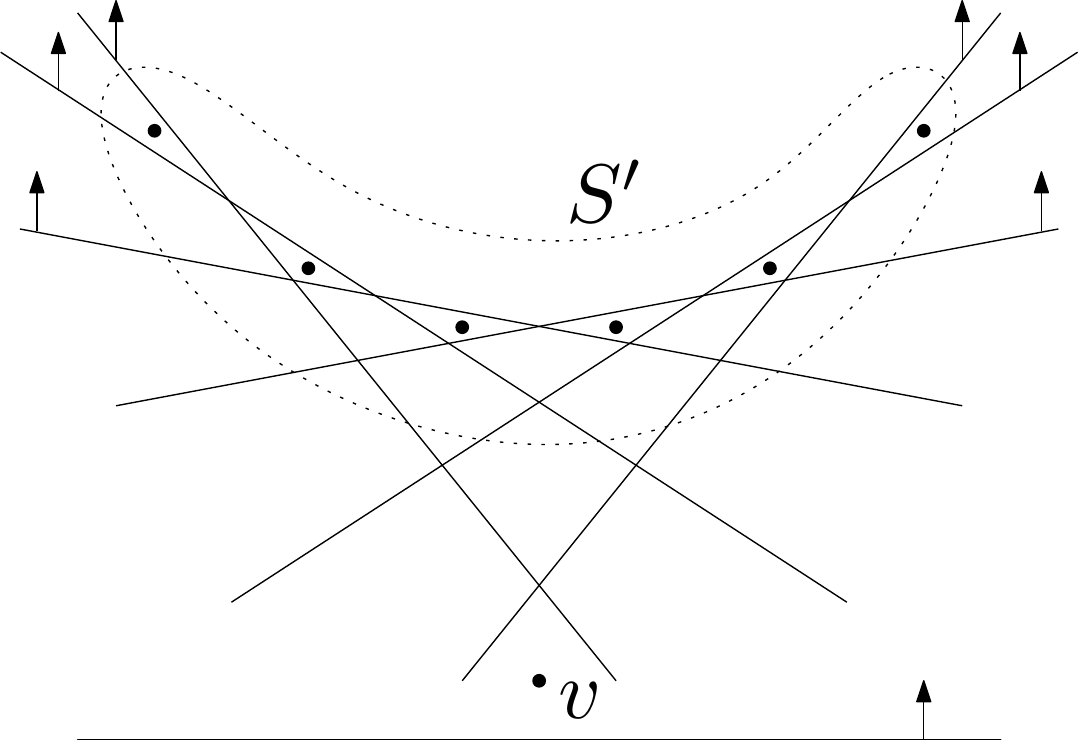}
		\caption{Realization with upwards halfplanes a pseudohalfplane hypergraph in which $v\in Conv(S')$ yet $v$ is not in the convex hull of any proper subset of the vertices of $S'$.}
		\label{fig:cara}
	\end{center}
\end{figure}

It turns out that the useful stronger way to define $v$ `being inside the convex hull' of $S'$ is the following:
\begin{defi}
	Given a pseudohalfplane hypergraph $\HH$ on vertex set $S$. If for some vertex $v$ and subset $S'$ of the vertices of $\HH$, in any extension $\HH'$ of $\HH$ with additional hyperedges it is still true that $v\in Conv(S')$ in $\HH'$ (i.e., when the convex hull is defined with respect to $\HH'$) then we say that \emph{$v$ is strongly inside the convex hull of $S'$}.
\end{defi}  
It is easy to see that when $\HH$ is defined by points and halfplanes in the plane then a point is strongly inside the convex hull of a set of points $S'$ if and only it is in the interior (defined as the largest open subset) of the convex hull of $S'$. Note that trivially if $v$ is strongly inside the convex hull of $S'$ then also $v\in Conv(S')$ in $\HH$. 
The following claim gives an intuitive equivalent definition for which there is no need to consider extensions of $\HH$:

\begin{claim}\label{claim:strongconvex}
	Given a pseudohalfplane hypergraph $\HH$ on vertex set $S$, $v\in S$ is strongly inside the convex hull of $S'\subseteq S$ if and only if $v$ is not an extremal vertex of $\HH[S'\cup \{v\}]$.
\end{claim}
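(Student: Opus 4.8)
The plan is to reduce the biconditional to whether a single \emph{separating} hyperedge can be adjoined to $\HH$, and then to localise that to the set $T:=S'\cup\{v\}$.

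First I would unwind ``strongly inside''. A pseudohalfplane hypergraph carries its witnessing ABA-free family $\FF$, and by Definition~\ref{def:ext} an extension keeps the existing topsets and bottomsets in their roles, i.e.\ it is built from an ABA-free $\FF'\supseteq\FF$. Hence $v$ is \emph{not} strongly inside $Conv(S')$ exactly when some set $H$ with $S'\subseteq H$ and $v\notin H$ can be appended to $\HH$, i.e.\ when $\FF\cup\{H\}$ is ABA-free (appending $H$ as a topset) or $\FF\cup\{\bar H\}$ is ABA-free (appending $H$ as a bottomset). The task is therefore to show that such an $H$ exists iff $v$ is extremal in $\HH[T]$.

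Second I would settle the analogous statement on the ground set $T$, where it is a direct unfolding of Definition~\ref{def:ABA}. On $T$ one separates $v$ from $S'$ by adjoining to $\FF[T]$ either $S'=T\setminus\{v\}$ (as a topset) or $\{v\}$ (as a topset, making $S'$ a bottomset). Inspecting the pairs $\{S',\,F\cap T\}$ and $\{\{v\},\,F\cap T\}$ for $F\in\FF$ shows that $\FF[T]\cup\{S'\}$ is ABA-free iff no $F$ with $v\in F$ has points of $S'\setminus F$ on both sides of $v$, i.e.\ iff $v$ is a bottomvertex of $\HH[T]$; and that $\FF[T]\cup\{\{v\}\}$ is ABA-free iff no $F$ has points of $F\cap S'$ on both sides of $v$ (with $v\notin F$), i.e.\ iff $v$ is a topvertex. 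So on $T$ the adjunction is possible iff $v$ is a topvertex or a bottomvertex, i.e.\ extremal.

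Third --- and here I expect the real work --- I would transfer the adjoined set between $T$ and $S$, seeking $G\subseteq S$ with $G\cap T\in\{S',\{v\}\}$ and $\FF\cup\{G\}$ ABA-free. One direction is immediate, since restricting an ABA-free $\FF\cup\{G\}$ to $T$ stays ABA-free, so addability on $S$ forces the localised condition and hence extremality of $v$ in $\HH[T]$. The converse is the crux: given the localised condition I must extend the trace to a full $G$ on $S$ preserving ABA-freeness. The obstacle is that extremality constrains only the points of $S'$ relative to $v$, while ABA-freeness of $\FF\cup\{G\}$ also involves $S\setminus T$, so a careless choice of $G$ off $T$ could create an \emph{ABA} pattern with some $F\in\FF$. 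I would resolve this by inserting the points of $S\setminus T$ one at a time, showing that the membership of each new point in $G$ can be fixed consistently for all $F\in\FF$ simultaneously using the ABA-freeness of $\FF$; alternatively, realising $\FF$ by upper pseudohalfplanes as in \cite{abafree} yields $G$ as one further pseudohalfplane whose values on $S\setminus T$ are automatic. Combining the three steps gives ``$v$ not strongly inside'' $\Leftrightarrow$ ``a separating hyperedge is addable'' $\Leftrightarrow$ ``$v$ extremal in $\HH[T]$'', which is the contrapositive of the claim.
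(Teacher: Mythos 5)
Your proposal is correct and takes essentially the same route as the paper: localize to $T=S'\cup\{v\}$, characterize extremality of $v$ there by the addability of a separating singleton hyperedge, and transfer that separator between $T$ and the full vertex set $S$. The two ingredients you re-derive are precisely the paper's Claim~\ref{claim:singleton} and Lemma~\ref{lem:extensiondual} (Extension with a hyperedge); the paper's proof simply cites both, so the ``crux'' of your Step~3 is already available as a stated lemma and needs neither the point-by-point insertion argument nor the realization alternative (whose claim that the values on $S\setminus T$ are ``automatic'' would itself require justification).
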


With this definition we get a true statement analogous to and implying the planar case of the classical theorem of Steinitz which states that if $X$ is a set in $\RR^d$ and $p$ is in the interior of the convex hull of $X$, then $p$ is in the interior of the convex hull of some subset $Y$ of $X$ of size at most $2d$ (see, e.g., \cite{Eckhoff}):

\begin{claim}[Steinitz's theorem for pseudoconvex sets]\label{claim:pseudosteinitz} 
	Given a pseudohalfplane hypergraph $\HH$ on vertex set $S$ and a subset $S'\subseteq S$ of its vertices and a vertex $v\in S$. If $v$ is strongly inside the convex hull of $S'$ then there exists an $S''\subseteq S'$, $|S''|\le 4$ such that $v$ is strongly inside the convex hull of $S''$.
\end{claim}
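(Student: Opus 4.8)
The plan is to reduce everything to the combinatorial reformulation of ``strongly inside'' supplied by Claim~\ref{claim:strongconvex}, and then to exploit the fact that non-extremality is a \emph{locally witnessed} property: certifying that $v$ fails to be extremal costs only two witnessing vertices on the top side and two on the bottom side, which is precisely where the bound $4=2+2$ comes from.

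First I would invoke Claim~\ref{claim:strongconvex} in the ``only if'' direction: since $v$ is strongly inside the convex hull of $S'$, the vertex $v$ is \emph{not} an extremal vertex of $\HH[S'\cup\{v\}]$, i.e.\ (with respect to the associated ABA-free family $\FF$ of $\HH$) $v$ is neither a topvertex nor a bottomvertex of $\HH[S'\cup\{v\}]$. Unwinding the definitions, ``$v$ is not a topvertex'' means $v$ is skippable in $\FF[S'\cup\{v\}]$, so there is a hyperedge $F_1\in\FF$ whose restriction to $S'\cup\{v\}$ skips $v$; concretely there exist $x,z\in S'$ with $x<v<z$, $x,z\in F_1$ and $v\notin F_1$. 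Dually, ``$v$ is not a bottomvertex'' means $v$ is skippable in $\bar\FF[S'\cup\{v\}]$, which yields $F_2\in\FF$ and $x',z'\in S'$ with $x'<v<z'$, $x',z'\notin F_2$ and $v\in F_2$ (so that $x',z'\in\bar F_2$ while $v\notin\bar F_2$).

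Next I would set $S''=\{x,z,x',z'\}\subseteq S'$, so that $|S''|\le 4$, and prove that $v$ is still not extremal in $\HH[S''\cup\{v\}]$. The key point is that the two certificates persist under restriction to the smaller vertex set $T=S''\cup\{v\}$: the hyperedge $F_1\cap T\in\FF[T]$ still contains $x,z$ and omits $v$ with $x<v<z$, so $v$ remains skippable in $\FF[T]$ and hence is not a topvertex of $\HH[T]$; symmetrically $\bar F_2\cap T$ shows $v$ is skippable in $\bar\FF[T]$ and hence not a bottomvertex. Thus $v$ is not extremal in $\HH[S''\cup\{v\}]$, and applying the ``if'' direction of Claim~\ref{claim:strongconvex} to $S''$ gives that $v$ is strongly inside the convex hull of $S''$, as required.

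The one point that needs care — and the main obstacle — is ensuring that the extremal vertices of the two induced subhypergraphs $\HH[S'\cup\{v\}]$ and $\HH[S''\cup\{v\}]$ are computed with respect to compatible ABA-free families, since a priori the witnessing family of an induced subhypergraph need not be the restriction of $\FF$. I would resolve this by working throughout with the canonical family $\TT\cup\bar\BB$ and checking that restriction commutes with complementation: for a bottomset $B$, the complement of $B\cap T$ taken inside $T$ equals $\bar B\cap T$, whence the canonical family of $\HH[T]$ is exactly $\FF[T]$. With this identification the skippability witnesses above are literally hyperedges of the canonical family of each induced subhypergraph, and the monotonicity of a single skipping witness under deleting vertices other than $x,z,x',z',v$ finishes the argument. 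The constant $4$ is the correct one, matching the planar Steinitz bound $2d=4$, and its optimality would be exhibited separately.
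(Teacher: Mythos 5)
Your proof is correct, but it takes a genuinely different and more economical route than the paper's. The paper fixes $S''=\{t_i,t_{i+1},b_j,b_{j+1}\}$, the topvertices and bottomvertices of $\HH[S'\cup\{v\}]$ immediately flanking $v$; since an arbitrary skipping certificate for $v$ has witnesses that need not lie in that particular four-element set, the paper must invoke Lemma~\ref{lem:conseqtop} (an inductive lemma) to re-anchor the top-side certificate at $t_i,t_{i+1}$ (and symmetrically the bottom-side one at $b_j,b_{j+1}$) so that it survives restriction to $S''\cup\{v\}$; along the way it also verifies $v\in Conv(S'')$ directly via Lemma~\ref{lem:consecutives}. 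You instead let $S''$ \emph{be} the witnesses $\{x,z,x',z'\}$ of the two skipping certificates, which makes persistence under restriction to $S''\cup\{v\}$ immediate and reduces the whole argument to Claim~\ref{claim:strongconvex} plus the definition of skippability — no auxiliary lemmas at all. This realizes the paper's own remark that ``the proof is nearly trivial from the definitions,'' and your bookkeeping about witnessing families (working with the canonical family and noting that restriction commutes with complementation) is consistent with the conventions the paper itself uses implicitly. What the paper's heavier choice buys is extra structure: its $S''$ consists of \emph{extremal} vertices of $\HH[S'\cup\{v\}]$, consecutive around $v$ in the circular order, which is exactly the setup reused in the proof of Carath\'eodory's theorem (Theorem~\ref{thm:pseudocaratheodory}, including its ``moreover'' clause about extremal vertices); your $S''$ is an arbitrary $4$-subset of $S'$, which is all that Claim~\ref{claim:pseudosteinitz} demands.
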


However, the proof is nearly trivial from the definitions. Moreover we cannot replace $4$ with $3$ as shown by the halfplane hypergraph defined on the point set of four points $S'=S$ in convex position and a fifth point $v$ in the intersection of the diagonal of this $4$-gon. Indeed, here while $v$ is not an extremal vertex of $S'\cup \{v\}$, for any triple of points $S''$ from $S'$, $v$ is an extremal vertex of $S''\cup\{v\}$\footnote{Not surprisingly the same example shows that if in Carath\'eodory's Theorem we would want that $v$ is not only in the convex hull but also in the interior of the convex hull of $S''$ then we also may need $S''$ to have $4$ vertices.}.

Thus we prove the following combination of the two containments, which from its proof seems to be the right generalization of Carath\'eodory's theorem:

\begin{thm}[Carath\'eodory's theorem for pseudoconvex sets]\label{thm:pseudocaratheodory}
	Given a pseudohalfplane hypergraph $\HH$ on vertex set $S$ and a subset $S'\subseteq S$ of its vertices and a vertex $v\in S$. If $v$ is strongly inside the convex hull of $S'$ then there exists an $S''\subseteq S'$, $|S''|\le 3$ such that $v\in Conv(S'')$ in $\HH$. Moreover every vertex of $S''$ can be chosen to be extremal in $S'$.
\end{thm}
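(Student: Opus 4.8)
My plan is to isolate a short self-contained combinatorial core and then address the extremality clause separately. First I would invoke Claim~\ref{claim:strongconvex}: since $v$ is strongly inside the convex hull of $S'$, the vertex $v$ is not an extremal vertex of $\HH[S'\cup\{v\}]$, hence it is neither a topvertex nor a bottomvertex there. Unwinding the definitions, this yields a topset $A$ and a bottomset $B$ of $\HH$ that both \emph{skip} $v$; writing $a_1=\min A,\ a_2=\max A,\ b_1=\min B,\ b_2=\max B$ we get $a_1<v<a_2$ and $b_1<v<b_2$, with $a_1,a_2\in A$, $b_1,b_2\in B$ and $v\notin A\cup B$ (so all four vertices lie in $S'$). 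I would call a pair $p<v<q$ that lies in a common topset (resp.\ bottomset) avoiding $v$ a \emph{top-skip} (resp.\ \emph{bottom-skip}) pair.

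The key tool is a forcing lemma: if $S''$ contains a top-skip pair, then every bottomset of $\HH$ containing $S''$ contains $v$; symmetrically, a bottom-skip pair forces every topset containing $S''$ to contain $v$. This is immediate from ABA-freeness together with complementation: if a bottomset $\beta\supseteq S''$ omitted $v$, then $\bar\beta\in\FF$, and together with the witnessing topset $T\in\FF$ we would have $p,q\in T\setminus\bar\beta$ and $v\in\bar\beta\setminus T$, which is exactly the forbidden pattern $p<v<q$ of Definition~\ref{def:ABA} inside the ABA-free family $\FF$. Consequently $v\in Conv(S'')$ in $\HH$ as soon as $S''$ carries both a top-skip and a bottom-skip pair, the topset side and bottomset side of the convex hull being handled separately.

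With the four points in hand I would perform the Carath\'eodory reduction to three of them in just two cases. If no topset of $\HH$ contains $\{a_1,a_2,b_2\}$ while omitting $v$, then $S''=\{a_1,a_2,b_2\}$ works: the bottomset side is forced by the top-skip pair $\{a_1,a_2\}$ (witnessed by $A$), while the topset side is exactly the case hypothesis. Otherwise some topset $T'$ contains $\{a_1,a_2,b_2\}$ but not $v$; then $\{a_1,b_2\}$ is a top-skip pair, and $S''=\{a_1,b_1,b_2\}$ works: the topset side is forced by the bottom-skip pair $\{b_1,b_2\}$ (witnessed by $B$), and the bottomset side by $\{a_1,b_2\}$ (witnessed by $T'$). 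Either way $|S''|\le 3$ and $v\in Conv(S'')$ in $\HH$. I regard discovering this clean two-case split (and the forcing lemma behind it) as the creative core of the argument.

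What remains—and what I expect to be the main obstacle—is the \emph{extremality} clause, i.e.\ arranging that the chosen vertices are extremal in $S'$. For this I would first upgrade the skip pairs so that their endpoints are extremal: I claim that a topset skipping $v$ contains a topvertex on each side of $v$, so that replacing $a_1,a_2$ by $\min(A\cap E)$ and $\max(A\cap E)$, where $E=E(\HH[S'])$, keeps a top-skip pair with extremal endpoints witnessed by the same $A$, and likewise for $b_1,b_2$ and $B$. Granting this, all of $a_1,a_2,b_1,b_2$ are extremal, the derived pair $\{a_1,b_2\}$ of the second case joins a topvertex with a bottomvertex, and both output triples lie in $E$. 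The delicate point is precisely this claim: unlike $\min A$ and $\max A$, which need not themselves be extremal, the extremal vertices contained in $A$ do straddle $v$, and establishing this combinatorially is where I would lean on the structural tools for extremal vertices developed in Section~\ref{sec:extremal} together with Lemma~\ref{lem:unskippable}.
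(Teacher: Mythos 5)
Your first half is correct, and it takes a genuinely different route from the paper's proof. The paper works in $\HH'=\HH[S'\cup\{v\}]$ with the circular order of its extremal vertices and ``triangulates from the leftmost vertex'' $v_1$: it locates consecutive extremal vertices $c_i,c_{i+1}$ with $v$ strictly below $v_1c_i$ and strictly above $v_1c_{i+1}$, and then verifies $v\in Conv(v_1,c_i,c_{i+1})$ by a case analysis. Your route --- extract a top-skip and a bottom-skip pair from Claim~\ref{claim:strongconvex}, prove the forcing lemma by a single application of ABA-freeness to $T$ and $\bar{\beta}$, and reduce four points to three via the dichotomy ``does some topset of $\HH$ contain $\{a_1,a_2,b_2\}$ and omit $v$?'' --- is shorter and avoids the above/below relation and the circular-order machinery entirely. (One small imprecision: the skip witnesses live in $\FF\cup\bar{\FF}$, not necessarily in $\HH$ itself; this is harmless, since your forcing lemma only needs the witness to lie in $\FF$, resp.\ $\bar{\FF}$.)

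The gap is exactly where you located it, and the claim you rely on is not merely delicate: it is false, even under the hypothesis that $v$ is strongly inside $Conv(S')$. Take the halfplane hypergraph on $S'\cup\{v\}$ with $S'=\{L=(0,0),\,M=(1,0),\,D=(2,-100),\,R=(10,100)\}$ and $v=(1.5,-10)$, vertices ordered by $x$-coordinate, topsets being upward halfplanes. Then $v$ lies strictly inside the triangle $LDR$, so it is strongly inside $Conv(S')$; the topvertices of $\HH[S']$ are $\{L,R\}$ and $E(\HH[S'])=\{L,D,R\}$ (the point $M$ is interior). The upward halfplane $A$ bounded by the line $y=-2x+1$ satisfies $A\cap(S'\cup\{v\})=\{M,R\}$: it is a topset skipping $v$, yet it contains no topvertex --- indeed no extremal vertex --- to the left of $v$. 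Consequently $\min(A\cap E)=\max(A\cap E)=R$ does not straddle $v$, and the proposed upgrade ``within the same witness $A$'' collapses.

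What is true is weaker and is precisely Lemma~\ref{lem:conseqtop}: since $v$ is not a topvertex of $\HH[S'\cup\{v\}]$, some \emph{possibly different} topset contains the two consecutive topvertices $t_i<v<t_{i+1}$ and omits $v$ (in the example above, a halfplane bounded by a line just below $LR$). Its proof in the paper is a nontrivial induction using Lemma~\ref{lem:deletetop}, so it does not follow from Lemma~\ref{lem:unskippable} alone. Granting it, your architecture is rescued verbatim: $(t_i,t_{i+1})$ and the symmetric pair $(b_j,b_{j+1})$ are top-/bottom-skip pairs whose endpoints are topvertices/bottomvertices of $\HH[S'\cup\{v\}]$, hence extremal in $S'$ (unskippability passes to induced subhypergraphs), and feeding these four vertices into your two-case split yields the full theorem, extremality clause included; this is in fact how the paper proves Claim~\ref{claim:pseudosteinitz}. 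As written, however, the ``moreover'' part of the statement remains unproven.
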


Note that this indeed directly implies the geometric Carath\'eodory's theorem as given a point set in the plane we can slightly push inside from the convex hull of $S'$ the points which are on the boundary of the convex hull of $S'$ but are not vertices of it (including possibly $v$). Then we can apply Theorem \ref{thm:pseudocaratheodory} on the halfplane hypergraph that this point set induces to get a triple $S''$ of extremal points of $S'$ such that $v\in Conv(S'')$. As these points were not moved in the previous step, and $v$ was moved only slightly, we can conclude that $v$ must be in the convex hull of $S''$ in the geometric sense as well (possibly on its boundary).

\begin{defi}
	Given three subsets $A,B,H$ of a vertex set $S$, with $A\cap B=\emptyset$, $H$ \emph{separates} $A$ and $B$ if $A\subseteq H$ and $H\cap B=\emptyset$ or if $H\cap A=\emptyset$ and $B\subseteq H$.
\end{defi}	

\begin{obs}
	Given a pseudohalfplane hypergraph $\cH$ on vertex set $S$ and three subsets $A,B,H$ of $S$. If $H$ separates $A$ and $B$ and both $H$ and $\bar H$ are hyperedges of $\cH$ then in $\cH$ $Conv(A)\cap Conv(B)=\emptyset$.
\end{obs}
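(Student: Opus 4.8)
The plan is to read the conclusion off directly from the definition of the convex hull, $Conv(S')=\cap\{H'\in\cH:S'\subseteq H'\}$, and to observe that the full pseudohalfplane structure is never actually needed here: the argument uses only the two hypotheses that $H$ and $\bar H$ are both hyperedges of $\cH$ and that $H$ separates $A$ and $B$.

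First I would dispose of the symmetry built into the definition of separation. Separation means either ($A\subseteq H$ and $H\cap B=\emptyset$) or ($H\cap A=\emptyset$ and $B\subseteq H$); since both $H$ and $\bar H$ are hyperedges of $\cH$ and $\bar{\bar H}=H$, the second case turns into the first after renaming $H$ as $\bar H$. So I would assume without loss of generality that $A\subseteq H$ and $H\cap B=\emptyset$.

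Next come the two key containments. Because $A\subseteq H$ and $H\in\cH$, the hyperedge $H$ is one of the sets over which the intersection defining $Conv(A)$ is taken, so $Conv(A)\subseteq H$. Dually, $H\cap B=\emptyset$ says precisely that $B\subseteq\bar H$, and since $\bar H\in\cH$ by hypothesis, $\bar H$ appears in the intersection defining $Conv(B)$, so $Conv(B)\subseteq\bar H$. Combining these gives $Conv(A)\cap Conv(B)\subseteq H\cap\bar H=\emptyset$, which is the desired conclusion.

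There is no real obstacle in this argument; the only point requiring a moment's care is the bookkeeping in the first step. In whichever orientation the separation holds, one must bound $Conv(A)$ by one of $H,\bar H$ and $Conv(B)$ by the other, which is possible exactly because the hypothesis guarantees that \emph{both} $H$ and $\bar H$ are hyperedges. I would remark explicitly that this is the sole role of that hypothesis, and that ABA-freeness and the ordering of $S$ play no part, so the statement in fact holds for an arbitrary hypergraph in which a complementary pair of separating hyperedges is present.
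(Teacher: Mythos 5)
Your proof is correct and is exactly the argument the paper leaves implicit: the observation is stated without proof precisely because it follows directly from Definition \ref{def:pseudoconvex}, with $Conv(A)\subseteq H$ and $Conv(B)\subseteq \bar H$ forcing an empty intersection. Your added remark that ABA-freeness and the vertex ordering play no role (so the statement holds for any hypergraph containing the complementary pair $H,\bar H$) is also accurate.
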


The following are the Hahn-Banach Separation theorem and Kirchberger's theorem for pseudoconvex sets:

\begin{thm}[Separation theorem for pseudoconvex sets]\label{thm:sep}
	Given a pseudohalfplane hypergraph $\HH$ on vertex set $S$ and subsets $A,B\subset S$.
	If we cannot extend $\HH$ with a new vertex $v$ such that $v\in Conv(A)\cap Conv (B)$ in this extension then we can extend $\HH$ with a new hyperedge $H$ such that $H$ separates $A$ and $B$.\footnote{
		Note that there is an extension with a vertex $v$ such that $v\in Conv(A)\cap Conv (B)$ if and only if there is an extension with a vertex $v$ such that $Conv(A)\cap Conv (B)\ne \emptyset$.}
\end{thm}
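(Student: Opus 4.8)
The plan is to recognize the Separation theorem as essentially a corollary of the already-established Discrete Helly theorem for pseudoconvex sets (Theorem~\ref{thm:weakhellypsconvex}), applied to a well-chosen subfamily, with the separating hyperedge read off from the \emph{failure} of the Helly hypothesis. Throughout I would assume $A,B\neq\emptyset$, since the cases $A=\emptyset$ or $B=\emptyset$ are handled directly by $H=\emptyset$ or $H=S$. The family I would work with is $\CC=\{H\in\HH : A\subseteq H \text{ or } B\subseteq H\}$, every member of which is a convex set of $\HH$ (being a single hyperedge), so that Theorem~\ref{thm:weakhellypsconvex} applies to it.

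The first real step is a translation of the hypothesis into the language of $\CC$. For any extension $\HH^+$ of $\HH$ by a single vertex $v$, the identity $H^+\cap S=H$ forces $\{H^+ : A\subseteq H^+\}=\{H^+ : A\subseteq H\}$, so $v\in Conv(A)$ in $\HH^+$ holds exactly when $v\in H^+$ for every $H\in\HH$ with $A\subseteq H$; the same holds for $B$. Consequently $v\in Conv(A)\cap Conv(B)$ in $\HH^+$ if and only if $v$ lies in the extension of \emph{every} member of $\CC$. Thus the assumption ``$\HH$ cannot be extended by a vertex $v$ with $v\in Conv(A)\cap Conv(B)$'' is precisely the statement that $\HH$ admits no vertex-extension lying in all extensions of sets from $\CC$, i.e.\ the negation of the conclusion of Theorem~\ref{thm:weakhellypsconvex} for this $\CC$.

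Applying Theorem~\ref{thm:weakhellypsconvex} in contrapositive form then yields $H_1,H_2,H_3\in\CC$ with $H_1\cap H_2\cap H_3=\emptyset$, and a short pigeonhole argument extracts the separator. Since each $H_i$ witnesses membership in $\CC$ through $A$ or through $B$, two of them use the same set; say $A\subseteq H_i$ and $A\subseteq H_j$. Then $A\subseteq H_i\cap H_j$, so for the remaining index $k$ we get $A\cap H_k\subseteq H_1\cap H_2\cap H_3=\emptyset$; as $A\neq\emptyset$ this means $A\not\subseteq H_k$, and hence $H_k\in\CC$ forces $B\subseteq H_k$. Therefore $B\subseteq H_k$ and $H_k\cap A=\emptyset$, so $H_k$ separates $A$ and $B$, the case of two members containing $B$ being symmetric.

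I expect the conceptual content to be exactly this reduction; once it is seen, nothing deeper is needed, and most of the write-up is bookkeeping — verifying the extension semantics in the translation step and dispatching the degenerate cases $A=\emptyset$, $B=\emptyset$. The one genuinely delicate point, and the place I would spend care, is the word \emph{new}: the argument delivers a separator $H_k$ that already belongs to $\HH$, so it certifies that separation is realized within the pseudohalfplane structure but does not by itself produce a hyperedge outside $\HH$. To meet the statement literally (when $\HH$ is not already maximal) I would either observe that the existence of a present separator already settles the separation of $A$ and $B$, or obtain a bona fide new separating hyperedge by perturbing $H_k$ — moving a suitable non-extremal boundary vertex across it while preserving ABA-freeness of the underlying $\FF$ — which is a routine but slightly technical modification.
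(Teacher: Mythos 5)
Your proof is correct, but it takes a genuinely different and much shorter route than the paper's. The paper never proves Theorem \ref{thm:sep} directly: it deduces it from Theorem \ref{thm:multiequi}, whose key direction $(2)\rightarrow(3)$ is a long structural argument relying on Carath\'eodory's theorem (Theorem \ref{thm:pseudocaratheodory}), the above/below relations, Lemmas \ref{lem:4allextremal} and \ref{lem:4vertices}, and Lemma \ref{lem:extensiondual}, with the separator built abstractly as an extension of the set $A$ itself to a hyperedge of an enlarged hypergraph. Your route --- translating the hypothesis into the failure of the Helly conclusion for $\CC=\{H\in\HH: A\subseteq H \text{ or } B\subseteq H\}$, applying Theorem \ref{thm:weakhellypsconvex} in contrapositive, and extracting the separator by pigeonhole --- bypasses all of that: the translation is exactly right (for any vertex-extension, $A\subseteq H^+\iff A\subseteq H$, so $v\in Conv(A)\cap Conv(B)$ there is precisely $v\in\bigcap_{H\in\CC}H^+$), and the pigeonhole correctly produces a member of $\CC$ disjoint from $A$ and containing $B$ (or symmetrically). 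Amusingly, your argument has the same flavor as the paper's own proof of Lemma \ref{lem:4vertices}, which also feeds a cleverly chosen subfamily of hyperedges into Theorem \ref{thm:weakhellypsconvex}, but there in the positive direction. What the paper's heavier machinery buys is Theorem \ref{thm:multiequi} itself, i.e.\ Kirchberger's theorem (Theorem \ref{thm:kirch}): your argument inherently needs the global hypothesis about $Conv(A)\cap Conv(B)$ and cannot recover the local $|D|\le 4$ condition. What your route buys is brevity and a formally stronger conclusion: the separator is an already existing hyperedge of $\HH$. On the one caveat you flag, the word \emph{new}: you are right to read the statement loosely, i.e.\ as ``$\HH\cup\{H\}$ is a pseudohalfplane hypergraph and $H$ separates $A$ and $B$''; the paper itself uses this reading (its proof never checks that the constructed separator is absent from $\HH$, and a strict reading would even make the statement false for a maximal $\HH$, which admits no proper extension at all). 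So your first fallback is the correct resolution, while the ``perturbation'' fallback should be dropped, since for maximal $\HH$ no genuinely new hyperedge can be added whatsoever.
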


The halfplane hypergraph induced by $4$ points in convex position shows that in Theorem \ref{thm:sep} it is not enough to assume that $Conv(A)\cap Conv (B)=\emptyset$. Indeed, let opposite pairs of vertices be the sets $A$ and $B$, then $Conv(A)\cap Conv (B)=\emptyset$ yet it can be checked (e.g, with the help of a computer program, for a similar argument see Claim \ref{claim:no2-1}) that we cannot separate $A$ and $B$ by a new hyperedge. The vertex $v$ in the intersection of the diagonals (forming the $5$ point construction from before) shows that the stronger assumption we used in Theorem \ref{thm:sep} does fail, see also Lemma \ref{lem:4vertices}.

\begin{thm}[Kirchberger's theorem for pseudoconvex sets]\label{thm:kirch}
	Given a pseudohalfplane hypergraph $\HH$ on vertex set $S$ and subsets $A,B\subset S$. If for every subset $D\subset S$ with $|D|\le 4$ there exists a hyperedge of $\HH$ separating $A\cap D $ and $B\cap D$, then there exists an extension $\HH'$ of $\HH$ with one new hyperedge $H$ such that $H$ separates $A$ and $B$.	
\end{thm}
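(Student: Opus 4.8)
The plan is to dualize and reduce the statement to the Helly theorem for convex sets of pseudohemispheres (Theorem~\ref{thm:weakhellypshsconvex}); the match between the hypothesis bound $|D|\le 4$ and the $4\to +1$ form of that theorem is the clue that this is the right tool. The key observation is that the dual of a pseudohalfplane hypergraph is a pseudohemisphere hypergraph, and that under this duality a hyperedge of $\HH$ becomes a \emph{vertex} of the dual. Thus a \emph{new separating hyperedge} of $\HH$ in an extension is exactly a \emph{new vertex} in an extension of the dual, which is precisely what the ``$+1$'' conclusion of Theorem~\ref{thm:weakhellypshsconvex} delivers.

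Before dualizing I would make two harmless preparations. First, replace $\HH$ by $\HH_c=\HH\cup\{\bar H:H\in\HH\}$; since $\HH\subseteq\FF\cup\bar\FF$ forces $\bar H\in\FF\cup\bar\FF$, this is still a pseudohalfplane hypergraph, and it inherits the separation hypothesis. The purpose is that now both orientations of every separator are available, so the direction of separation can be ignored. Second, passing to the dual pseudohemisphere hypergraph $\HH_c^{*}$ (whose vertices are the hyperedges of $\HH_c$, and whose hyperedge $\hat s=\{H\in\HH_c:s\in H\}$ records, for each $s\in S$, which hyperedges contain $s$), I would use that the complement of any hyperedge of a pseudohemisphere hypergraph may be added while staying a pseudohemisphere hypergraph---since $\overline{F\Delta X}=\bar F\Delta X$ lies in the defining family---so that we may assume each $\overline{\hat s}$ is a hyperedge, hence a convex set. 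Now set
\[
\CC=\{\hat a:a\in A\}\cup\{\overline{\hat b}:b\in B\}.
\]
A vertex $H$ of $\HH_c^{*}$ lies in every member of $\CC$ precisely when $a\in H$ for all $a\in A$ and $b\notin H$ for all $b\in B$, i.e.\ precisely when the hyperedge $H$ separates $A$ and $B$ with $A$ inside.

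Next I would verify the hypothesis of Theorem~\ref{thm:weakhellypshsconvex}: any four members of $\CC$ have a common vertex. Four members determine a set $D\subseteq A\cup B$ with $|D|\le 4$, and by assumption some hyperedge $H$ of $\HH$ separates $A\cap D$ and $B\cap D$. If $A\cap D\subseteq H$ then $H$ is the desired common vertex; otherwise $B\cap D\subseteq H$, and then $\bar H\in\HH_c$ is a common vertex (this is the only place the complement-closure of the primal is used). Theorem~\ref{thm:weakhellypshsconvex} then produces an extension of $\HH_c^{*}$ by a new vertex contained in the extension of every member of $\CC$. Reading this vertex back through the duality gives a new hyperedge $H_0$ of an extension of $\HH_c$ with $a\in H_0$ for all $a\in A$ and $b\notin H_0$ for all $b\in B$, so $H_0$ separates $A$ and $B$. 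Finally, since $H_0$ is new we have $H_0\notin\HH$, and $\HH\cup\{H_0\}$ is a subfamily of the pseudohalfplane hypergraph obtained by re-dualizing, hence itself a pseudohalfplane hypergraph; this is the required extension $\HH'$ of $\HH$ by the single hyperedge $H_0$.

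I expect the main difficulty to lie in the precise primal--dual dictionary for extensions: one must check that extending the dual pseudohemisphere hypergraph by a vertex (as produced by Theorem~\ref{thm:weakhellypshsconvex}) corresponds, after dualizing back, to extending $\HH_c$ by a \emph{single} hyperedge while preserving the pseudohalfplane structure, and that membership of the new vertex in the extended convex sets of $\CC$ translates exactly into the separation of $A$ and $B$. The complement-pair clause of Definition~\ref{def:ext} is what should make this translation type-preserving, but carrying out the bookkeeping of the two complement-closures together with this correspondence is the delicate part; the Helly theorem itself is then used as a black box. (A direct attack through the Separation theorem~\ref{thm:sep} seems harder, since turning $v\in Conv(A)\cap Conv(B)$ into a small non-separable $D$ would require the ``strongly inside'' hypothesis of Carath\'eodory's theorem, which is not available here.)
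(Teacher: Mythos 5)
Your dualization dictionary is correct up to the very last step: the dual of a pseudohalfplane hypergraph is indeed a pseudohemisphere hypergraph, complement-closure is harmless on both sides, and your verification of the Helly hypothesis (four members of $\CC$ determine a $D$ with $|D|\le 4$, and the assumed separator or its complement in $\HH_c$ is a common vertex) is fine. The gap is the re-dualization. What you can conclude after applying Theorem~\ref{thm:weakhellypshsconvex} and dualizing back is only that $\HH_c\cup\{H_0\}$ is a pseudo\emph{hemisphere} hypergraph: the dual of a pseudohalfplane hypergraph is in general \emph{not} a pseudohalfplane hypergraph (this is exactly why \cite{abafree} introduces pseudohemisphere hypergraphs as the common generalization of pseudohalfplane hypergraphs and their duals), so ``the pseudohalfplane hypergraph obtained by re-dualizing'' in your closing sentence does not exist, and being a subfamily of a pseudohemisphere hypergraph does not make $\HH\cup\{H_0\}$ a pseudohalfplane hypergraph on the ordered set $S$, which is what Definition~\ref{def:ext} and the theorem demand. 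Moreover, this is not a bookkeeping issue that more care could fix, because the principle you need is false. Take $S=\{1,2,3,4\}$ with its natural order, let $\FF=\{\{1\},\{3\},\{4\},\{1,2\},\{1,3\},\{1,3,4\}\}$ (one checks this is ABA-free), let $\HH_c=\FF\cup\bar\FF$, and let $H_0=\{1,4\}$. Then $\HH_c$ is a complement-closed pseudohalfplane hypergraph, and $\HH_c\cup\{H_0\}$ consists of proper nonempty subsets of $S$ only, hence is a subfamily of the $14$-edge pseudohemisphere hypergraph of Section~\ref{sec:constr} (the tetrahedron construction) and is therefore a pseudohemisphere hypergraph; but $\HH_c\cup\{H_0\}$ contains all four singletons together with the three pairs $\{1,2\},\{1,3\},\{1,4\}$, and no pseudohalfplane hypergraph can contain these: by Claim~\ref{claim:singleton} all four vertices would be extremal, and by Lemma~\ref{lem:hullinterval} each pair-hyperedge must be an interval of length two in the circular order of the extremal vertices, so vertex $1$ could lie in at most two pair-hyperedges. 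So ``pseudohemisphere-compatible extension of a pseudohalfplane hypergraph'' is strictly weaker than ``pseudohalfplane extension''---separation genuinely is easier on the sphere (a hemisphere can cut two vertices of a tetrahedron off from the other two, whereas the paper notes after Theorem~\ref{thm:sep} that the diagonals of a convex quadrilateral can never be separated by a new hyperedge)---and the weaker statement is all your argument proves.

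This is precisely why the paper does not dualize. It proves Theorem~\ref{thm:multiequi} $(2)\rightarrow(3)$ by a direct primal argument: using the above/below trichotomy, Theorem~\ref{thm:pseudocaratheodory}, and Lemmas~\ref{lem:4allextremal} and~\ref{lem:4vertices}, it shows that under hypothesis $(2)$ every vertex of $A$ lies strictly on one fixed side of all pairs from $B$ and vice versa; it then adds $F=A$ as a topset to $\HH'[A\cup B]$, verifies ABA-freeness of this augmented family by hand, and lifts $F$ to the whole vertex set with Lemma~\ref{lem:extensiondual}. That extension lemma is the tool that produces a pseudohalfplane (rather than merely pseudohemisphere) extension, and it is exactly the ingredient your dual route cannot access; Kirchberger then follows in a few lines by adding the complements of the assumed $D$-separators and invoking $(2)\rightarrow(3)$. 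To salvage your strategy you would need a new primal--dual lemma guaranteeing that the vertex supplied by Theorem~\ref{thm:weakhellypshsconvex} can be chosen so that the corresponding primal hyperedge is compatible with the original ABA-free structure on $S$; the example above shows that no such lemma can follow from the pseudohemisphere structure alone, so the missing step is the substance of the proof, not its bookkeeping.
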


We will prove the following theorem which immediately implies the previous two theorems:
\begin{thm}\label{thm:multiequi}
	Given a pseudohalfplane hypergraph $\HH$ on vertex set $S$ and disjoint subsets $A,B\subset S$, the following are equivalent:
	\begin{enumerate}
		\item There exists an extension $\HH'$ of $\HH$ with additional hyperedges such that $\HH'$ cannot be extended with a new vertex $v$ such that $v\in Conv(A)\cap Conv(B)$ in $\HH'$.
		\item There exists an extension $\HH'$ of $\HH$ with additional hyperedges such that $\HH'$ cannot be extended with a new vertex $v$ such that for some subset $D\subseteq S$ with $|D|\le 4$ we have $v\in Conv(A\cap D)\cap Conv(B\cap D)$ in $\HH'$.
		\item There exists an extension $\HH'$ of $\HH$ with one new hyperedge $H$ such that $H$ separates $A$ and $B$.
	\end{enumerate}
\end{thm}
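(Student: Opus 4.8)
The plan is to prove the cycle of implications $(1)\Rightarrow(2)\Rightarrow(3)\Rightarrow(1)$, in which the only substantial step is $(2)\Rightarrow(3)$; this single chain yields Theorem~\ref{thm:multiequi} and hence both Theorem~\ref{thm:sep} and Theorem~\ref{thm:kirch}. First I would dispatch $(1)\Rightarrow(2)$ by keeping the \emph{same} extension $\HH'$. Since $A\cap D\subseteq A$ and $B\cap D\subseteq B$, monotonicity of the convex hull (fewer points yield a smaller hull, computed in whatever hypergraph we extend to) gives $Conv(A\cap D)\subseteq Conv(A)$ and $Conv(B\cap D)\subseteq Conv(B)$; hence any vertex $v$ with $v\in Conv(A\cap D)\cap Conv(B\cap D)$ would also satisfy $v\in Conv(A)\cap Conv(B)$, which $(1)$ forbids.

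Next, $(3)\Rightarrow(1)$. Given a one-hyperedge extension $\HH'=\HH\cup\{H\}$ with, say, $A\subseteq H$ and $B\cap H=\emptyset$, the ABA-free family $\FF'$ witnessing $\HH'$ has $H\in\FF'\cup\bar\FF'$, so $\bar H\in\FF'\cup\bar\FF'$ as well; thus $\HH''=\HH'\cup\{\bar H\}$ is again a pseudohalfplane hypergraph and an extension of $\HH$. In $\HH''$ we have $Conv(A)\subseteq H$ and, since $B\subseteq\bar H\in\HH''$, also $Conv(B)\subseteq\bar H$, whence $Conv(A)\cap Conv(B)=\emptyset$ (this is precisely the observation stated just before Theorem~\ref{thm:sep}). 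Finally, in any vertex-extension of $\HH''$ the images $H^+$ and $\bar H^+$ remain a complement-pair and are therefore disjoint, so no added vertex $v$ can lie in both; as $v\in Conv(A)\cap Conv(B)$ would force $v\in H^+\cap\bar H^+$, the hypergraph $\HH''$ witnesses $(1)$.

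The heart of the matter is $(2)\Rightarrow(3)$. I would first reduce to the extension $\HH'$ supplied by $(2)$: if we can adjoin to $\HH'$ a single hyperedge $H$ separating $A$ and $B$, then since $\HH\subseteq\HH'\subseteq\FF'\cup\bar\FF'$ and $H\in\FF'\cup\bar\FF'$, the family $\HH\cup\{H\}$ is already a valid one-hyperedge extension of $\HH$, giving $(3)$. So I rename $\HH:=\HH'$ and assume henceforth that $\HH$ admits \emph{no} vertex-extension placing a vertex in $Conv(A\cap D)\cap Conv(B\cap D)$ for any $D$ with $|D|\le 4$. The first sub-step is \emph{localization} (the Kirchberger content): upgrade this to the global statement that $\HH$ admits no vertex-extension with $v\in Conv(A)\cap Conv(B)$. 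Following the classical Radon-based proof of Kirchberger's theorem, I would take a minimal $T\subseteq A\cup B$ still carrying an obstruction (a vertex-extension with $v\in Conv(A\cap T)\cap Conv(B\cap T)$); if $|T|\le 4$ this contradicts the hypothesis directly, while if $|T|\ge 5$ I would apply Radon's theorem for pseudoconvex sets to split $T$ and extract a strictly smaller obstructing subset, contradicting minimality. The threshold $4=d+2$ is exactly the planar Radon number, which explains why $|D|\le 4$ is the correct local size.

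The second sub-step, \emph{construction} (the Hahn--Banach content), is where I expect the main obstacle to lie: from ``no vertex-extension meets $Conv(A)\cap Conv(B)$'' one must manufacture an actual separating hyperedge. Here I would exploit the extremal-vertex machinery of Section~\ref{sec:extremal} together with the linear order on $S$. The robust disjointness of $Conv(A)$ and $Conv(B)$ should manifest as a combinatorial gap between their extremal (top- and bottom-) vertices, and along this gap I would define a candidate topset or bottomset $H$ with $A\subseteq H$ and $B\cap H=\emptyset$. The delicate point — the crux of the whole theorem — is verifying that $\HH\cup\{H\}$ stays realizable, i.e.\ that $H$ together with the existing hyperedges lies in a common $\FF\cup\bar\FF$. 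I anticipate proving this by contradiction: any ABA-violation or complement-conflict introduced by $H$ should, via Claim~\ref{claim:strongconvex} and Carath\'eodory's theorem for pseudoconvex sets (Theorem~\ref{thm:pseudocaratheodory}), produce a bounded-size configuration realizing $v\in Conv(A)\cap Conv(B)$ in some vertex-extension, which localization has already ruled out. Once $H$ is adjoined, the reduction above returns a separating hyperedge for the original $\HH$, closing the cycle.
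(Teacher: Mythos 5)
Your cycle $(1)\Rightarrow(2)\Rightarrow(3)\Rightarrow(1)$ is the same decomposition the paper uses, and your arguments for $(1)\Rightarrow(2)$ (monotonicity of $Conv$) and $(3)\Rightarrow(1)$ (add $\bar H$, use that complement-pairs stay complement-pairs under vertex extension) are correct and essentially identical to the paper's. The problem is that everything you propose for $(2)\Rightarrow(3)$ beyond the renaming $\HH:=\HH'$ is either unjustified or left as a sketch, and this is exactly where the content of the theorem lies. Your ``localization'' sub-step rests on a Radon-based minimal-counterexample argument, but that argument does not transfer to this abstract setting: the classical shrinking of an intersecting pair $(A',B')$ with $|A'|+|B'|\ge d+3$ manipulates the coefficients of affine dependences, and there are no coefficients here; Theorem \ref{thm:radon} only applies to sets of size exactly $4$ and produces \emph{some} partition together with a \emph{new} vertex --- it gives you no control to respect the $A$/$B$ bipartition of a larger obstructing set $T$, nor any way to delete a point of $T$. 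Worse, the paper's own counterexample (Figure \ref{fig:cara}) shows that membership $v\in Conv(S')$ cannot in general be localized to bounded subsets of $S'$; this is precisely why the paper replaces plain hull membership by ``strongly inside'' in Theorem \ref{thm:pseudocaratheodory}, and it is a structural obstacle to any naive minimal-obstruction reduction of the kind you describe.

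Your second sub-step (``construction'') is where you yourself say the crux lies, and it is left at the level of ``I anticipate proving this by contradiction.'' The paper's actual route is different and more concrete, and notably it never passes through your global statement at all: it uses hypothesis $(2)$ \emph{directly} on specific small sets $D$. First, with $|D|=3$ it shows each $a\in A$ lying between two points $b<a<b'$ of $B$ is strictly above or strictly below $bb'$; then, using Theorem \ref{thm:pseudocaratheodory} on a $5$-element set together with Lemma \ref{lem:4allextremal} and Lemma \ref{lem:4vertices} (with $|D|=4$), it shows this orientation is the \emph{same} for all such triples, and that the orientation for $B$-points between $A$-points is the opposite one. With these consistency facts, the separating hyperedge is explicit: restrict to $A\cup B$ and add $F=A$ as a topset; the ABA-freeness check is a short case analysis that appeals exactly to the two orientation facts, and Lemma \ref{lem:extensiondual} then lifts $F$ from $A\cup B$ to all of $S$. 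So to repair your proposal you would need to (i) abandon or substantially rework the localization step, and (ii) replace your sketch by an identification of the candidate hyperedge and of the precise combinatorial facts (above/below consistency) needed to verify it --- which is, in effect, the paper's proof.
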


We will use Theorem \ref{thm:pseudocaratheodory} in the proof of Theorem \ref{thm:multiequi}. Theorems \ref{thm:sep} and \ref{thm:kirch} will have a short proof using Theorem \ref{thm:pseudocaratheodory}. Note that similar to how the classic Kirchberger's Theorem implies the classic Carathéodory's Theorem, setting $A=\{v\}$ and $S'=S\setminus\{v\}$, Theorem \ref{thm:multiequi} implies Theorem \ref{thm:pseudocaratheodory}. 


%
%


\subsection{Radon's theorem}

Using our discrete Helly theorems we can prove a discrete Radon's theorem for pseudoconvex sets:

\begin{thm}[Radon's theorem for pseudoconvex sets]\label{thm:radon}
	Given a pseudohalfplane hypergraph $\HH$ on vertex set $S$ and a subset $S'\subseteq S$ with $|S'|=4$, then we can extend the hypergraph with a new vertex $v$ so that it is still a pseudohalfplane hypergraph and there is a partition of $S'$ into two subsets such that the convex hulls of these subsets both contain $v$.
\end{thm}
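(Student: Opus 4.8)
The plan is to use the Discrete Helly theorem for pseudoconvex sets (Theorem~\ref{thm:weakhellypsconvex}) to manufacture the Radon point, and then read off the partition from the extremal structure of $S'$. Write $S'=\{1,2,3,4\}$ and for each $i\in S'$ let $C_i=Conv(S'\setminus\{i\})$ be the ``triangle'' spanned by the other three vertices. First I would check the hypothesis of Theorem~\ref{thm:weakhellypsconvex} for the family $\CC=\{C_1,C_2,C_3,C_4\}$: given any three of them, say $C_i,C_j,C_k$ with $\ell$ the remaining index, the vertex $\ell$ lies in all three, since $\ell\in S'\setminus\{i\}\subseteq H$ for every hyperedge $H\supseteq S'\setminus\{i\}$ and hence $\ell\in C_i$, and likewise for $j,k$; thus $\ell\in C_i\cap C_j\cap C_k$. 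So every triple of $\CC$ has a common vertex, and Theorem~\ref{thm:weakhellypsconvex} lets me extend $\HH$ to a pseudohalfplane hypergraph $\HH^+$ by a new vertex $v$ with $v\in C_i^+$ for every $i$; in particular $v\in Conv(S')^+$.

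Call a subset $A\subseteq S'$ \emph{good} if $v\in Conv(A)^+$ in $\HH^+$. Since $A\subseteq A'$ implies $Conv(A)\subseteq Conv(A')$, goodness is upward closed, and by the previous paragraph every $3$-element subset of $S'$ is good. To prove the theorem it therefore suffices to exhibit a partition of $S'$ into two good parts, and because any $3$-subset is already good, it is enough to produce either a good singleton (then $\{i\}\mid S'\setminus\{i\}$ works) or two complementary good pairs (then $\{i,j\}\mid\{k,l\}$ works). I would split into two cases according to the extremal structure of the four vertices in $\HH^+[S']$, which by Claim~\ref{claim:strongconvex} amounts to asking whether some vertex of $S'$ is strongly inside the convex hull of the other three.

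In the first case some vertex $d$ is not extremal in $\HH^+[S']$, i.e.\ $d$ is strongly inside $Conv(S'\setminus\{d\})$. Here I would not even need the vertex produced above: instead I extend $\HH$ by a vertex $v$ that is a \emph{twin} of $d$, placed next to $d$ in the order and lying in exactly those hyperedges that contain $d$ (one checks this is a valid extension in the sense of Definition~\ref{def:ext}). Then $v\in Conv(\{d\})^+$ is immediate, while $d\in Conv(S'\setminus\{d\})$ in every extension (as $d$ is strongly inside) forces $v\in Conv(S'\setminus\{d\})^+$ as well; the partition $\{d\}\mid S'\setminus\{d\}$ then does the job.

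The remaining case, in which all four vertices are extremal (``convex position''), is where the real work lies. Using the fact that the extremal vertices split into topvertices and bottomvertices, each ordered by the linear order on $S$, I would pair the four vertices into the two ``diagonals'' $\{i,k\}$ and $\{j,l\}$ of their cyclic order and take $A=\{i,k\}$, $B=\{j,l\}$. The crux is to show that $A$ and $B$ cannot be separated by any addable hyperedge, equivalently that every hyperedge of $\HH^+$ containing both endpoints of one diagonal must contain at least one endpoint of the other; this is the combinatorial counterpart of the elementary planar fact that a halfplane through two opposite corners of a convex quadrilateral captures one of the remaining two, and I expect to prove it directly from ABA-freeness together with the top/bottom ordering of the extremal vertices. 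Granting this, any hyperedge $H\supseteq A$ also contains some vertex of $B$, hence contains a set $S'\setminus\{m\}$, so that $v\in C_m^+\subseteq H$; therefore $v\in Conv(A)^+$, and symmetrically $v\in Conv(B)^+$, making $\{i,k\}\mid\{j,l\}$ the desired Radon partition. Alternatively, the very same non-separation statement feeds directly into the Separation theorem (Theorem~\ref{thm:sep}) to produce $v$. I expect this non-separation lemma in convex position --- making the notions of ``cyclic order'' and ``diagonal'' precise in the purely combinatorial top/bottom setting and verifying it via ABA-freeness --- to be the main obstacle; the interior case and the Helly reduction are comparatively routine.
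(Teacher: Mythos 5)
Your proof is correct and is essentially the paper's proof: the same dichotomy (some vertex of $S'$ non-extremal in $\HH[S']$, versus all four extremal), with the non-extremal case settled by containment in the hull of the other three (the paper invokes Corollary \ref{cor:convofextremalsub} and is terser here; your twin vertex makes the required extension explicit) and the convex-position case settled by Helly plus the diagonal non-separation fact. The ``crux'' you leave unproven is exactly Lemma \ref{lem:hullinterval} (every hyperedge meets the extremal vertices in an interval of their circular order, quoted from \cite{kbdiscretehelly}), which is precisely how the paper's Lemma \ref{lem:4vertices} handles this case --- so your argument is complete once that lemma is cited, the only cosmetic difference being that you apply Theorem \ref{thm:weakhellypsconvex} to the four triangles $Conv(S'\setminus\{i\})$ whereas the paper applies it to the family of hyperedges containing a diagonal.
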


Note that we cannot guarantee that $v$ is already in $S$, as already in the plane if we have a point set $P$ in convex position then for any two disjoint subsets of the points their convex hull does not contain any of the points of $P$.

\subsection{Cup-Cap Theorem}

We aim to show a generalization of the well-known Cup-Cap Theorem by Erd\H os and Szekeres, to state it we first need to define cups and caps of pseudohalfplane hypergraphs.

\begin{defi}
	Given a pseudohalfplane hypergraph $\HH$ on vertex set $S$ and a subset $A$ of the vertices. We say that $A$ forms a \emph{cup} (resp. \emph{cap}) if in $\HH[A]$ every vertex is a bottomvertex (resp.\ topvertex). A cup (resp. cap) on $k$ vertices is called a \emph{$k$-cup} (resp.\ a \emph{$k$-cap}).
\end{defi}

It is easy to see that when defined on a halfplane hypergraph these coincide with the 
geometric notions of cups and caps.

\begin{thm}[Cup-Cap Theorem for pseudoconvex sets]\label{thm:pseudoesz}
	Given a pseudohalfplane hypergraph $\HH$ on vertex set $S$ of size $|S|\ge {\binom{k+l-4}{k-2}}+1$, it contains either a $k$-cup or an $l$-cap.
\end{thm}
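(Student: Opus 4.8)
# Proof Proposal for the Cup-Cap Theorem (Theorem \ref{thm:pseudoesz})

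\textbf{Overall strategy.}
The plan is to mimic the classical Erdős--Szekeres inductive argument, where the Ramsey-type bound $\binom{k+l-4}{k-2}+1$ arises from a double induction on $k$ and $l$. The key quantity to track is the analogue of the classical function $f(k,l)$, the minimum number of vertices guaranteeing a $k$-cup or an $l$-cap; I would aim to prove that $f(k,l)\le \binom{k+l-4}{k-2}+1$ via the Pascal-type recurrence $f(k,l)\le f(k-1,l)+f(k,l-1)-1$, together with trivial base cases such as $f(2,l)=f(k,2)=2$ (any two vertices form both a $2$-cup and a $2$-cap in the induced subhypergraph on two vertices). The heart of the matter is therefore establishing this recurrence in the purely combinatorial setting, where ``cup'' and ``cap'' refer to all vertices being bottomvertices or topvertices of the relevant induced subhypergraph $\HH[A]$.

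\textbf{Setting up the induction.}
First I would fix notation and assume $k,l\ge 3$, reducing to the recurrence. Given a pseudohalfplane hypergraph $\HH$ on $S$ with $|S|\ge f(k-1,l)+f(k,l-1)-1$, I would call a vertex $a$ a \emph{cup-endpoint} if it is the leftmost vertex of some $(k-1)$-cup, i.e., there is a set $A\ni a$ with $|A|=k-1$ forming a cup in $\HH[A]$ and $a=\min(A)$. The classical proof partitions $S$ into the set $T$ of cup-endpoints and its complement. The plan is: if $|T|\ge f(k,l-1)$ then work inside $T$ to find either a $k$-cup or an $(l-1)$-cap that extends; if $|S\setminus T|\ge f(k-1,l)$ then work inside the complement. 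Since $|S|\ge f(k-1,l)+f(k,l-1)-1$, one of these two cases must hold by pigeonhole (this is where the ``$-1$'' in the recurrence is consumed).

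\textbf{The extension steps --- the main obstacle.}
The delicate and essential part, which I expect to be the genuine difficulty, is the gluing argument: showing that a $(k-1)$-cup ending at $a$ together with an appropriate neighbor extends to a $k$-cup, and dually for caps. In the geometric proof one uses the convexity of slopes; here the only tool available is the combinatorial structure of bottomvertices/topvertices of induced subhypergraphs. The crucial lemma I would need is a \emph{three-vertex transitivity} property: if $\{a,b\}$ is a $2$-cup (vacuously true for any pair) and certain adjacency conditions hold on the endpoints, then prepending or appending a vertex preserves the cup property in the new induced subhypergraph. I would isolate this as a claim stating, roughly, that whether a prefix/suffix vertex is a bottomvertex of $\HH[A]$ is governed by the ABA-free structure of the witnessing $\FF$, and that bottomvertices of a subset ``propagate'' under the left-to-right ordering in a way mirroring convex position. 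Establishing this cleanly --- using only Lemma \ref{lem:unskippable} and the definitions of top/bottomvertices applied to induced subhypergraphs --- is where the real work lies, since induced subhypergraphs of pseudohalfplane hypergraphs behave subtly (as the paper already warns, uniformity is not inherited).

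\textbf{Closing the induction.}
Once the extension lemma is in hand, the two cases combine formally: in the cup-endpoint set $T$, an $(l-1)$-cap found by induction either already is an $l$-cap or its endpoint is a cup-endpoint, letting us extend the associated $(k-1)$-cup to a $k$-cup; symmetrically in the complement. This yields the recurrence and hence the stated bound by the standard binomial identity $\binom{k+l-5}{k-3}+\binom{k+l-5}{k-2}=\binom{k+l-4}{k-2}$. I would finish by verifying the base cases and noting that the entire argument is internal to $\HH$ (no vertex extension of the hypergraph is needed, unlike in the Helly-type results), so the conclusion is that $\HH$ itself contains a $k$-cup or an $l$-cap.
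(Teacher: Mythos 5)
Your skeleton --- the double induction on $(k,l)$ with the recurrence $f(k,l)\le f(k-1,l)+f(k,l-1)-1$, base cases $f(k,2)=f(2,l)=2$, and the pigeonhole split along cup-endpoints --- is exactly the route the paper takes: its proof of Theorem \ref{thm:pseudoesz} literally says to follow the original Erd\H os--Szekeres argument verbatim once the geometric step is replaced by a combinatorial lemma. The genuine gap is that this one step, which you correctly flag as ``where the real work lies,'' is the entire mathematical content of the theorem, and you never establish it; you only gesture at a ``three-vertex transitivity'' or ``propagation'' property you would need. Moreover, that is not the right shape of statement. What the induction needs (and what the paper proves as Lemma \ref{lem:eszlemma}) is a dichotomy at the shared vertex: if a $k$-cup $A=\{a_1,\dots,a_k=v\}$ ends at $v$ and an $l$-cap $B=\{v=b_1,\dots,b_l\}$ starts at $v$, then either $A\cup\{b_2\}$ is a $(k+1)$-cup or $\{a_{k-1}\}\cup B$ is an $(l+1)$-cap. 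This either/or is the abstract surrogate for the classical slope comparison at $v$; a one-sided claim that a cup extends ``under adjacency conditions'' is false in general, and no propagation of bottomvertex-ness alone yields the disjunction.

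For the record, here is how the paper closes that gap, so you can see what was missing. Write $\HH\subseteq\FF\cup\bar{\FF}$ with $\FF$ ABA-free and suppose both extensions fail. Since $A\cup\{b_2\}$ is not a cup, some $a_i$ is skipped by a bottomset $\bar{F_A}$ (with $F_A\in\FF$) on $A\cup\{b_2\}$; because $A$ itself is a cup, the vertex of $\bar{F_A}$ to the right of $a_i$ must be $b_2$, so $\bar{F_A}$ contains $b_2$ and some $a_j$ with $j<i$ while avoiding $v$, and because $B$ is a cap one further deduces that $F_A$ contains $v$ and avoids every $b\in B\setminus\{v\}$. Symmetrically, since $\{a_{k-1}\}\cup B$ is not a cap, there is a topset $F_B\in\FF$ that contains every $a\in A\setminus\{v\}$ (in particular $a_j$), contains some $b_{j'}$, and avoids $v$. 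Then $F_A$ and $F_B$ form an ABA-occurrence on $a_j<v<b_{j'}$, contradicting the ABA-freeness of $\FF$. (The paper also sketches a second proof closer to your transitivity instinct: every triple of vertices is a cup or a cap, one can greedily add hyperedges so that each triple becomes exactly one of the two, and the resulting two-coloring of triples is transitive, so the Erd\H os--Szekeres theorem for transitive colorings applies --- but there, too, the transitivity verification is a concrete ABA-freeness argument that must be carried out, not assumed.)
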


Note that this theorem is different from our other results in the sense that it does not require the notion of the convex hull of a subset, instead it relies on the notion of extremal vertices.

\subsection{Geometric variants}\label{sec:geomconseq}

The following brief summary of the relevant geometric notions is borrowed from the preceding companion paper \cite{kbdiscretehelly}.

\smallskip
\textbf{Pseudolines.}
A {\em pseudoline arrangement} is a finite collection of simple curves (Jordan arcs) in the plane such that each curve cuts the plane into two components (i.e., both endpoints of each curve are at infinity) and any two of the curves are either disjoint or intersect once, and in the intersection point they cross. It is usually also required and so we require as well that they intersect exactly once. 
However, this restriction is not always needed and thus to be able to distinguish the two variants, we call a \emph{loose} pseudoline arrangement an arrangement where not all pairs of pseudolines intersect. An arrangement of pseudolines is \emph{simple} if no three pseudolines meet at a point. Wlog. we can assume that the pseudolines are $x$-monotone bi-infinite curves (see, e.g., \cite{abafree}), such arrangements are sometimes called \emph{Euclidean} or \emph{graphic} pseudoline arrangements. 

One can regard a pseudoline arrangement as a plane graph (with half-infinite edges): the vertices of an arrangement of pseudolines are the intersection points of the pseudolines, the edges are the maximal connected parts of the pseudolines that do not contain a vertex and the faces are the maximal connected parts of the plane which are disjoint from the edges and the vertices of the arrangement.\footnote{The vertices of an arrangement should not be confused with the vertices of a hypergraph.}
We say that two pseudoline arrangements are (combinatorially) {\em equivalent} if there is a one-to-one adjacency-preserving correspondence between their pseudolines, vertices, edges and faces.

For an introduction into pseudoline arrangements see Chapter 5 of \cite{handbook} by Felsner and Goodman.

\smallskip
\textbf{Pseudohalfplanes.}
Given a pseudoline arrangement, a \emph{pseudohalfplane family} is the subfamily of the above defined components (one on each side of each pseudoline). A pseudohalfplane family is simple (resp. loose) if the boundaries form a simple (resp. loose) pseudoline arrangement.
A pseudohalfplane family is upwards if we just take components that are above the respective pseudoline (in which case the pseudolines are assumed to be $x$-monotone).

In \cite{abafree} it is shown that given a family $\EE$ of pseudohalfplanes in the plane and a set of points $P$ then the hypergraph whose hyperedges are the subsets that we get by intersecting regions of $\EE$ with $P$ is a pseudohalfplane hypergraph, and that all pseudohalfplane hypergraphs can be realized this way.\footnote{In fact they only prove that we can realize them with loose simple pseudoline arrangements but their argument can be easily modified to have a realization with a simple and not loose pseudoline arrangement as well.} If $\EE$ is a family of upwards pseudohalfplanes then we get the ABA-free hypergraphs and all ABA-free hypergraphs can be realized with upwards pseudohalfplanes. Thus, all our results about pseudohalfplane hypergraphs implies the respective result about (loose and not loose) families of pseudohalfplanes where we replace vertices with points and hyperedges with pseudohalfplanes.

\bigskip
Our results are strictly combinatorial as they are about pseudohalfplane hypergraphs. Nevertheless, we claim that in these results we can replace pseudohalfplane hypergraphs with families of pseudohalfplanes and instead of adding a new vertex $v$ (in case the statement claims so) we can actually find a point in the plane with the required property. 

Given a family of pseudohalfplanes $\EE$ then for a (not necessarily finite) set of points $P$ we define $Conv(P)$ as the intersection of all pseudohalfplanes that contain $P$. A subset of the plane is convex (with respect to the given pseudohalfplane family) if $P=Conv(P)$, that is, if it is the intersection of the pseudohalfplanes of a subfamily of $\EE$.

While we omit translating all our results to this geometric setting, we provide the translation for three of them, which we prove in Section \ref{sec:geom}.

\begin{thm}[Strong Discrete Helly theorem for pseudoconvex sets in the plane, $3\rightarrow 1$]\label{thm:weakhellypsconvexplane}
	Given a finite family of pseudohalfplanes $\EE$. Let $\CC$ be a subfamily of the convex subsets of the plane with respect to $\EE$ such that every triple of convex sets from $\CC$ has a non-empty intersection, then there is a point in the plane which is in every member of $\CC$.
\end{thm}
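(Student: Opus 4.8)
The plan is to deduce this geometric statement from its combinatorial counterpart, Theorem \ref{thm:weakhellypsconvex}, the extra work being the passage from the abstract new vertex back to an honest point of the plane. Since $\bigcap \CC$ is itself a finite intersection of pseudohalfplanes, it suffices to prove that $\bigcap\CC\neq\emptyset$: any point of this region then lies in every member of $\CC$.

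First I would discretize $\EE$. Consider the arrangement $\AA$ of the pseudolines bounding the members of $\EE$ and pick one representative point in each cell of $\AA$ of every dimension (faces, edges and vertices); let $P$ be the resulting finite point set and let $\HH=\HH(P,\EE)$ be the induced pseudohalfplane hypergraph. For $C\in\CC$, writing $C=\bigcap_j h_j$ with $h_j\in\EE$, the set $C\cap P=\bigcap_j(h_j\cap P)$ is an intersection of hyperedges of $\HH$, hence a convex set of $\HH$ in the sense of Definition \ref{def:pseudoconvex}; put $\CC'=\{C\cap P:C\in\CC\}$. The point of choosing $P$ this way is the following transfer of the hypothesis: every nonempty intersection of pseudohalfplanes is a union of cells of $\AA$, so if nonempty it contains a point of $P$. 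Hence the assumption that every triple from $\CC$ has nonempty intersection in the plane yields that every triple from $\CC'$ has a common vertex.

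Now I would apply Theorem \ref{thm:weakhellypsconvex} to $\HH$ and $\CC'$. This produces an extension $\HH^+$ of $\HH$ by a single new vertex $v$ with $v\in C^+$ for every $C\in\CC'$, where $C^+$ denotes the extension of $C\cap P$ (Definition \ref{def:ext}). Unwinding the definitions, $v\in C^+=\bigcap_j(h_j\cap P)^+$ says exactly that for each pseudohalfplane $h_j$ occurring in $C$ the combinatorial extension places $v$ on the $h_j$-side. Thus it remains to convert this prescribed side-pattern of $v$ into a concrete point.

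The heart of the matter, and the step I expect to be the main obstacle, is the \emph{realizability} claim: the membership pattern of $v$ in $\HH^+$ can be realized by an actual point $q$ of the plane with respect to the \emph{fixed} arrangement $\AA$, i.e. $q\in h \iff v\in (h\cap P)^+$ for every $h\in\EE$. Granting this, for each $C=\bigcap_j h_j\in\CC$ we obtain $q\in h_j$ for all $j$, hence $q\in C$, so $q\in\bigcap\CC$ and we are done (and since $q$ lands in some cell of $\AA$, one may even take the representative point of $P$ in that cell). To establish realizability I would insert $v$ at its prescribed position $x_0$ in the order and read off its pattern along the vertical line $x=x_0$: the pseudolines meet this line in a definite bottom-to-top order, and the realizable patterns are exactly the prefixes of this order. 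The task is then to show that the side-pattern forced on $v$ by $\HH^+$ is such a prefix, which is where the ABA-freeness of the extended family and the complement-pair condition of Definition \ref{def:ext} enter, together with the fact that $P$ already contains a point in every cell so that every realizable pattern is witnessed. This consistency argument — equivalently, that a one-vertex extension of a realized pseudohalfplane hypergraph can be realized without disturbing $(P,\EE)$ — is the only non-formal ingredient; the realization theorem for pseudohalfplane hypergraphs recalled in Section \ref{sec:geomconseq} makes it plausible, but keeping the arrangement fixed is precisely what the saturation of $P$ set up above is designed to guarantee.
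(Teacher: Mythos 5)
Your first two steps follow the paper's own route: discretize $\EE$ by a point set $P$ with a representative in every cell, pass to the pseudohalfplane hypergraph $\HH$, transfer the triple-intersection hypothesis to $\CC'$, and apply Theorem \ref{thm:weakhellypsconvex}. All of that is sound. The gap is exactly at the step you yourself flag as the heart of the matter, and your proposed method for it fails: it is \emph{not} true that the side-pattern forced on the new abstract vertex $v$ is a prefix of the pseudoline order along a vertical line at $v$'s prescribed position, because the order-position of $v$ in a valid abstract extension need not correspond to the $x$-coordinate of any realizing point. Concretely, take $\ell_1: y=-x$ and $\ell_2: y=x$ with upward halfplanes $h_1,h_2$, and the four face points $W=(-5,0.2)$, $S=(-0.1,-5)$, $N=(0.1,5)$, $E=(5,0)$, so that in the $x$-order $W<S<N<E$ the hyperedges are $h_1\cap P=\{N,E\}$ and $h_2\cap P=\{W,N\}$. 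Insert $v$ between $N$ and $E$ with $v\in h_2^+$ and $v\notin h_1^+$. The extended family $\{\,\{N,E\},\{W,N,v\}\,\}$ is ABA-free (the differences are $\{E\}$ and $\{W,v\}$, and $E$ does not lie between $W$ and $v$), so this is a perfectly valid extension in the sense of Definition \ref{def:ext}; yet any point of the plane lying in $h_2$ but not in $h_1$ must have $x<0$, while your prescribed vertical lines have $x_0\in(0.1,5)$. So the pattern of $v$ is not a prefix along any admissible vertical line; and since you use Theorem \ref{thm:weakhellypsconvex} as a black box, your argument must handle such extensions. Saturating $P$ further with points on edges and vertices of the arrangement does not repair this: the same phenomenon occurs with one point in every cell of every dimension (insert $v$ with the same pattern just to the right of the crossing).

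What is true — and is the real content of the transfer step — is that the pattern of $v$ in any valid extension is realizable \emph{somewhere} in the plane with respect to the fixed arrangement; but this is a global statement that does not follow from a local, order-based consistency check. The paper proves it by realizing the extended hypergraph $\HH'$ abstractly (the realization theorem of \cite{abafree}, Proposition A.1) and then invoking Theorem \ref{thm:unique}: since $P$ has one point in each face, \emph{every} realization of $\HH$ has boundary arrangement combinatorially equivalent to that of $\EE$, so the image of $v$ in the new realization can be pulled back to a point of the original plane lying in every member of $\CC$. Your saturation of $P$ is exactly the hypothesis this uniqueness theorem needs, but saturation alone, without Theorem \ref{thm:unique} or an equivalent, does not close the gap, as the example above shows; your proposal never invokes it, so the crucial step remains unproven (and the particular argument sketched for it is incorrect).
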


To be able to state the next theorem we define $p$ being strongly inside the convex hull of $P$ in the geometric setting if in every extension of $\EE$ with an additional pseudohalfplane we still have that $p\in Conv(P)$ with respect to this extended family. Similarly, $p$ is an extremal vertex of $P$ if there exists an extension of $\EE$ by an additional pseudohalfplane such that $p\notin Conv(P\setminus \{p\})$ with respect to this extended family.

\begin{thm}[Carath\'eodory's theorem for pseudoconvex sets in the plane]\label{thm:pseudocaratheodoryplane}
	Given a finite family of pseudohalfplanes $\EE$. Let $P'$ be a set of points in the plane and $\CC$ be a subfamily of the convex subsets of the plane with respect to $\EE$. If $p$ is strongly inside the convex hull of $P'$ (with respect to $\EE$) then there exists a $P''\subseteq P'$, $|P''|\le 3$ such that $p\in Conv(P'')$ and every point of $P''$ is an extremal vertex (with respect to $\EE$).
\end{thm}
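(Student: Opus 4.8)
The plan is to reduce the statement to its combinatorial counterpart, Theorem \ref{thm:pseudocaratheodory}, by passing to the pseudohalfplane hypergraph that $\EE$ induces on the relevant finite point set. Concretely, I would set $S=P'\cup\{p\}$ and let $\HH$ be the hypergraph on $S$ whose hyperedges are the traces $E\cap S$ for $E\in\EE$; by the realization result of \cite{abafree} recalled in Section \ref{sec:geomconseq}, $\HH$ is a pseudohalfplane hypergraph on $S$. The whole proof then consists of matching the three geometric notions in the statement (membership in a convex hull, being strongly inside a convex hull, being an extremal vertex) with their combinatorial analogues in $\HH$, feeding the matched hypotheses into Theorem \ref{thm:pseudocaratheodory}, and translating the conclusion back. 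This is the same mechanism used for the results where a point outside the ground set is involved and illustrated by the proof of Theorem \ref{thm:weakhellypsconvexplane}, except that here the conclusion only refers to points already lying in $S$, so no new point needs to be realized in the plane. (The family $\CC$ in the statement plays no role in the argument.)

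The routine match is that of hull membership: for any $T\subseteq S$ and any $q\in S$ one has $q\in Conv_\EE(T)$ if and only if $q\in Conv_\HH(T)$, since both assert that every pseudohalfplane, respectively every hyperedge (i.e.\ every trace of a pseudohalfplane), that contains $T$ also contains $q$, and for subsets of $S$ these containments are unaffected by restricting to $S$. This immediately yields the final conclusion $p\in Conv_\EE(P'')$ from the combinatorial $p\in Conv_\HH(P'')$.

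The substantive step is to translate the two extension-based notions. First I would observe that a single added region already suffices to witness $p\notin Conv(P')$: if $p$ fails to lie in the hull after adding several pseudohalfplanes (resp.\ hyperedges), then one added region $E'$ satisfies $P'\subseteq E'$ and $p\notin E'$, and since any subfamily of a pseudohalfplane family (resp.\ hypergraph) is again one, adding only $E'$ already removes $p$. Hence both the geometric and the combinatorial ``strongly inside'' reduce to the impossibility of a single separating region, and by Claim \ref{claim:strongconvex} the combinatorial version is equivalent to $p$ not being an extremal vertex of $\HH=\HH[P'\cup\{p\}]$. The key lemma I then must prove is that a combinatorial separating cut can always be realized geometrically as an extension of the \emph{fixed} family $\EE$: whenever a hyperedge $H$ can be added to $\HH$ while remaining a pseudohalfplane hypergraph, with $P'\subseteq H$ and $p\notin H$, a pseudohalfplane $E'$ with $E'\cap S=H$ can be added to $\EE$ while remaining a pseudohalfplane family. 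This amounts to inserting a new pseudoline into the given arrangement with a prescribed trace on $S$, which I would establish by the sweeping/insertion argument underlying the realization theorem of \cite{abafree}. Granting it, geometric ``strongly inside'' implies combinatorial ``strongly inside'' via the contrapositive above, and the identical argument applied to $\HH[P']$ shows that combinatorial extremality in $P'$ implies geometric extremality with respect to $\EE$.

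With these translations the theorem follows quickly. Assuming $p$ is geometrically strongly inside $Conv(P')$, it is combinatorially strongly inside in $\HH$, so Theorem \ref{thm:pseudocaratheodory} applied with $v=p$ and $S'=P'$ yields $P''\subseteq P'$ with $|P''|\le 3$, $p\in Conv_\HH(P'')$, and every vertex of $P''$ extremal in $\HH[P']$. The hull-membership match turns $p\in Conv_\HH(P'')$ into $p\in Conv_\EE(P'')$, and the extremality translation makes every point of $P''$ an extremal vertex with respect to $\EE$, which is exactly the assertion. I expect the insertion lemma of the previous paragraph to be the only genuine obstacle; the remaining matches are essentially definitional.
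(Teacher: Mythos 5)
Your reduction skeleton matches the paper's (pass to the trace hypergraph, feed the matched hypotheses into Theorem \ref{thm:pseudocaratheodory}, translate back), and your definitional matches (hull membership, the observation that a single added region suffices to witness non-containment, Claim \ref{claim:strongconvex}) are fine. But the step you yourself flag as ``the only genuine obstacle'' is a genuine gap, and your proposed way of closing it does not work. Your insertion lemma asks to add a pseudohalfplane to the \emph{fixed} family $\EE$ realizing a prescribed combinatorially-addable trace on $S=P'\cup\{p\}$. The ``sweeping/insertion argument underlying the realization theorem of \cite{abafree}'' cannot deliver this: that argument builds a realization of the abstract hypergraph \emph{from scratch}, and since the sparse set $S$ does not determine the arrangement, the fresh realization of $\HH\cup\{H\}$ may have a boundary arrangement wildly different from that of $\EE$; there is then no way to transfer the new pseudohalfplane back into $\EE$. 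The whole difficulty of the theorem is concentrated in exactly this transfer, so as written your proof essentially assumes the hard part.

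The paper closes this gap with two ingredients that are absent from your proposal. First, it does \emph{not} work on $P'\cup\{p\}$ but on a superset $P$ containing at least one point in every face of the arrangement of the boundary pseudolines of $\EE$; this is what makes Theorem \ref{thm:unique} (uniqueness of realization) applicable, which says that \emph{every} realization of the trace hypergraph on such a face-hitting point set is combinatorially equivalent to $\EE$. With that, the paper can realize the abstractly extended hypergraph, forget the new hyperedge, identify the remaining realization with $\EE$ via Theorem \ref{thm:unique}, and thereby read off the new pseudohalfplane as an extension of $\EE$ itself. (Your insertion lemma over the sparse $S$ is in fact true, but the only proof available in this framework goes through precisely this route: lift the combinatorial extension from $S$ to the face-hitting $P$ via Lemma \ref{lem:extensiondual}, realize, and apply Theorem \ref{thm:unique}; note also the paper's warning that Theorem \ref{thm:unique} fails for loose arrangements, which shows this transfer is delicate rather than a routine perturbation of the realization argument.) So your proposal is not wrong in its plan, but it is incomplete at its load-bearing step, and completing it forces you back onto the paper's machinery.
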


Note that there is no need in the above theorem to have a superset $P$ of $P'$ (like we had in Theorem \ref{thm:pseudocaratheodory}) as while in the combinatorial setting $P$ serves as the surrounding space, here the pseudohalfplanes are in the plane and so the plane itself serves as the surrounding space (as seen in the proof later in Section \ref{sec:geom}).
Note also that due to the construction on Figure \ref{fig:cara} it is not enough to assume that $p\in Conv(P)$. 

\begin{thm}[Radon's theorem for pseudoconvex sets in the plane]\label{thm:radonplane}
	Given a finite family of pseudohalfplanes $\EE$. Let $P'$ be a set of points in the plane with $|P'|=4$. There is a partition of $P'$ into two subsets such that the convex hulls of these subsets (with respect to $\EE$) intersect.
\end{thm}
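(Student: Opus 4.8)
The plan is to derive the geometric statement from the combinatorial Radon theorem (Theorem~\ref{thm:radon}) by realizing the abstract Radon vertex it produces as an actual point of the plane. First I would pass to a pseudohalfplane hypergraph that records the whole arrangement: let $Q$ be the set obtained from $P'$ by adding one point in the interior of every cell of the arrangement underlying $\EE$ (finitely many, since $\EE$ is finite), and let $\HH_Q$ be the hypergraph induced by $\EE$ on $Q$, that is, with hyperedges $E\cap Q$ for $E\in\EE$. By the realizability result of \cite{abafree} recalled in Section~\ref{sec:geomconseq}, $\HH_Q$ is a pseudohalfplane hypergraph on the ordered vertex set $Q$. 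Applying Theorem~\ref{thm:radon} with $S=Q$ and $S'=P'$ yields a partition $P'=A\sqcup B$ together with an extension $\HH_Q^+$ of $\HH_Q$ by a single new vertex $v$ such that $v\in Conv(A)$ and $v\in Conv(B)$, both computed in $\HH_Q^+$.

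Next I would read off what this says geometrically. Since the extension is by a vertex, its hyperedges $H^+$ are in bijection with the hyperedges $H$ of $\HH_Q$ and satisfy $H^+\cap Q=H$; therefore $A\subseteq H^+$ precisely when $A\subseteq E$ for the corresponding pseudohalfplane $E\in\EE$, and similarly for $B$. Consequently the combinatorial conclusion is exactly that $v$ is forced to lie in every hyperedge whose pseudohalfplane contains $A$ and in every hyperedge whose pseudohalfplane contains $B$. Now comes the realization: the membership pattern prescribed for $v$ is a consistent sign vector with respect to the boundary pseudolines of $\EE$, and because we sampled one point in every cell, this pattern must coincide with the pattern of some representative $q\in Q$. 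For that $q$ we then have $q\in E$ for every $E\in\EE$ with $A\subseteq E$ or $B\subseteq E$, which is precisely the statement that $q\in Conv(A)\cap Conv(B)$ with respect to $\EE$; hence the two convex hulls meet and the theorem follows.

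The step I expect to be the main obstacle is this last realization, namely the assertion that the abstract extension vertex of Theorem~\ref{thm:radon} always matches an actual cell of the given arrangement $\EE$ and not merely a cell of some other arrangement realizing the same hypergraph. Sampling a point in every cell is the device that makes this transparent: once $\HH_Q$ records all cells of $\EE$, the only membership patterns that keep the hypergraph a pseudohalfplane hypergraph after adding one vertex are the patterns already present among the points of $Q$, so no ``phantom'' cell can be created. Making this precise is exactly the content of the combinatorial-to-geometric dictionary proved in Section~\ref{sec:geom}: a subfamily of pseudohalfplanes has a common point in the plane if and only if the corresponding hyperedges can simultaneously be made to contain a new vertex while the hypergraph stays a pseudohalfplane hypergraph. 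Applying this equivalence to the subfamily $\{E\in\EE:\ A\subseteq E\text{ or }B\subseteq E\}$ turns the combinatorial extendability granted by Theorem~\ref{thm:radon} directly into the required common point of $Conv(A)$ and $Conv(B)$, and with it the whole proof reduces to Theorem~\ref{thm:radon} together with that dictionary.
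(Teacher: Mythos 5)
Your proposal is correct and takes essentially the same approach as the paper: the paper likewise samples a point in every face of the arrangement of $\EE$, applies Theorem \ref{thm:radon} to the induced pseudohalfplane hypergraph, and then transfers the abstract Radon vertex back to the plane using the realizability result of \cite{abafree} together with the uniqueness theorem (Theorem \ref{thm:unique}). The only cosmetic difference is in the last step, where the paper realizes the extended hypergraph $\HH'$ geometrically, forgets the image of $v$, and invokes Theorem \ref{thm:unique} to conclude that this image is a point of (an arrangement equivalent to) $\EE$ lying in both convex hulls, whereas you phrase the same transfer as $v$'s membership pattern matching that of a sampled point of $Q$; both formulations rest on exactly the same dictionary, which, as you correctly note, is the content of Section \ref{sec:geom}.
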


Besides the geometric counterparts of our results, another interesting geometric consequence is a short proof of Levi's enlargement lemma, which follows easily from Lemma \ref{lem:discretelevi}, a discrete version of Levi's enlargement lemma, whose proof is also quite short.

\section{Properties of pseudohalfplane hypergraphs}\label{sec:extremal}

\subsection{Extensions of pseudohalfplane hypergraphs}

From now on whenever we define a pseudohalfplane hypergraph as the subfamily of $\FF\cup\bar{\FF}$ for some hypergraph $\FF$, then implicitly we assume that $\FF$ is ABA-free.

We show two very useful lemmas about extending pseudohalfplane hypergraphs:

\begin{lem}[Extension with a vertex]\label{lem:extension}
	Given a pseudohalfplane hypergraph $\HH$ and a subhypergraph $\HH'\subseteq\HH$ on vertex set $S$. Suppose that $\HH'^+$ is an extension of $\HH'$ to an additional vertex $v$. Then we can also extend $\HH$ to $v$ to get $\HH^+$ so that $\HH'^+$ is a subhypergraph of $\HH^+$.
\end{lem}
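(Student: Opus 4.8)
The plan is to reduce the statement to a purely order-theoretic fact about extending a single ABA-free family by one vertex. Recall that the smallest ABA-free witness that $\HH=\TT\cup\BB$ is a pseudohalfplane hypergraph is $\FF=\TT\cup\bar\BB$. To extend $\HH$ to $v$ it suffices to extend $\FF$ by $v$ to an ABA-free $\FF^+$: declaring a topset $T^+$ to contain $v$ iff the corresponding $F=T\in\FF$ does, and a bottomset $B^+$ to contain $v$ iff the corresponding $\bar B\in\FF$ does not, yields a pseudohalfplane extension $\HH^+$, the complement-pair condition of Definition~\ref{def:ext} being automatic since a complement pair of $\HH$ corresponds to a single element of $\FF$. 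I would first apply the same reduction to $\HH'$: passing to the minimal witness $\FF'^+=\TT'^+\cup\overline{\BB'^+}$ of $\HH'^+$ (which is ABA-free, being a subfamily of any witness of $\HH'^+$), one checks that $\FF'^+$ restricts on $S$ to $\FF'=\TT'\cup\bar{\BB'}\subseteq\FF$ and that the $v$-decisions it encodes on the elements of $\FF'$ are exactly those of $\HH'^+$. Thus it is enough to find an ABA-free extension $\FF^+$ of $\FF$ to $v$ whose decisions agree with $\FF'^+$ on every element of $\FF'$.

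Fix the position of $v$ (inherited from $\FF'^+$) and encode an extension of $\FF$ by the set $U\subseteq\FF$ of those $F$ with $v\in F^+$. The decisive observation is that ABA-freeness is a condition on pairs of hyperedges, so $\FF^+$ is ABA-free iff every pair $\{F^+,G^+\}$ is; moreover if $F$ and $G$ receive the same decision the pair cannot create a new violation, since the common behaviour at $v$ keeps $v$ out of both set differences and any bad triple would then already lie in $S$. Hence only mixed pairs matter, and I define a \emph{forcing relation} on $\FF$ by $A\to B$ exactly when $\{A\cup\{v\},B\}$ is an ABA-violating pair, i.e.\ when putting $v$ into $A$ but not into $B$ is forbidden. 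With this notation $\FF^+$ is ABA-free precisely when $U$ is closed under $\to$, meaning $A\in U$ and $A\to B$ imply $B\in U$. Since $\to$ is defined pairwise, the same characterisation applied to $\FF'$ shows that the decision set $U'\subseteq\FF'$ coming from $\FF'^+$ is $\to$-closed within $\FF'$.

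The key lemma is that $\to$ is \emph{transitive}. Granting this, the proof finishes by pure order theory: set $U:=U'\cup\{F\in\FF:\exists F'\in U',\,F'\to F\}$. Transitivity makes $U$ closed under $\to$, and if some $F'\in\FF'$ lay in $U\setminus U'$ it would satisfy $F''\to F'$ for some $F''\in U'$, whence $F'\in U'$ because $U'$ is $\to$-closed in $\FF'$; thus $U\cap\FF'=U'$. Therefore $U$ defines an ABA-free extension $\FF^+$ agreeing with $\FF'^+$ on $\FF'$, which translates back to the desired pseudohalfplane extension $\HH^+$ with $\HH'^+\subseteq\HH^+$.

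The main obstacle is proving transitivity of $\to$, and this is exactly where ABA-freeness of the whole family is used. Writing out the three ways $A\to B$ can arise ($B\setminus A$ meeting both sides of $v$, or on one side a vertex of $B\setminus A$ lying closer to $v$ than some vertex of $A\setminus B$), I would verify that $A\to B$ and $B\to C$ imply $A\to C$ by a finite case analysis on the positions of the witnessing vertices relative to $v$, repeatedly invoking that $\{A,B,C\}$ contains no ABA-pattern on $S$ to exclude the configurations in which $A\to C$ could fail; indeed the naive counterexamples to transitivity are precisely the triples that are not ABA-free. Alternatively, one can bypass the case analysis using the equivalence between ABA-free hypergraphs and upwards pseudohalfplane arrangements from \cite{abafree}: in any such realisation the boundary pseudolines are $x$-monotone, so along the vertical line through $v$ they acquire a total height order, and $A\to B$ holds iff the boundary of $A$ is at least as high as that of $B$ there, making $\to$ a total preorder and in particular transitive.
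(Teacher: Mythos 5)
Your architecture is sound, and at its core it is a reorganization of the paper's own proof. The paper performs the same reduction to the ABA-free case and then extends the hyperedges of $\HH\setminus\HH'$ one at a time; its inductive step assumes that both decisions for a hyperedge $H$ fail, i.e.\ that there are already-extended hyperedges $H_1\ni v$ and $H_2\not\ni v$ with $H_1\to H$ and $H\to H_2$ in your notation, and derives a contradiction using both ABA-freeness of the pairs on $S$ and ABA-freeness of the pair $(H_1^+,H_2^+)$, i.e.\ $\neg(H_1\to H_2)$. That is exactly the contrapositive of your transitivity lemma, and the paper proves it by the very case analysis (middle/side position of $v$ in the two witnessing patterns, four cases up to symmetry) that you sketch as your primary route. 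So your forcing relation, the pairwise characterization of ABA-free extensions, and the closure $U=U'\cup\{F\in\FF:\exists F'\in U',\,F'\to F\}$ are a clean and correct way to organize the bookkeeping (the paper's greedy induction computes this closure implicitly), but the combinatorial crux --- transitivity --- is identical in both proofs, and you have not carried it out; note also that the case analysis must invoke not only ABA-freeness of $\{A,B,C\}$ on $S$ but also the assumed failure $\neg(A\to C)$, as the paper's does.

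The genuine defect is your proposed shortcut for transitivity. The equivalence ``$A\to B$ iff the boundary of $A$ is at least as high as that of $B$ on the vertical line through $v$'' is false: only the forward implication holds (and it holds in every realization). For a counterexample to the converse, take $S=\{1,2\}$ with $1<v<2$, $A=\{1\}$, $B=\{2\}$: neither $A\to B$ nor $B\to A$ holds, since no three vertices of $S\cup\{v\}$ can form an alternation, yet in any fixed realization one boundary lies above the other at $v$. In particular $\to$ is not a total preorder, and the fact that $\to$ is contained in the total preorder given by heights says nothing about transitivity of $\to$ itself. Attempting to repair this argument runs into circularity: from $\neg(A\to C)$ you obtain a realization of the pair $\{A\cup\{v\},C\}$ only, whereas to contradict $A\to B$ and $B\to C$ you would need a realization of the whole triple $\{A,B,C\}$ in which $A$'s boundary passes below $C$'s at $v$ --- and producing a realization of a family compatible with a prescribed partial extension is essentially the lemma you are proving. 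So the geometric alternative must be discarded; your proposal is complete only once the case analysis of your first route is executed, at which point it coincides with the paper's proof.
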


\begin{proof}
	Notice that it is enough to show this when $\HH$ and $\HH'^+$ are ABA-free hypergraphs and we want that $\HH^+$ is also ABA-free. Indeed, each pseudohalfplane hypergraph by definition has an underlying ABA-free hypergraph. Applying the statement to this ABA-free hypergraph we get the statement also for the original pseudohalfplane hypergraph.
	
	Thus, from now on we can suppose that $\HH$ is an ABA-free hypergraph. Denote by $
	\HH''=\HH\setminus \HH'$.
	We proceed by induction, 
	let $H\in \HH''$ be an arbitrary hyperedge in $\HH''$. We want to extend $H$ to $v$ to get a hyperedge $H^+$ on vertex set $S'=S\cup\{v\}$ such that $\HH'^+\cup\{H^+\}$ is a pseudohalfplane hypergraph. Clearly, repeating this step one by one for every hyperedge in $\HH''$ (always extending the hypergraph that we got in the previous step) we get the desired hypergraph $\HH^+$.
	
	There are two possible extensions of $H$, either $v$ is in $H^+$ or not in $H^+$. Assume on the contrary that both extensions ruin ABA-freeness. Thus, when not adding $v$ to $H$ we get an occurrence of ABA on $H^+$ and some other hyperedge $H_1\in \HH'$. The vertex $v$ must be one of the three vertices of this occurrence, otherwise $\HH'^+$ would not have been ABA-free. Similarly, adding $v$ to $H$ we also get an occurrence of ABA on $H^+$ and some other hyperedge $H_2\in \HH'$ s.t. $v$ must be one of the three vertices of this occurrence. 
	
	Overall, up to symmetry and possibly taking the complement of every hyperedge we have some cases depending on which vertex of the 3-3 vertices of the above two ABA-occurrences is $v$ in the vertex order (the middle one or one of the two side ones). See Figures \ref{fig:extension},\ref{fig:extension2},\ref{fig:extension3},\ref{fig:extension4} for the rest of the proof, in every figure the dots (corresponding to containment) and circles (corresponding to non-containment) are numbered according to the order they are deducted, leading to an ABA-occurrence (a contradiction) with hyperedges and vertices with blue labels. Except for the first case, the easy details are left to the reader. 
	
	\begin{enumerate}
		\item Middle + Side case. There are vertices $a<v<b$ s.t. $H\cap \{a,b\}=\{a,b\}$ and $H_1\cap \{a,v,b\}=\{v\}$. There are vertices $c<d<v$ s.t. $H\cap \{c,d\}=\{c\}$ and $H_2\cap \{c,d,v\}=\{d\}$.
		
		 Notice that $a\ne d$ as only one of them is in $H$.  Now $b\in H_2$ otherwise there would be an ABA-occurrence on $c,d,b$ and on $H,H_2$. Also $a\notin H_2$ otherwise there would be an ABA-occurrence on $a,v,b$ and $H_1,H_2$. Also $d\in H_1$ otherwise there would be an ABA-occurrence on $d,v,b$ and $H_1,H_2$. Now if $d<a$ then there is an ABA-occurrence on $c,d,a$ and $H,H_2$ while if $a<d$ then there is an ABA-occurrence on $a,d,b$ and $H,H_1$. Both cases lead to a contradiction.		
		 
		 \item Middle + Middle case. There are vertices $a<v<b$ s.t. $H\cap \{a,b\}=\{a,b\}$ and $H_1\cap \{a,v,b\}=\{v\}$. There are vertices $c<v<d$ s.t. $H\cap \{c,d\}=\emptyset$ and $H_2\cap \{c,d,v\}=\{c,d\}$.
		 
		 \item Side + Same Side case. There are vertices $a<b<v$ s.t. $H\cap \{a,b\}=\{b\}$ and $H_1\cap \{a,b,v\}=\{a,v\}$. There are vertices $c<d<v$ s.t. $H\cap \{c,d\}=\{c\}$ and $H_2\cap \{c,d,v\}=\{d\}$.
		 
		 \item Side + Opposite Side case. There are vertices $a<b<v$ s.t. $H\cap \{a,b\}=\{b\}$ and $H_1\cap \{a,b,v\}=\{a,v\}$. There are vertices $v<c<d$ s.t. $H\cap \{c,d\}=\{d\}$ and $H_2\cap \{v,c,d\}=\{c\}$.
	\end{enumerate}
\end{proof}

\begin{figure}
	\begin{center}
		\includegraphics{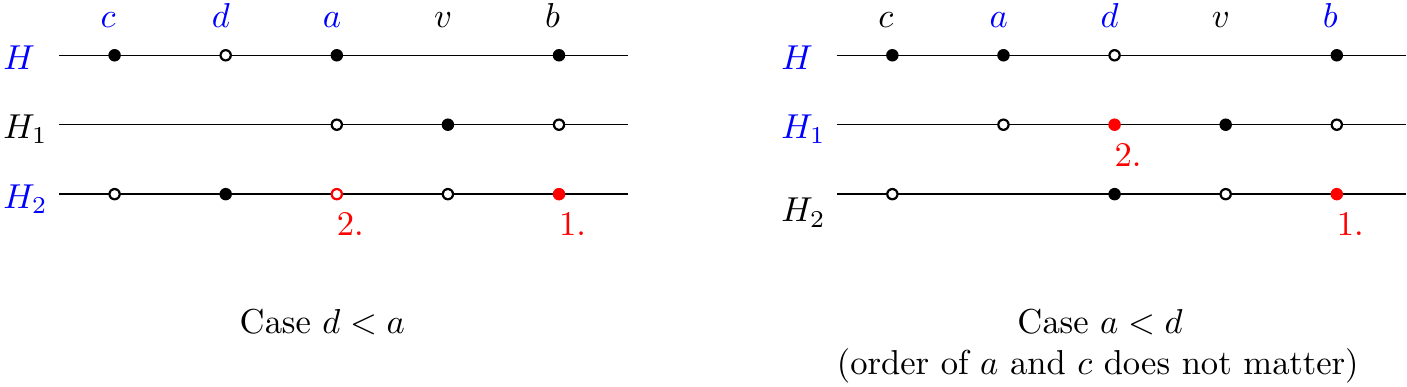}
		\caption{Lemma \ref{lem:extension}, Middle + Side case.}
		\label{fig:extension}
	\end{center}
\end{figure} 

\begin{figure}
	\begin{center}
		\includegraphics{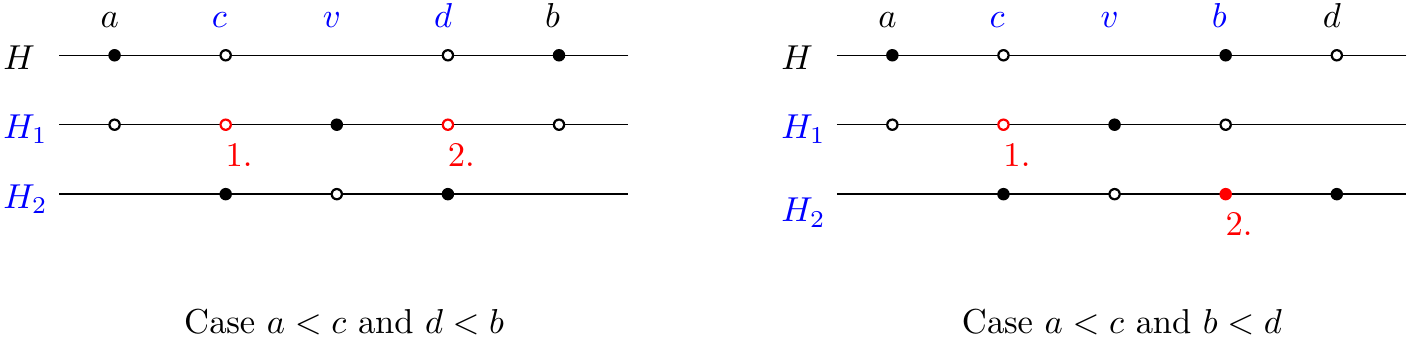}
		\caption{Lemma \ref{lem:extension}, Middle + Middle case.}
		\label{fig:extension2}
	\end{center}
\end{figure} 

\begin{figure}
	\begin{center}
		\includegraphics{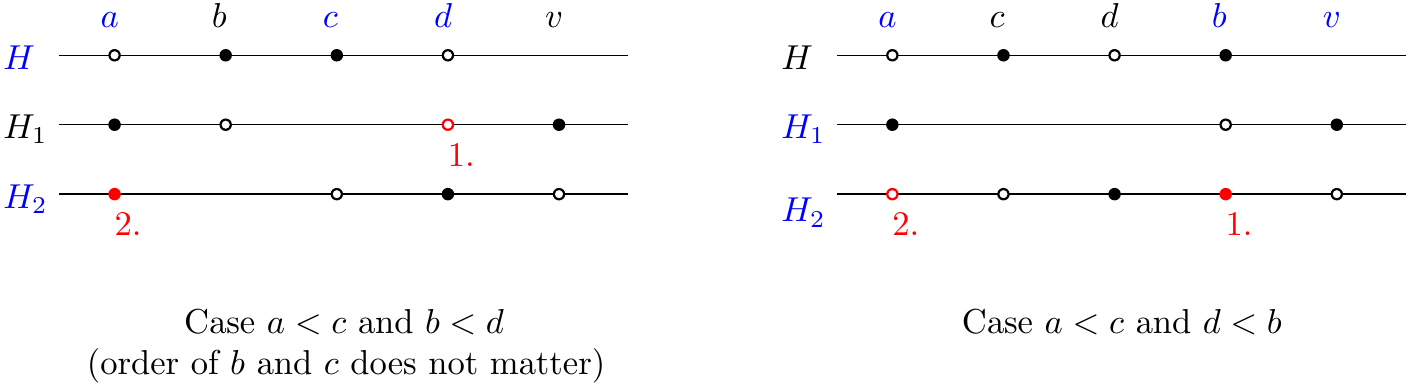}
		\caption{Lemma \ref{lem:extension}, Side + Same Side case.}
		\label{fig:extension3}
	\end{center}
\end{figure} 

\begin{figure}
	\begin{center}
		\includegraphics{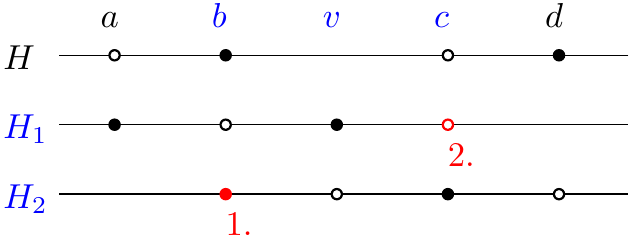}
		\caption{Lemma \ref{lem:extension}, Side + Opposite Side case.}
		\label{fig:extension4}
	\end{center}
\end{figure} 

We note that this extension is usually not unique (e.g., when $\HH'$ and $\HH'^+$ are both empty and $\HH$ contains only one hyperedge).

Applying Lemma \ref{lem:extension} to the dual of the underlying ABA-free hypergraph $\FF$, which is also ABA-free, we get (note that we can get one lemma from the other by reversing the roles of vertices and hyperedges):

\begin{lem}[Extension with a hyperedge]\label{lem:extensiondual}
	Given a pseudohalfplane hypergraph $\HH$ on vertex set $S$ and a subset $S'\subseteq S$ of the vertices. Let $\HH'$ be the subhypergraph induced by $S'$. Suppose that we can add a hyperedge $H'$ to $\HH'$ so that $\HH'\cup\{H'\}$ is a pseudohalfplane hypergraph as well. Then we can extend $H'$ to some $H$ on $S$ (i.e., $H\cap S'=H'$) so that $\HH\cup\{H\}$ is a pseudohalfplane hypergraph as well.
\end{lem}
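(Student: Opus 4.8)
The plan is to obtain this lemma as the formal dual of Lemma~\ref{lem:extension}, using the fact (from \cite{abafree}) that the dual of an ABA-free hypergraph, under a suitable ordering of its vertex set, is again ABA-free. As in the proof of Lemma~\ref{lem:extension}, I would first reduce to the ABA-free setting: writing $\FF$ for the underlying ABA-free hypergraph of $\HH$ and handling bottomsets by passing to complements exactly as there, it suffices to prove the following ABA-free version. Given an ABA-free $\FF$ on the ordered vertex set $S$, a subset $S'\subseteq S$, and a set $H'\subseteq S'$ such that $\FF[S']\cup\{H'\}$ is ABA-free, there is a set $H\subseteq S$ with $H\cap S'=H'$ and $\FF\cup\{H\}$ ABA-free.

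Second, I would set up the dual dictionary explicitly. Let $\FF^*$ be the dual of $\FF$: its vertices are the hyperedges of $\FF$ (ordered so that $\FF^*$ is ABA-free), and for each $s\in S$ it has the hyperedge $D_s=\{F\in\FF : s\in F\}$. Under this dictionary, restricting the ground set of $\FF$ to $S'$ corresponds to passing to the subfamily of hyperedges $(\FF^*)'=\{D_s : s\in S'\}$ of $\FF^*$; that is, the induced subhypergraph $\FF[S']$ dualizes to a subhypergraph $(\FF^*)'\subseteq\FF^*$. Adding a new hyperedge $H$ to $\FF$ corresponds to adding a new vertex $x$ to $\FF^*$, with $x\in D_s$ exactly when $s\in H$. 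The prescription $H\cap S'=H'$ then becomes a prescribed extension $((\FF^*)')^{+}$ of $(\FF^*)'$ by the vertex $x$ (put $x$ into $D_s$ for $s\in S'$ precisely when $s\in H'$), and the hypothesis that $\FF[S']\cup\{H'\}$ is ABA-free becomes the statement that $((\FF^*)')^{+}$ is ABA-free.

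Third, with the dictionary in place the proof is immediate: I apply the ABA-free instance of Lemma~\ref{lem:extension} to the ABA-free hypergraph $\FF^*$, with subhypergraph $(\FF^*)'$ and its extension $((\FF^*)')^{+}$ to the new vertex $x$. The lemma produces an ABA-free extension $(\FF^*)^{+}$ of all of $\FF^*$ to $x$ that contains $((\FF^*)')^{+}$. Reading $(\FF^*)^{+}$ back through the dual, it is the dual of $\FF\cup\{H\}$ for a set $H$ with $H\cap S'=H'$, and ABA-freeness transfers, giving the desired $H$ and hence the lemma.

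The main thing to get right is the bookkeeping of this dictionary rather than any substantial new idea, and two points deserve care. First, the reduction to the ABA-free case must be carried out as in Lemma~\ref{lem:extension}, in particular fixing a single underlying ABA-free family compatibly for $\HH$ and $\HH[S']$ and reducing the case of a bottomset $H'$ to that of a topset by taking complements. Second, one must check that the dual correspondence is exact for ABA-freeness in both directions: distinct hyperedges $F,F'\in\FF$ that coincide after restriction to $S'$ become distinct vertices of $\FF^*$, so $\FF[S']$ and $(\FF^*)'$ differ only by such duplications, and duplicated vertices never create or destroy an ABA-pattern, so ABA-freeness is preserved when translating both the hypothesis and the conclusion across the dual. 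Once these are verified, applying Lemma~\ref{lem:extension} as above completes the proof.
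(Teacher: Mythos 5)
Your proposal is correct and takes essentially the same approach as the paper: its own proof of Lemma~\ref{lem:extensiondual} is exactly to apply Lemma~\ref{lem:extension} to the dual of the underlying ABA-free hypergraph $\FF$, which is again ABA-free by \cite{abafree}, with the roles of vertices and hyperedges reversed. Your write-up simply makes explicit the dualization dictionary (the reduction to the ABA-free case, the handling of duplicated vertices, and the transfer of ABA-freeness across the dual) that the paper leaves implicit.
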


Using these extension lemmas we can already prove the following discrete version of Levi's enlargement lemma: 

\begin{lem}[Discrete Levi's enlargement lemma]\label{lem:discretelevi}
	Given a pseudohalfplane hypergraph $\HH$ and a pair of its vertices, $p$ and $q$. Let $\HH'$ be the hypergraph we get by adding a new vertex $p'$ next to (i.e., immediately after or before) $p$ and a new vertex $q'$ next to $q$ in the vertex-order. We add $p'$ (resp. $q'$) to a hyperedge if and only if it contains $p$ (resp. $q$). Then there exists a subset $X$ of the vertices such that $X\cap \{p,p',q,q'\}=\{p,q\}$ and $\HH'\cup\{X\}$ is a pseudohalfplane hypergraph.
\end{lem}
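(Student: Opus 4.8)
The plan is to reduce everything to the four vertices $p,p',q,q'$ and then use Lemma \ref{lem:extensiondual} to spread the new hyperedge over all of $S$. First I would record that $\HH'$ is itself a pseudohalfplane hypergraph, since Lemma \ref{lem:extensiondual} will be applied to it. If $\FF$ is an ABA-free hypergraph witnessing that $\HH$ is a pseudohalfplane hypergraph, let $\FF'$ be obtained from $\FF$ by the same duplication (a set $F'\in\FF'$ contains $p'$ exactly when it contains $p$, and $q'$ exactly when it contains $q$). Then $\HH'\subseteq \FF'\cup\bar{\FF'}$, and $\FF'$ is ABA-free: in any hypothetical ABA-occurrence $x<y<z$ the pair $p,p'$ (and likewise $q,q'$) cannot supply two of $x,y,z$, because they are consecutive in the order and have identical membership in every set, so an outer vertex and an adjacent middle vertex could never lie in opposite symmetric differences; deleting the primed copies then transports the occurrence to one in $\FF$, a contradiction.

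Next I would restrict to $S'=\{p,p',q,q'\}$. Since $p'$ (resp. $q'$) lies in a hyperedge exactly when $p$ (resp. $q$) does, every trace $H\cap S'$ is one of $\emptyset,\{p,p'\},\{q,q'\},\{p,p',q,q'\}$, so the induced subhypergraph $\HH'[S']$ is a subfamily of these four sets. I then want to add the single set $H'=\{p,q\}$ and verify that $\HH'[S']\cup\{H'\}$ is a pseudohalfplane hypergraph. Assuming without loss of generality $p<q$, take
\[
\FF'=\{\,\emptyset,\ \{p,p'\},\ \{p,p',q,q'\},\ \{p,q\}\,\}.
\]
Its complement family contains $\{p,p',q,q'\},\{q,q'\},\emptyset,\{p',q'\}$, so $\FF'\cup\bar{\FF'}$ contains all four possible traces together with $\{p,q\}$, hence it contains $\HH'[S']\cup\{H'\}$. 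It remains to check that $\FF'$ is ABA-free: the sets $\emptyset$ and $\{p,p',q,q'\}$ can never participate in a forbidden $A$-$B$-$A$ pattern, so the only pair to examine is $\{p,p'\}$ versus $\{p,q\}$, whose two symmetric differences are the singletons $\{p'\}$ and $\{q\}$, and a singleton cannot provide the two outer vertices of such a pattern.

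Finally, I would apply Lemma \ref{lem:extensiondual} to the pseudohalfplane hypergraph $\HH'$, the subset $S'=\{p,p',q,q'\}$, its induced subhypergraph $\HH'[S']$, and the new hyperedge $H'=\{p,q\}$. The lemma produces a hyperedge $X$ on the whole vertex set with $X\cap S'=\{p,q\}$, i.e. $X\cap\{p,p',q,q'\}=\{p,q\}$, such that $\HH'\cup\{X\}$ is a pseudohalfplane hypergraph, which is exactly the claim.

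The only genuinely delicate point is uniformity over the internal orders of the two duplicated pairs: $p'$ may precede or follow $p$, and similarly for $q',q$. The construction above deliberately sidesteps this case analysis, because the single potentially dangerous pair $\{p,p'\}$ versus $\{p,q\}$ has singleton symmetric differences, so no ABA pattern can arise however the four vertices are linearly ordered (only the coarse order $p<q$ is used, and $q<p$ is handled by swapping roles). I expect the step asserting that duplication preserves ABA-freeness to need the most careful phrasing, but it is routine.
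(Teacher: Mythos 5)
Your proof is correct and takes essentially the same route as the paper's: restrict to the four vertices $\{p,p',q,q'\}$, observe that adding $\{p,q\}$ to the induced subhypergraph still gives a pseudohalfplane hypergraph, and then invoke Lemma \ref{lem:extensiondual} to extend $\{p,q\}$ to the required set $X$ on the whole vertex set. The only difference is that you make explicit two points the paper leaves implicit (that duplication preserves ABA-freeness, so $\HH'$ is itself a pseudohalfplane hypergraph, and a concrete ABA-free witness for the four-vertex family in place of the paper's ``trivially ABA-free''), which is added care rather than a different argument.
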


\begin{proof}
	The theorem follows easily from Lemma \ref{lem:extensiondual}. Let $\HH^-$ be the induced subhypergraph that $\HH'$ induces on $\{p,p',q,q'\}$. Observe that every hyperedge either contains none of $p$ and $p'$ or both of them. The same holds for $q$ and $q'$. Thus if we add $\{p,q\}$ to $\HH^-$ to get the hypergraph $\HH^-_X$ then $\HH^-_X$ is trivially an ABA-free hypergraph and so it is also a pseudohalfplane hypergraph. Now we can apply Lemma \ref{lem:extensiondual} on $\HH'$ to get the subset of the vertices $X$ such that $X\cap \{p,p',q,q'\}=\{p,q\}$ and $\HH'\cup\{X\}$ is a pseudohalfplane hypergraph, as required.
\end{proof}

\begin{remark}
	While Lemma \ref{lem:extensiondual} provided an efficient way to construct an $X$ to prove Lemma \ref{lem:discretelevi}, in case $\HH$ is maximal there is also a more intuitive way to define $X$. Namely, if $\HH$ is maximal then every vertex different from $p$ and $q$ is strictly above or below $pq$ (for the definition of strictly above and below see Definition \ref{def:orientation}.). Let $X$ be the set of vertices that are strictly above $p$ and $q$ in $\HH$, plus $p$ and $q$. The proof that this choice of $X$ is as required is a case analysis similar to the one in the proof of Lemma \ref{lem:extension} and is left to the interested reader. This also gives a second proof for Lemma \ref{lem:discretelevi}, as by greedily adding hyperedges one can make $\cH$ maximal and then the $X$ which is as required for this maximal hypergraph must be also as required for $\cH$.
\end{remark}

The exact argument why this can be regarded as a discrete version of Levi's well-known lemma can be found in Section \ref{sec:geom}. Roughly speaking, because in our setting there is no direct notion of a vertex being on the boundary of a hyperedge, as a workaround we can do the following. We can duplicate a vertex and then if a hyperedge contains exactly one of the copies of the vertex then it behaves as one whose boundary passes through the original vertex. In the above theorem we do this for both $p$ and $q$ and so we get a hyperedge that behaves as one whose boundary passes through both $p$ and $q$.

\subsection{Properties of the extremal vertices}

In the remainder of this section we are always given a pseudohalfplane hypergraph $\cH\subseteq \FF\cup\bar{\FF}$ on vertex set $S$ whose set of extremal vertices is denoted by $E(\HH)$. We recall several statements from \cite{kbdiscretehelly}. First, the following observation provides an equivalent definition for the extremal vertices:

\begin{claim}\label{claim:singleton} \cite{kbdiscretehelly}
	The topvertices of $\cH$ are exactly those vertices $v$ for which if we add the singleton hyperedge $\{v\}$ as a topset to $\FF\cup\bar{\FF}$, i.e. we add it to $\FF$, we still get a pseudohalfplane hypergraph. The bottomvertices are exactly those vertices $v$ for which if we add the singleton hyperedge $\{v\}$ as a bottomset to $\FF\cup\bar{\FF}$, i.e. we add it to $\bar{\FF}$ (that is, we add $S\setminus \{v\}$ to $\FF$), we still get a pseudohalfplane hypergraph. 
	
	In other words, the extremal vertices are exactly those vertices that can be separated from the rest of the vertices by a (possibly additional) hyperedge.\footnote{Note that in the geometric setting of halfplanes this wording would give the extreme vertices instead of the vertices that lie on the boundary of the convex hull.}
\end{claim}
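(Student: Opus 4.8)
The plan is to reduce the whole statement to one clean equivalence about ABA-free hypergraphs and then transfer it to the top/bottom setting by complementation. The fact I would isolate first is: for an ABA-free hypergraph $\FF$ and a vertex $v$, the family $\FF\cup\{\{v\}\}$ is ABA-free if and only if $v$ is unskippable in $\FF$. Since by definition the topvertices of $\HH$ are exactly the unskippable vertices of $\FF$, the topvertex half of the claim is then immediate: adding $\{v\}$ as a topset means adding it to $\FF$, and the resulting hypergraph is a pseudohalfplane hypergraph witnessed by $\FF\cup\{\{v\}\}$ precisely when this witness stays ABA-free, i.e.\ precisely when $v$ is a topvertex.

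For one direction of the core equivalence I would argue that if $v$ is skippable then some $A\in\FF$ satisfies $\min(A)<v<\max(A)$ and $v\notin A$; then $\min(A),\max(A)\in A\setminus\{v\}$ while $v\in\{v\}\setminus A$, and since $\min(A)<v<\max(A)$ this is exactly a forbidden ABA-occurrence on the pair $A,\{v\}$, so $\FF\cup\{\{v\}\}$ is not ABA-free. For the reverse direction, suppose $\FF\cup\{\{v\}\}$ is not ABA-free; as $\FF$ itself is ABA-free, every forbidden occurrence must use the new edge $\{v\}$. A singleton cannot play the role of the edge carrying the two outer vertices $x<z$ of the pattern, so $\{v\}$ must carry the middle vertex $y$; this forces $y=v$ and the partner $A\in\FF$ to satisfy $x,z\in A$ with $x<v<z$ and $v\notin A$, whence $\min(A)\le x<v<z\le\max(A)$, i.e.\ $A$ skips $v$. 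Thus $\FF\cup\{\{v\}\}$ fails ABA-freeness exactly when $v$ is skippable.

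To obtain the bottomvertex half I would apply the same equivalence to $\bar\FF$, which is ABA-free as recalled in the excerpt. By definition the bottomvertices are the unskippable vertices of $\bar\FF$, so $v$ is a bottomvertex iff $\bar\FF\cup\{\{v\}\}$ is ABA-free. Since taking complements preserves ABA-freeness and $\overline{\FF\cup\{S\setminus\{v\}\}}=\bar\FF\cup\{\{v\}\}$, this is the same as asking that $\FF\cup\{S\setminus\{v\}\}$ be ABA-free, which is exactly the operation of adding $\{v\}$ to $\bar\FF$, i.e.\ adding $S\setminus\{v\}$ to $\FF$. This is precisely the bottomset assertion of the claim. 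Finally, the closing reformulation is just the union of the two characterizations: the extremal vertices are the top- and bottomvertices together, and what we proved says $v$ is extremal iff either $\{v\}$ can be added as a topset or $S\setminus\{v\}$ can be added as a topset (equivalently $\{v\}$ as a bottomset), that is, iff $v$ can be separated from the rest of the vertices by a possibly additional hyperedge.

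The only genuinely delicate point is the role-assignment step, namely recognizing that in any newly created forbidden occurrence the singleton must be the middle edge and can never be the outer one; but this is forced by the singleton having a single element, so I expect no real obstacle. The remaining work is purely bookkeeping of the top/bottom duality through complementation, which the preservation of ABA-freeness under complements handles cleanly.
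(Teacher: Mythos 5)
Your proof is correct: the key equivalence (that $\FF\cup\{\{v\}\}$ is ABA-free iff $v$ is unskippable in $\FF$, since a singleton can only play the middle role in an ABA-occurrence) is argued cleanly, and the reduction of the bottomset half via $\overline{\FF\cup\{S\setminus\{v\}\}}=\bar\FF\cup\{\{v\}\}$ and preservation of ABA-freeness under complementation is exactly right. Note that this paper does not prove Claim~\ref{claim:singleton} itself but cites it from the companion paper \cite{kbdiscretehelly}; your argument is the direct definitional one that such a proof must take, so there is nothing to flag.
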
 

%
%
%

%
\begin{obs} \cite{kbdiscretehelly}
	The leftmost and rightmost vertices of $\cH$ are both topvertices and bottomvertices and so they are always extremal vertices. 
\end{obs}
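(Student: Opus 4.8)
The plan is to argue directly from the definition of skippable vertices, using the observation that unskippability of the two extreme vertices of $S$ is a property of the linear order alone and is insensitive to the particular hypergraph under consideration. Let $v_{\min}$ and $v_{\max}$ denote the leftmost and rightmost vertices of $S$, i.e.\ the minimum and maximum in the given ordering of the whole vertex set.

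First I would unwind the definition for $v_{\min}$. Recall that in a hypergraph $\mathcal{G}$ on $S$ a vertex $a$ is skippable exactly when some $A\in\mathcal{G}$ satisfies $\min(A)<a<\max(A)$ and $a\notin A$. For $a=v_{\min}$ the condition $\min(A)<v_{\min}$ can never hold, since $v_{\min}$ is the least element of $S$ and hence of every $A\subseteq S$; thus no hyperedge skips $v_{\min}$, so $v_{\min}$ is unskippable. Symmetrically, for $a=v_{\max}$ the condition $v_{\max}<\max(A)$ can never hold, so $v_{\max}$ is unskippable as well.

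The key point is that this argument used nothing about $\mathcal{G}$ beyond the ambient order on $S$, so it applies verbatim to both $\FF$ and $\bar{\FF}$. Taking $\mathcal{G}=\FF$ shows that $v_{\min}$ and $v_{\max}$ are unskippable in $\FF$, i.e.\ they are topvertices; taking $\mathcal{G}=\bar{\FF}$, which is ABA-free and hence a legitimate choice, shows they are also unskippable in $\bar{\FF}$, i.e.\ bottomvertices. Since by definition the extremal vertices are the union of the topvertices and the bottomvertices, both $v_{\min}$ and $v_{\max}$ lie in $E(\HH)$.

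There is no genuine obstacle here: the statement is essentially a direct reading of the definitions, and the only thing worth flagging explicitly is that the skippability test for the global extremes of $S$ is vacuously negative no matter which ABA-free witness one uses, which is precisely why the same one-line argument simultaneously yields the topvertex and the bottomvertex conclusions.
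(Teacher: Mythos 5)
Your proof is correct: the skippability condition $\min(A)<a<\max(A)$ is vacuously violated for the global minimum and maximum of the vertex order, regardless of which hypergraph ($\FF$ or $\bar{\FF}$) one tests, so both endpoints are simultaneously topvertices and bottomvertices, hence extremal. The paper only recalls this observation from its companion paper without reproving it, and your argument is exactly the direct definitional one that statement admits, so there is nothing to add.
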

\begin{claim} \cite{kbdiscretehelly}
	Every hyperedge of $\cH$ intersects the set of extremal vertices.
\end{claim}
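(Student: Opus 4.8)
The plan is to reduce the claim directly to Lemma \ref{lem:unskippable}, the statement that every hyperedge of an ABA-free hypergraph contains an unskippable vertex. Recall that by definition $\cH\subseteq \FF\cup\bar{\FF}$ with $\FF$ ABA-free, and that the extremal vertices $E(\HH)$ are exactly the union of the topvertices (the unskippable vertices of $\FF$) and the bottomvertices (the unskippable vertices of $\bar{\FF}$). So to show that an arbitrary hyperedge meets $E(\HH)$ it suffices to locate an unskippable vertex of the appropriate ABA-free family inside it.

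First I would fix an arbitrary hyperedge $H\in\cH$. Since $H$ lies in $\FF\cup\bar{\FF}$, it falls into (at least) one of two cases: either $H\in\FF$, so that $H$ is a topset, or $H\in\bar{\FF}$, so that $H$ is a bottomset. In the first case, applying Lemma \ref{lem:unskippable} to the ABA-free hypergraph $\FF$ produces an unskippable vertex of $\FF$ lying in $H$; by definition this is a topvertex, and hence belongs to $E(\HH)$. In the second case I would invoke the fact recorded earlier in the excerpt that $\bar{\FF}$ is again ABA-free, so Lemma \ref{lem:unskippable} applied to $\bar{\FF}$ yields an unskippable vertex of $\bar{\FF}$ inside $H$, which is by definition a bottomvertex and thus also lies in $E(\HH)$. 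Either way $H\cap E(\HH)\neq\emptyset$, as claimed.

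There is essentially no real obstacle here beyond bookkeeping: the only thing to be careful about is matching each hyperedge with the ABA-free family that witnesses its unskippable vertex, testing a topset for unskippability inside $\FF$ and a bottomset inside $\bar{\FF}$, and not conflating the two notions. Once the right family is chosen, the conclusion is immediate from Lemma \ref{lem:unskippable} together with the definition of top- and bottomvertices.
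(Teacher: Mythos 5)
Your proof is correct and is exactly the intended argument: the paper cites this claim from \cite{kbdiscretehelly} without reproducing a proof, and the natural proof there is precisely your reduction, splitting $H\in\FF\cup\bar{\FF}$ into the topset and bottomset cases and applying Lemma \ref{lem:unskippable} to $\FF$ (resp.\ to the ABA-free $\bar{\FF}$) to find a topvertex (resp.\ bottomvertex) in $H$. Nothing is missing.
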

\begin{claim} \label{convexsizeisminthree}\cite{kbdiscretehelly}
	If the vertex set $S$ has size $n\ge 3$, then the extremal vertex set contains at least $3$ vertices.
\end{claim}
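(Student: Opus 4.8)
The plan is to start from the two extremal vertices we already get for free and then hunt for a third. Writing $S=\{s_1<s_2<\dots<s_n\}$, the leftmost vertex $s_1$ and the rightmost vertex $s_n$ are both top- and bottomvertices by the observation preceding the claim, so they are extremal; since $n\ge 3$ there is at least one middle vertex, and it suffices to exhibit a single extremal vertex in $\{s_2,\dots,s_{n-1}\}$. I would argue by contradiction, assuming that $s_1$ and $s_n$ are the only extremal vertices, i.e.\ that every middle vertex is neither a topvertex nor a bottomvertex.

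The main engine is two applications of Lemma~\ref{lem:unskippable}, one to $\FF$ and one to $\bar{\FF}$. First, if some nonempty hyperedge $A\in\FF$ avoids both $s_1$ and $s_n$, then $A\subseteq\{s_2,\dots,s_{n-1}\}$ and Lemma~\ref{lem:unskippable} produces a topvertex inside $A$, i.e.\ a middle extremal vertex, a contradiction. Dually, if some hyperedge $A$ satisfies $\{s_1,s_n\}\subseteq A\subsetneq S$, then $\bar A$ is a nonempty hyperedge of the ABA-free family $\bar{\FF}$ avoiding both ends, and Lemma~\ref{lem:unskippable} applied to $\bar{\FF}$ yields a middle bottomvertex. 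Hence I may assume the structural property $(\ast)$ that every hyperedge of $\FF$ other than $\emptyset$ and $S$ contains \emph{exactly one} of $s_1,s_n$ (if there is no such hyperedge at all, then nothing is skipped and every vertex is extremal). Splitting the hyperedges into the families $\FF_1$ (containing $s_1$ but not $s_n$) and $\FF_n$ (containing $s_n$ but not $s_1$), a third application of the same idea lets me assume that $s_1$ is the only topvertex contained in any member of $\FF_1$ and $s_n$ the only one in any member of $\FF_n$; otherwise that extra topvertex is again a middle vertex and we are done.

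With this structure in hand I would reach a contradiction by exhibiting a forbidden ABA pattern. The model case is $n=3$: a middle vertex $s_2$ that is non-top forces $\{s_1,s_3\}\in\FF$ (the only hyperedge that can skip $s_2$ under $(\ast)$), while being non-bottom forces $\{s_2\}\in\FF$, and the pair $\{s_1,s_3\},\{s_2\}$ is exactly a forbidden ABA on $s_1<s_2<s_3$. For general $n$ the same collision should appear between a witness in $\FF_1$ that skips $s_2$ (it must contain $s_1$ and reach right past $s_2$) and a witness in $\FF_n$ that skips $s_{n-1}$ (it must contain $s_n$ and reach left past $s_{n-1}$); the prototype to keep in mind is that $\{s_1,s_3\}$ and $\{s_2,s_4\}$ already cross. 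I expect this last step to be the main obstacle: a single pair consisting of one member of $\FF_1$ and one of $\FF_n$ need not cross, since their inner parts may be nested in an ABA-free way, so the witnesses must be chosen carefully. The natural choice is the reach-maximal members of $\FF_1$ and $\FF_n$ together with the consequences of non-bottomness, which pin the two reaches down to $s_{n-1}$ and $s_2$ respectively and should force the interleaving. The clean conceptual reading of the obstacle is that $(\ast)$ is the combinatorial shadow of all points being collinear, a degenerate situation in which every point lies on the hull and is therefore extremal; the work is precisely to turn this geometric intuition into an explicit three-vertex ABA.
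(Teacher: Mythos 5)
The paper does not actually prove this claim here --- it is imported from \cite{kbdiscretehelly} --- so your attempt has to be judged on its own correctness. Your preparatory steps are sound: the endpoints $s_1,s_n$ are extremal by the cited observation; your two applications of Lemma~\ref{lem:unskippable} (to $\FF$ and to the ABA-free family $\bar{\FF}$) correctly yield property $(\ast)$, that under the contradiction hypothesis every hyperedge of $\FF$ other than $\emptyset$ and $S$ contains exactly one of $s_1,s_n$; and your $n=3$ case is a complete proof of the base case.

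The genuine gap is the last step, which you flag yourself, and your proposed way of closing it would in fact fail. The four ``natural'' witnesses you name --- an $\FF_1$-member skipping $s_2$, an $\FF_n$-member skipping $s_{n-1}$, and the non-bottomness witnesses forcing $\FF_1$ to reach $s_{n-1}$ and $\FF_n$ to reach $s_2$ --- can be simultaneously realized with no ABA among them, so no pairwise argument on these sets can produce the contradiction. Concretely, on $s_1<\dots<s_6$ take $A=\{s_1,s_3\}$ (skips $s_2$), $A'=\{s_3,s_6\}$ (skips $s_5$), $B=\{s_1,s_2,s_3,s_5\}$ (contains $s_5$, and $\bar B=\{s_4,s_6\}$ skips $s_5$, so it witnesses non-bottomness of $s_5$ and gives $\FF_1$ reach $s_5$), and $B'=\{s_2,s_3,s_5,s_6\}$ (contains $s_2$, and $\bar{B'}=\{s_1,s_4\}$ skips $s_2$). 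All four satisfy $(\ast)$, and for every one of the six pairs one of the two set-differences is $\emptyset$, $\{s_1\}$ or $\{s_6\}$, so no three-vertex ABA pattern can be formed between any two of them. The contradiction in such a configuration only materializes when one brings in the conditions at further interior vertices (here: $s_3$ admits no skipper compatible with these four sets, and $s_3,s_4$ admit no non-bottom witnesses), and which interior vertex does the job depends on the configuration. This is why the proof cannot be finished by one clever choice of two (or four) witnesses; it needs a mechanism that propagates the hypothesis across all middle vertices, e.g.\ an induction or a minimal-counterexample argument.

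There is a clean repair using a tool proved in this paper: induct on $n$ with your $n=3$ argument as the base case. If only $s_1,s_n$ were extremal and $n>3$, then $s_2$ is neither a topvertex nor a bottomvertex, so by Lemma~\ref{lem:deletetop} (and its bottomvertex analogue, which the paper notes holds by the same argument) deleting $s_2$ changes neither the topvertex set nor the bottomvertex set; hence $\HH[S\setminus\{s_2\}]$ is a counterexample on $n-1\ge 3$ vertices, contradicting minimality. Be aware, though, that Lemma~\ref{lem:deletetop} is itself proved by a nontrivial ABA-chasing argument, i.e.\ it carries essentially the combinatorial content that your sketch leaves open.
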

%

%
\begin{obs}[Topvertices in a topset]\label{obs:topintop}  \cite{kbdiscretehelly}
	If $X$ is a topset and $x,y\in X$, then $X$ contains all topvertices that are between $x$ and $y$. The same holds with bottomvertices if $X$ is a bottomset.
\end{obs}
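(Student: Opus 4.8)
The plan is to unwind the definitions directly, since the statement turns out to be an essentially immediate reformulation of unskippability. Recall that a topset $X$ is by definition a hyperedge of the underlying ABA-free hypergraph $\FF$ (as $X\in\TT=\HH\cap\FF$), and that the topvertices are precisely the unskippable vertices of $\FF$. I would therefore argue by contradiction, directly against the definition of skippable.

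First I would set up notation: without loss of generality take $x<y$, and let $v$ be any topvertex with $x<v<y$. Suppose for contradiction that $v\notin X$. The key observation is then simply that $\min(X)\le x<v<y\le\max(X)$, so in particular $\min(X)<v<\max(X)$ while $v\notin X$. By the definition of skippable this says exactly that $X$ skips $v$, so $v$ is a skippable vertex of $\FF$. This contradicts the hypothesis that $v$ is a topvertex, i.e.\ unskippable in $\FF$. Hence $v\in X$, which is the claim.

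For the bottomvertex half I would run the identical argument with $\bar{\FF}$ in place of $\FF$: a bottomset is a hyperedge of $\bar{\FF}$ (as $X\in\BB=\HH\cap\bar{\FF}$), the bottomvertices are the unskippable vertices of $\bar{\FF}$, and $\bar{\FF}$ is itself ABA-free, so the same three lines give the conclusion.

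There is no real obstacle here; the only thing worth flagging is conceptual rather than technical, namely recognizing that the observation is a direct consequence of the definition of unskippability and does not actually invoke the ABA-free property in any essential way (ABA-freeness enters only implicitly, by guaranteeing that $\FF$ and $\bar{\FF}$ are the correct objects whose unskippable vertices are the top- and bottomvertices). The entire content is the inequality $\min(X)<v<\max(X)$ forced by $x,y\in X$, which immediately makes any omitted vertex between them skippable.
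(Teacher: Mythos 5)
Your proof is correct: since a topset is by definition a hyperedge of $\FF$ and topvertices are exactly the unskippable vertices of $\FF$, any omitted vertex strictly between two members of $X$ would be skipped by $X$, which is the whole content of the observation. The paper states this as an Observation cited from its companion paper precisely because the intended argument is this same immediate unwinding of the definitions (and your remark that ABA-freeness plays no essential role here is also accurate).
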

\begin{obs}[Bottomvertex in a topset]\label{obs:topindown}  \cite{kbdiscretehelly}
	If $X$ is a topset and $x\in X$ is a bottomvertex, then $X$ contains all vertices that are bigger or all vertices that are smaller than $x$. The same holds if $X$ is a bottomset and $x\in X$ is a topvertex.
\end{obs}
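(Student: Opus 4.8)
The plan is to unwind the definitions and argue by contradiction, using directly the fact that the bottomvertices are exactly the unskippable vertices of $\bar\FF$. Recall that a topset $X$ is by definition a hyperedge of $\TT=\HH\cap\FF$, so $X\in\FF$ and hence its complement $\bar X$ lies in $\bar\FF$; and recall that $x$ being a bottomvertex means precisely that $x$ is unskippable in $\bar\FF$, i.e.\ there is no $B\in\bar\FF$ with $\min(B)<x<\max(B)$ and $x\notin B$.

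First I would assume the conclusion fails, so that $X$ omits some vertex $w<x$ and also some vertex $u>x$. Since $w,u\notin X$ and $X\in\FF$, both $w$ and $u$ lie in $\bar X\in\bar\FF$, while $x\in X$ gives $x\notin\bar X$. Consequently $\min(\bar X)\le w<x<u\le\max(\bar X)$, so $\bar X$ is a hyperedge of $\bar\FF$ that skips $x$. This contradicts the assumption that $x$ is a bottomvertex, i.e.\ unskippable in $\bar\FF$. Hence $X$ must contain every vertex smaller than $x$ or every vertex larger than $x$, as claimed.

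For the second statement, about a bottomset $X$ and a topvertex $x$, I would invoke the symmetry of the setup: since $\bar\FF$ is ABA-free whenever $\FF$ is, and top/bottom are defined by interchanging the roles of $\FF$ and $\bar\FF$, the identical argument applies with the two swapped — now $X\in\bar\FF$ forces $\bar X\in\FF$, and a topvertex is one unskippable in $\FF$.

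I do not expect any genuine obstacle, as the statement is essentially a direct reading of the definition of a skippable vertex. The only point that needs care is the bookkeeping of complements: a topset lives in $\FF$, whereas the skippability condition controlling a bottomvertex is phrased in $\bar\FF$, so the two omitted vertices witnessing a failure of the conclusion must first be recast as elements of $\bar X\in\bar\FF$ before the unskippability of $x$ can be invoked.
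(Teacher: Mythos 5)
Your proof is correct and is the natural definitional argument: the complement of the topset $X$ lies in $\bar\FF$, and two omitted vertices straddling $x$ would make $\bar X$ skip $x$, contradicting that a bottomvertex is unskippable in $\bar\FF$ (and symmetrically for the second claim). The paper itself only cites this observation from its companion paper without reproducing a proof, but your argument is exactly the intended one, so no further comparison is needed.
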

Let $T=(t_1=v_1,t_2,\dots, t_k=v_n)$ and $B=(b_1=v_1,b_2,\dots, b_l=v_n)$ be the sets of top and bottom vertices ordered according to the ordering on $S$. Call $T$ to be the upper hull and $B$ the lower hull. Note that a vertex may appear in both sets. Let us give the following circular order on $E(\HH)$, the set of extremal vertices: $(v_1,t_2,\dots, t_{k-1},v_n, b_{l-1},\dots, b_2)$\footnote{This circular order corresponds to the clockwise order of points on the convex hull in the geometric case defined by halfplanes. Also, $T$ corresponds to the upper hull and $B$ to the lower hull in the geometric case. Note that a vertex may appear twice in this circular order of $E(\HH)$.}.

\begin{lem}\label{lem:hullinterval} \cite{kbdiscretehelly}
	Every hyperedge of $\HH$ intersects the extremal vertex set in an interval of the circular order defined on the extremal vertex set.
\end{lem}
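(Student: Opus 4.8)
The plan is to reduce to the case where the hyperedge $H$ is a topset, since the bottomset case follows by the symmetric (``same holds'') parts of Observation~\ref{obs:topintop} and Observation~\ref{obs:topindown}. For a topset $H$ I would first record two structural facts. By Observation~\ref{obs:topintop}, applied to any two topvertices lying in $H$, the set $H\cap T$ is an interval $\{t_a,\dots,t_b\}$ of the upper hull. Next, for any \emph{interior} bottomvertex $b_r$ (that is, $b_r\neq v_1,v_n$) lying in $H$, Observation~\ref{obs:topindown} forces $H$ to contain every vertex larger than $b_r$ or every vertex smaller than $b_r$; since $v_n$ is the largest and $v_1$ the smallest vertex, this means $v_n\in H$ or $v_1\in H$. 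In other words, $H$ can contain an interior bottomvertex only if it already contains one of the two common endpoints $v_1,v_n$ (each of which is both a top- and a bottomvertex).

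With these in hand I would split into cases according to which of $v_1,v_n$ lie in $H$. If neither does, then $a>1$ and $b<k$, so by the forcing above $H$ contains no bottomvertex at all, and $H\cap E=\{t_a,\dots,t_b\}$ is already an arc of the circular order. If exactly $v_1\in H$, then every interior bottomvertex of $H$ must satisfy the ``smaller than'' alternative (the ``larger than'' one would put $v_n$ into $H$), so $H\cap B$ is a prefix $\{b_1=v_1,\dots,b_j\}$ of the lower hull; reading the circular order near $v_1$ as $\dots,b_2,v_1,t_2,\dots$, the bottom block $b_j,\dots,b_2,v_1$ and the top block $v_1,t_2,\dots,t_b$ meet at $v_1$ and form a single arc. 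The case where only $v_n\in H$ is handled symmetrically by reflecting the vertex order.

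The main obstacle is the remaining case where $H$ contains both $v_1$ and $v_n$, so that $H\cap T=T$ is the entire top half of the circle and the forcing above yields no information. Here I would instead show directly that the \emph{missing} bottomvertices $B\setminus H$ form an interval of the lower hull: if $b_p,b_q\notin H$ with $p<q$ while some $b_r\in H$ for $p<r<q$, then $b_r$ is an interior bottomvertex, and Observation~\ref{obs:topindown} places either $b_q$ or $b_p$ into $H$, a contradiction. Since on a cycle the complement of an interval is again an arc, and in this case $H\cap E=E\setminus(B\setminus H)$, we conclude that $H\cap E$ is an arc. This settles all cases; I expect the only genuinely delicate point to be the bookkeeping of the ``gluing at a shared endpoint'' in the circular order, which is precisely why isolating the two-endpoint case and arguing by complementation is the cleanest route.
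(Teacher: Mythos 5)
The paper itself contains no proof of Lemma~\ref{lem:hullinterval}: it is imported verbatim from \cite{kbdiscretehelly}, so there is no in-paper argument to compare yours against. Judged on its own merits, your proof is correct, and it is assembled from exactly the two ingredients the paper recalls from that same source, namely Observation~\ref{obs:topintop} (giving that $H\cap T$ is an interval of the upper hull) and Observation~\ref{obs:topindown} (giving your ``forcing'' fact that an interior bottomvertex inside a topset pulls $v_1$ or $v_n$ into $H$). The case split on $\{v_1,v_n\}\cap H$ is exhaustive, and each case is handled soundly, including the complementation trick in the two-endpoint case, which is indeed the cleanest way to dispose of the only configuration where the prefix/suffix structure gives no information.

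Two small points are worth making explicit. First, the paper's footnote warns that a vertex may appear \emph{twice} in the circular order (an interior vertex can be simultaneously a top- and a bottomvertex), so ``$H\cap E$ occupies an arc of positions'' and ``$H\cap E$ is the set of vertices lying on some arc'' could in principle diverge; your argument survives this, because in the case $v_1,v_n\notin H$ no vertex of $H\cap E$ can be an interior bottomvertex (your forcing fact would place $v_1$ or $v_n$ in $H$), while in the case $v_1,v_n\in H$ no missing bottomvertex can be a topvertex (since $T\subseteq H$), so in each case the two readings coincide. Second, in the case $v_1,v_n\notin H$ your notation $\{t_a,\dots,t_b\}$ presumes $H\cap T\neq\emptyset$; either invoke Lemma~\ref{lem:unskippable} (every topset, being a member of the ABA-free family $\FF$, contains a topvertex) or note that the empty set is trivially an interval. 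Both patches are immediate.
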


\begin{lem}\label{lem:consecutives} \cite{kbdiscretehelly}
	If a topset (resp. bottomset) $H\in \cH$ contains two bottomvertices (resp. topvertices) $p<q$ that are consecutive in the circular order of the extremal vertices, then $H$ contains every vertex $r$ with $p<r<q$.
\end{lem}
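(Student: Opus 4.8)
The plan is to prove the topset case in full and obtain the bottomset case by the symmetric argument (swap the roles of $\FF$ and $\bar\FF$, of topvertices and bottomvertices, and invoke the ``same holds'' halves of the cited observations). So suppose $H$ is a topset, hence $H\in\FF$, containing bottomvertices $p<q$ that are consecutive in the circular order of the extremal vertices.

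First I would unpack what consecutiveness means in the linear order. From the explicit circular order $(v_1,t_2,\dots,t_{k-1},v_n,b_{l-1},\dots,b_2)$, the bottomvertices occupy one contiguous arc (appearing in the reverse of their linear order), so two bottomvertices are adjacent in the circular order precisely when they are adjacent in $B=(b_1,\dots,b_l)$. Thus $p=b_i$ and $q=b_{i+1}$ for some $i$, and in particular \emph{no bottomvertex lies strictly between $p$ and $q$}. This is the only place the consecutiveness hypothesis enters.

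Next, assume for contradiction that some vertex $r_0$ with $p<r_0<q$ satisfies $r_0\notin H$. Applying Observation~\ref{obs:topindown} to the topset $H$ and its bottomvertex $p$ yields that $H$ contains all vertices greater than $p$ or all vertices smaller than $p$; the former alternative would force $r_0\in H$, so $H\supseteq\{r:r<p\}$. Applying the same observation to $q$, the alternative ``$H$ contains all vertices smaller than $q$'' would again force $r_0\in H$, so $H\supseteq\{r:r>q\}$. Combining the two, $\bar H\subseteq\{r:p<r<q\}$, and $\bar H\neq\emptyset$ since $r_0\in\bar H$.

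The crux is the final step. Since $H\in\FF$, its complement $\bar H$ is a hyperedge of $\bar\FF$, which is ABA-free; hence by Lemma~\ref{lem:unskippable} the set $\bar H$ contains an unskippable vertex of $\bar\FF$, that is, a bottomvertex. But $\bar H$ lies inside the open interval $(p,q)$, which contains no bottomvertex --- a contradiction. Therefore no such $r_0$ exists and $H$ contains every $r$ with $p<r<q$, as claimed. I expect the only real obstacle to be spotting this reduction: once the two dichotomy observations have squeezed $\bar H$ inside $(p,q)$, the ``interval'' statement collapses to the purely structural fact that a nonempty bottomset must contain a bottomvertex, after which the consecutiveness hypothesis closes the argument instantly.
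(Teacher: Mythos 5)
Your proof is correct and takes essentially the same route as the original: the paper only recalls this lemma from \cite{kbdiscretehelly} without reproducing the proof, and that proof is precisely your argument --- apply Observation~\ref{obs:topindown} at $p$ and at $q$ to squeeze $\bar H$ into the open interval $(p,q)$, then apply Lemma~\ref{lem:unskippable} to the nonempty $\bar H\in\bar{\FF}$ (which is ABA-free) to produce a bottomvertex strictly between $p$ and $q$, contradicting consecutiveness. Your reading of the hypothesis (no bottomvertex strictly between $p$ and $q$, i.e., consecutive along the bottom arc) is the intended one --- under the literal ``cyclically adjacent'' reading the statement would even fail when the only topvertices are $v_1,v_n$ --- and the symmetric reduction for bottomsets is fine.
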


\begin{claim}\label{claim:everyconsecutive} \cite{kbdiscretehelly}
	If a topset (resp. bottomset) $H\in \cH$ contains every bottomvertex (resp. topvertex) then it contains every vertex.
	
	If the hyperedge $H\in \cH$ contains every extremal vertex then it contains every vertex.
\end{claim}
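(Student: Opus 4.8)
The plan is to reduce the second statement to the first one, and to prove the first statement (for a topset) by directly filling in the gaps between consecutive bottomvertices with the help of Lemma \ref{lem:consecutives}; the bottomset version then follows by the top--bottom symmetry.

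First I would prove the first statement. Suppose $H$ is a topset containing every bottomvertex. Recall from the setup that the bottomvertices, written in the vertex order, are $B=(b_1=v_1,b_2,\dots,b_l=v_n)$, so they begin at the leftmost vertex $v_1$ and end at the rightmost vertex $v_n$; in particular $b_1\le r\le b_l$ for every vertex $r$. Take an arbitrary vertex $r$. If $r$ is itself a bottomvertex it lies in $H$ by assumption, so assume it is not; then there is an index $i$ with $b_i<r<b_{i+1}$. The two bottomvertices $b_i$ and $b_{i+1}$ are consecutive in the circular order of the extremal vertices (in that order the lower hull appears as the contiguous block $\dots,v_n,b_{l-1},\dots,b_2,v_1,\dots$, so consecutive elements of $B$ are adjacent in it), and $H$ is a topset containing both of them. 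Hence Lemma \ref{lem:consecutives} applies and yields $r\in H$. As $r$ was arbitrary, $H$ contains every vertex. The analogous statement, that a bottomset containing every topvertex contains every vertex, follows by exchanging the roles of top and bottom throughout.

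For the second statement, let $H$ be any hyperedge containing every extremal vertex. Since $\cH\subseteq\FF\cup\bar\FF$, the hyperedge $H$ is a topset or a bottomset. If it is a topset then it contains in particular every bottomvertex, so the first statement shows that it contains every vertex; if it is a bottomset, the symmetric form of the first statement applies. In either case $H$ contains all vertices of $S$.

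I do not expect a serious obstacle here, as everything reduces to the structural lemmas established earlier. The only point requiring care is verifying that two consecutive bottomvertices $b_i,b_{i+1}$ really are consecutive in the circular order of $E(\HH)$, so that Lemma \ref{lem:consecutives} is applicable; this is immediate from the explicit description of that circular order, together with the fact that $b_1=v_1$ and $b_l=v_n$ are the extreme vertices of $S$, so that the open intervals $(b_i,b_{i+1})$ cover precisely the vertices that are not bottomvertices.
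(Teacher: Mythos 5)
Your proof is correct, and it is the natural argument: reduce the second statement to the first via $\cH\subseteq\FF\cup\bar\FF$, and prove the first by noting that every non-bottomvertex lies strictly between two bottomvertices that are adjacent in the circular order (since $b_1=v_1$ and $b_l=v_n$ are the leftmost and rightmost vertices), so Lemma \ref{lem:consecutives} fills in all gaps. The paper itself states this claim without proof, citing \cite{kbdiscretehelly}, but its placement immediately after Lemma \ref{lem:consecutives} indicates exactly the derivation you gave, including your careful check that consecutive elements of $B$ are adjacent in the circular order of $E(\HH)$.
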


\begin{cor}\label{cor:convofextremal}
	Given a pseudohalfplane hypergraph $\HH$ on vertex set $S$, $Conv(E(\HH))=S$.
\end{cor}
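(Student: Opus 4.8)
The plan is to unwind the definition of the convex hull and then apply Claim~\ref{claim:everyconsecutive} essentially verbatim. By Definition~\ref{def:pseudoconvex}, we have $Conv(E(\HH))=\cap\{H : H\in\HH,\ E(\HH)\subseteq H\}$, with the convention $\cap\emptyset:=S$. Since every convex set is by construction a subset of $S$, the inclusion $Conv(E(\HH))\subseteq S$ is automatic, so the only thing to verify is the reverse inclusion $S\subseteq Conv(E(\HH))$, i.e.\ that every vertex of $S$ lies in the intersection defining the hull.

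First I would dispose of the degenerate case: if no hyperedge of $\HH$ contains all of $E(\HH)$, then the defining family is empty and the convention $\cap\emptyset:=S$ gives $Conv(E(\HH))=S$ immediately. Otherwise, let $H\in\HH$ be any hyperedge appearing in the intersection, so that $E(\HH)\subseteq H$, meaning $H$ contains every extremal vertex. Here I would invoke the second part of Claim~\ref{claim:everyconsecutive}, which asserts precisely that a hyperedge containing every extremal vertex must contain every vertex; hence $H=S$. Since this holds for each term of the intersection, every term equals $S$, and therefore $Conv(E(\HH))=\cap\{S,\dots,S\}=S$ as well.

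There is no real obstacle here: the statement is a direct corollary, and the entire content is supplied by Claim~\ref{claim:everyconsecutive}. The only point requiring any care is the bookkeeping around the empty-intersection convention, so that the claim is seen to hold uniformly whether or not a hyperedge containing all extremal vertices actually exists. Combining the two cases yields $Conv(E(\HH))=S$.
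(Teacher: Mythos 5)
Your proof is correct and matches the paper's (implicit) argument exactly: the corollary is stated immediately after Claim~\ref{claim:everyconsecutive} precisely because every hyperedge containing $E(\HH)$ must equal $S$ by that claim, so the defining intersection is $S$, with the convention $\cap\emptyset:=S$ covering the case where no such hyperedge exists. Nothing is missing.
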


While for simplicity we defined only the extremal vertices of the whole vertex set $S$ of the hypergraph $\HH$ (and denoted it by $E(\HH)$), we could naturally extend this and define the extremal vertices of a subset $S'$ of the vertices to be $E(\HH[S'])$. Corollary \ref{cor:convofextremal} immediately implies the following formal strengthening which is the Krein-Milman theorem for pseudoconvex sets:

\begin{cor}\label{cor:convofextremalsub}
	Given a pseudohalfplane hypergraph $\HH$ on vertex set $S$ and a subset $S'\subseteq S$ of the vertices, $Conv(E(\HH[S']))=Conv(S')$ in $\HH$, i.e., for any subset $S'$ its convex hull coincides with the convex hull of its extremal vertices.
\end{cor}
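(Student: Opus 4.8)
The plan is to deduce this directly from Corollary \ref{cor:convofextremal} applied to the induced subhypergraph $\HH[S']$. The first thing I would record is that $\HH[S']$ is itself a pseudohalfplane hypergraph: if $\HH\subseteq \FF\cup\bar{\FF}$ with $\FF$ ABA-free, then restricting every hyperedge to $S'$ preserves ABA-freeness (an ABA pattern $x<y<z$ in the restriction uses three vertices of $S'$ and would therefore already occur in $\FF$), and the complement of $F\cap S'$ inside $S'$ is $\bar F\cap S'$, so $\HH[S']\subseteq \FF[S']\cup\overline{\FF[S']}$. In particular $E(\HH[S'])$ is well defined and Corollary \ref{cor:convofextremal} is available for $\HH[S']$. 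I would also record the easy inclusion: since $E(\HH[S'])\subseteq S'$ and $Conv$ is monotone (a larger set is contained in fewer hyperedges, so its convex hull, being an intersection over a smaller family, is larger), we immediately get $Conv(E(\HH[S']))\subseteq Conv(S')$ in $\HH$.

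The content is therefore the reverse inclusion, and this is where Corollary \ref{cor:convofextremal} does the work. Applying it to $\HH[S']$ gives that the convex hull \emph{computed inside} $\HH[S']$ of $E(\HH[S'])$ equals $S'$. The hyperedges of $\HH[S']$ are the sets $H\cap S'$ with $H\in\HH$, and each of them is a subset of $S'$; hence the intersection of those that contain $E(\HH[S'])$ can equal the whole set $S'$ only if every such hyperedge already equals $S'$. Translating back to $\HH$: for every $H\in\HH$, if $E(\HH[S'])\subseteq H$ then $S'\subseteq H$. Combined with the trivial implication in the other direction (valid because $E(\HH[S'])\subseteq S'$), the two families $\{H\in\HH: E(\HH[S'])\subseteq H\}$ and $\{H\in\HH: S'\subseteq H\}$ coincide. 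Intersecting them gives exactly $Conv(E(\HH[S']))=Conv(S')$ in $\HH$, as claimed.

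I do not expect a genuine obstacle here; the statement really is an immediate corollary. The only step requiring care is the \emph{reinterpretation}: Corollary \ref{cor:convofextremal} speaks about the convex hull taken inside $\HH[S']$, whereas the conclusion concerns convex hulls taken inside the ambient $\HH$, so one must track the intersections with $S'$ cleanly to see that the two relevant hyperedge families of $\HH$ are literally the same (the degenerate case where both families are empty, giving $Conv(\cdot)=S$ by the $\cap\emptyset:=S$ convention, is handled uniformly by this identification). Once that bookkeeping is in place, the equality of the two intersections is automatic.
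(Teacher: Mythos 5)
Your proof is correct and takes essentially the same route as the paper: the easy inclusion $Conv(E(\HH[S']))\subseteq Conv(S')$ from monotonicity of $Conv$ together with $E(\HH[S'])\subseteq S'$, and the reverse inclusion by applying Corollary \ref{cor:convofextremal} to the induced subhypergraph $\HH[S']$. The paper's proof is a two-line version of exactly this argument; your extra bookkeeping (checking that $\HH[S']$ is a pseudohalfplane hypergraph and identifying the two hyperedge families $\{H\in\HH: E(\HH[S'])\subseteq H\}$ and $\{H\in\HH: S'\subseteq H\}$) only makes explicit what the paper leaves implicit.
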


Indeed, Corollary \ref{cor:convofextremal} implies $Conv(E(\HH[S']))\supseteq S'$ while $Conv(E(\HH[S']))\subseteq Conv(S')$ follows from $E(\HH[S'])\subseteq S'$ as $Conv$ is monotone for containment by definition.

We can define an above/below relation among triples of vertices in the following way:

\begin{defi}\label{def:orientation}
	Given three vertices $a<b<c$ in a pseudohalfplane hypergraph $\HH$, we say that 
	$b$ is \emph{above} $ac$, $c$ is \emph{below} $ab$ and $a$ is below $bc$ (resp. $b$ is below $ac$, $c$ is above $ab$ and $a$ is above $bc$) if on the induced subhypergraph defined by these three vertices $b$ is a topvertex (resp. bottomvertex). A vertex is \emph{strictly above} (resp.\ \emph{strictly below}) another two if it is above them but not below (resp.\ below them but not above).
\end{defi}

We note that by Claim \ref{convexsizeisminthree} at least one of the cases in the above definition holds.

\begin{obs}
	Given three vertices $a<b<c$ in a pseudohalfplane hypergraph $\HH$, $b$ is both above and below $ac$ if and only if $b\in Conv(\{a,c\})$.
\end{obs}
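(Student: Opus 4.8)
The plan is to reduce the statement to the pair of equivalences that $b$ is above $ac$ iff no topset of $\HH$ contains $a$ and $c$ but not $b$, and symmetrically that $b$ is below $ac$ iff no bottomset of $\HH$ contains $a$ and $c$ but not $b$. Granting these, the observation follows at once: $b$ is both above and below exactly when neither a topset nor a bottomset separates $b$ from $\{a,c\}$, i.e.\ when no hyperedge of $\HH$ contains $a,c$ while missing $b$, which by Definition \ref{def:pseudoconvex} is precisely $b\in Conv(\{a,c\})$. I work throughout in the induced subhypergraph $\HH[\{a,b,c\}]$, where by Claim \ref{convexsizeisminthree} all three vertices are extremal, so at least one of ``above'' and ``below'' holds (the note preceding the statement).

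One implication of each equivalence is immediate and needs no hypotheses: if a topset $H$ satisfies $a,c\in H$ and $b\notin H$ then, as $a<b<c$, the set $H$ skips $b$, so $b$ is skippable in the top family and hence is not a topvertex of $\HH[\{a,b,c\}]$, i.e.\ $b$ is not above $ac$; the bottomset case is symmetric. This already gives the direction ``both above and below $\Rightarrow b\in Conv(\{a,c\})$'', which is the easy half.

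The converse implications are the heart of the proof and the main obstacle. By Claim \ref{claim:singleton} applied to $\HH[\{a,b,c\}]$, the vertex $b$ fails to be a topvertex exactly when the restricted top family $\FF[\{a,b,c\}]$ contains the set $\{a,c\}$; such a member is either the restriction of a genuine topset of $\HH$ separating $b$ from $\{a,c\}$, or else the complement of a bottomset $B$ with $B\cap\{a,b,c\}=\{b\}$. The first alternative is the conclusion we want, so the difficulty is this ``complementary singleton'' $B$, which by itself contains neither $a$ nor $c$ and therefore does not directly contradict $b\in Conv(\{a,c\})$. To dispose of it I would observe that $\overline{B}$ already lies in $\FF$, so adjoining $\overline{B}$ to $\HH$ as a topset leaves $\FF$ unchanged and keeps $\HH$ a pseudohalfplane hypergraph; since $\overline{B}$ contains $a$ and $c$ but not $b$, this exhibits an admissible separating topset. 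Running the same argument with the roles of top and bottom exchanged shows $b$ is also a bottomvertex, and the two conclusions together give that $b$ is both above and below $ac$. The one point that must be handled with care is exactly this last passage from a merely admissible separating set to an actual hyperedge witnessing $b\notin Conv(\{a,c\})$, where the structural Observation \ref{obs:topindown} and the fact that the separating set can be taken to be present are invoked.
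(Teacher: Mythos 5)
Your forward direction (if $b$ is both above and below $ac$ then $b\in Conv(\{a,c\})$) is correct, and it is in fact the only direction of this observation that the paper ever invokes (the paper states the observation without proof and applies it only in that direction, e.g.\ in the proofs of Theorem \ref{thm:pseudocaratheodory} and Theorem \ref{thm:multiequi}). The genuine gap is in your converse, and it sits exactly at the point you flag yourself. When the member of $\FF[\{a,b,c\}]$ equal to $\{a,c\}$ arises as the complement of a bottomset $B$ with $B\cap\{a,b,c\}=\{b\}$, adjoining $\overline{B}$ to $\HH$ does not help: that produces an \emph{extension} of $\HH$, whereas $Conv(\{a,c\})$ in Definition \ref{def:pseudoconvex} is computed from the hyperedges actually present in $\HH$. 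Convex hulls can strictly shrink when admissible hyperedges are added; keeping these apart is precisely the point of the paper's distinction between ``$v\in Conv(S')$'' and ``$v$ is strongly inside the convex hull of $S'$'' (Claim \ref{claim:strongconvex}). So an ``admissible separating topset'' witnesses only that $b$ fails to be in the convex hull in some extension, not that $b\notin Conv(\{a,c\})$ in $\HH$; Observation \ref{obs:topindown} has no bearing on this, and ``the separating set can be taken to be present'' is an unjustified assumption, not a fact.

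Moreover, no argument can close this gap, because the implication you are trying to prove is false for general pseudohalfplane hypergraphs. Take $S=\{a,b,c\}$ with $a<b<c$ and let $\HH$ consist of the topset $\{a,b,c\}$ and the bottomset $\{b\}$; a witnessing ABA-free family is $\FF=\{\{a,b,c\},\{a,c\}\}$. The only hyperedge of $\HH$ containing both $a$ and $c$ is $\{a,b,c\}$, so $b\in Conv(\{a,c\})$. But $\{a,c\}\in\FF$ skips $b$, so $b$ is not a topvertex of $\HH[\{a,b,c\}]$, i.e.\ $b$ is not above $ac$ (re-designating $\{b\}$ as a topset merely exchanges which of ``above''/``below'' fails). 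The root cause is that ``above'' and ``below'' are conditions on $\FF$, which necessarily contains the complements of all bottomsets, while $Conv$ sees only the hyperedges of $\HH$ itself; the two notions coincide only when $\HH$ is closed under complementation (as for the hypergraph of all halfplanes on a point set, the setting of the remark following the observation), in which case your argument does go through because $\overline{B}$ really is a hyperedge. So your reduction in the first paragraph --- that $b$ is above $ac$ iff no topset of $\HH$ contains $a,c$ but not $b$ --- is itself the false step: only its forward implication holds, and the observation, read literally for arbitrary pseudohalfplane hypergraphs, is an equivalence only under such an additional closure hypothesis.
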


Note that if the hypergraph is defined by all halfplanes on a planar set of points then a vertex is below a pair of vertices if and only if it is below or on the line that goes through these two vertices. A vertex is strictly below a pair of vertices if and only if it is below the line that goes through these two vertices.

Next we show two additional interesting properties of the extremal vertices. Notice that these statements could have been phrased as statements about ABA-free hypergraphs as they concern only $\FF$, the family of topsets, which is ABA-free, in particular the respective claims where we replace topvertices with bottomvertices also hold.

\begin{lem}\label{lem:deletetop}
	Given a vertex $v$ that is not a topvertex. Then deleting $v$ does not change the set of topvertices, i.e., $T(\cH[S\setminus\{v\}])=T(\cH)$.
\end{lem}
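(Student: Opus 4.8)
The plan is to work entirely with the underlying ABA-free family $\FF$ of topsets, since by definition the topvertices of $\cH$ are precisely the unskippable vertices of $\FF$, and the topvertices of $\cH[S\setminus\{v\}]$ are the unskippable vertices of the induced family $\FF[S\setminus\{v\}]=\{F\setminus\{v\}:F\in\FF\}$, which is again ABA-free. So the whole task reduces to showing: for a vertex $v$ that is skippable in $\FF$ (i.e.\ not a topvertex), a vertex $u\neq v$ is unskippable in $\FF$ if and only if it is unskippable in $\FF[S\setminus\{v\}]$. I would prove the two directions separately.

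One direction is immediate and does not even use that $v$ is skippable: if $A\setminus\{v\}$ skips $u$ in the restriction, then since $\min(A)\le\min(A\setminus\{v\})<u<\max(A\setminus\{v\})\le\max(A)$ and $u\notin A$, the very same $A$ skips $u$ in $\FF$. Hence ``unskippable in $\FF$'' implies ``unskippable in the restriction''. The content is the converse, and this is where the hypothesis on $v$ enters. I would take a witness $A\in\FF$ with $\min(A)<u<\max(A)$ and $u\notin A$; if $A\setminus\{v\}$ still skips $u$ we are done, so the only obstruction is when deleting $v$ destroys the witness. This forces $v$ to be the extreme element responsible, and up to reversing the vertex order (which preserves ABA-freeness and swaps the roles of $\min$ and $\max$) I may assume $v=\min(A)<u$ while every other element of $A$ lies to the right of $u$.

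The main work — and the step I expect to be the only real obstacle — is then to exhibit a new witness that survives the deletion. Here I would bring in that $v$ is skippable: choose $C\in\FF$ with $c_1:=\min(C)<v<\max(C)=:c_2$ and $v\notin C$. The key point is that $C$ is unaffected by deleting $v$, so it suffices to show that $C$ itself skips $u$. I would establish this by two short ABA-freeness arguments on the pair $\{A,C\}$. Since $c_1<v=\min(A)$ we have $c_1\notin A$ and $c_1\in C$, while $v\in A\setminus C$. First, if $c_2\notin A$, then $c_1<v<c_2$ with $c_1,c_2\in C\setminus A$ and $v\in A\setminus C$ is a forbidden ABA; hence $c_2\in A$, and as $c_2\neq v$ this places $c_2$ among the elements to the right of $u$, giving $c_2>u$. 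Now $c_1<u<c_2$ with $c_1,c_2\in C$. Second, if $u\in C$, then $c_1<v<u$ with $c_1,u\in C\setminus A$ and $v\in A\setminus C$ is again a forbidden ABA; hence $u\notin C$.

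Combining these, $\min(C)=c_1<u<c_2=\max(C)$ with $u\notin C$, so $C$ skips $u$; and since $v\notin C$, the edge $C$ survives intact in $\FF[S\setminus\{v\}]$ and witnesses that $u$ is skippable there, as needed. Putting both directions together yields $T(\cH[S\setminus\{v\}])=T(\cH)$. As remarked before the lemma, the statement concerns only the ABA-free family $\FF$, so the analogous claim for bottomvertices follows by applying the identical argument to $\bar{\FF}$.
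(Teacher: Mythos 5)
Your proof is correct and follows essentially the same route as the paper's: the nontrivial direction is handled by observing that a destroyed witness forces $v$ to be its extreme element, and then the hyperedge $C$ witnessing that $v$ is skippable is shown, via two applications of ABA-freeness against $A$, to itself skip $u$ and survive the deletion. The paper's write-up differs only cosmetically (it argues by contradiction with a wlog $v<w$ rather than contrapositively with order reversal, and it excludes all vertices of the second witness from the interval between $v$ and the target vertex rather than just the target vertex itself), but the witnesses and the ABA arguments coincide.
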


\begin{proof}
	Assume on the contrary that by deleting $v$ a vertex $w$ becomes a topvertex, wlog. assume that $v<w$. As $w$ was not a topvertex in $\HH$, there exists a hyperedge $H\in \FF$ that skips $w$ in $\HH$ but does not skip $w$ after deleting $v$. This implies that $v$ must be the only vertex in $H$ before $w$ and that $H$ avoids $w$. 
	Now using that $v$ is not a topvertex in $\cH$, it is skipped by a hyperedge $H'\in \FF$. Thus $H'$ contains vertices $v',v''$ and avoids $v$ such that $v'<v<v''$. If $H'$ contains $w$ then $H$ and $H'$ form an ABA-occurrence on the vertices $v',v,w$, a contradiction (using that $H$ contains no vertex before $v$, thus $v'\notin H$). If $H'$ contains a vertex between $v$ and $w$ then similarly we get an ABA-occurrence. Thus $w<v''$ and so $H'$ skips $w$ on $S\setminus\{v\}$, thus $w$ is not a topvertex after deleting $v$, a contradiction.
\end{proof}

\begin{lem}\label{lem:conseqtop}
	Given a vertex $v$ that is not a topvertex. Let $t_i<v<t_{i+1}$ be the topvertices right before and right after $v$. Then there exists a topset $H\in \FF$ such that $v\notin H$ while $t_i,t_{i+1}\in H$.
\end{lem}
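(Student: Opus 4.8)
The plan is to reduce to the case where $t_i$ and $t_{i+1}$ are the immediate neighbours of $v$ in the vertex order, and then read off the desired topset directly from a hyperedge that witnesses that $v$ is skippable. Throughout I work with the ABA-free family $\FF$, so ``topset'' means a member of $\FF$ and ``topvertex'' means an unskippable vertex of $\FF$.

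First I would delete vertices. By the choice of $t_i$ and $t_{i+1}$ there is no topvertex strictly between them, so every vertex lying strictly between $t_i$ and $t_{i+1}$ other than $v$ is a non-topvertex. Deleting these one at a time and applying Lemma \ref{lem:deletetop} at each step, the set of topvertices is unchanged; in particular $t_i$ and $t_{i+1}$ remain topvertices and $v$ remains a non-topvertex. Passing to the resulting induced subhypergraph $\FF'=\FF[S']$ (which is again ABA-free), I may therefore assume that $t_i$ is the immediate predecessor and $t_{i+1}$ the immediate successor of $v$. It then suffices to find a topset $H'\in\FF'$ with $t_i,t_{i+1}\in H'$ and $v\notin H'$: writing $H'=H\cap S'$ for some $H\in\FF$, the hyperedge $H$ contains $t_i,t_{i+1}$ and, since $v\in S'\setminus H'$, avoids $v$, so this $H$ is as required.

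Next I would use that $v$ is not a topvertex of $\FF'$, hence is skippable: there is a hyperedge $H'\in\FF'$ containing some $x<v$ and some $y>v$ with $v\notin H'$. Because $t_i$ is now the immediate predecessor of $v$, we have $x\le t_i<v$, and because $t_{i+1}$ is the immediate successor of $v$, we have $v<t_{i+1}\le y$. Thus both topvertices $t_i$ and $t_{i+1}$ lie between the two elements $x,y\in H'$, so Observation \ref{obs:topintop} forces $t_i,t_{i+1}\in H'$ (the cases $x=t_i$ or $y=t_{i+1}$ being immediate). Since $v\notin H'$, this $H'$ is the desired topset, and lifting it back to $\FF$ as above finishes the argument.

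The only real subtlety — and the reason the deletion step is needed — is that an arbitrary hyperedge skipping $v$ need not contain $t_i$ and $t_{i+1}$. Without the reduction, if $a$ and $b$ are the nearest elements of such a hyperedge on either side of $v$, then Observation \ref{obs:topintop} only guarantees $t_i\le a<v<b\le t_{i+1}$, which does not pin $a,b$ down to $t_i,t_{i+1}$ (the hyperedge could reach $v$ through intervening non-topvertices). Removing those intervening non-topvertices via Lemma \ref{lem:deletetop} is exactly what collapses these inequalities to equalities and lets Observation \ref{obs:topintop} close the proof.
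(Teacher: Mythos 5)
Your proof is correct, and it takes a genuinely different (and cleaner) route than the paper's. The paper proves the lemma by induction on the number of vertices between $t_i$ and $t_{i+1}$: it takes a hyperedge $H'$ skipping $v$, argues via Observation \ref{obs:topintop} and Lemma \ref{lem:unskippable} that $H'$ contains exactly one of $t_i,t_{i+1}$ (else we are done), extracts a non-topvertex $w$ of $H'$ with $v<w<t_{i+1}$, temporarily deletes $v$ (invoking Lemma \ref{lem:deletetop}), applies the inductive hypothesis to $w$, and then must handle the awkward case that the resulting hyperedge $H''$ contains $v$ — which it kills by exhibiting an explicit ABA-occurrence of $H'$ and $H''$ on $v,w,t_{i+1}$. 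You instead delete everything \emph{except} $v$: all the intervening non-topvertices between $t_i$ and $t_{i+1}$, iterating Lemma \ref{lem:deletetop}, so that $t_i,v,t_{i+1}$ become consecutive. Then the skippability witness for $v$ in the induced hypergraph is forced by Observation \ref{obs:topintop} to contain both $t_i$ and $t_{i+1}$, and — this is the point of your reversal — since $v$ was never deleted, the induced hyperedge $H'=H\cap S'$ with $v\in S'\setminus H'$ lifts to a hyperedge $H\in\FF$ that automatically avoids $v$. This removes both the induction on the statement itself and the final ABA-freeness case analysis; the only place ABA-freeness enters is inside the two cited lemmas (plus the routine fact, which you correctly note, that induced subhypergraphs of ABA-free hypergraphs are ABA-free, so those lemmas apply to $\FF[S']$). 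The paper's argument, in exchange, never leaves the original vertex set and exercises the ABA condition directly, but your reorganization — deleting the obstructing vertices rather than the vertex of interest, precisely so that lifting is trivial — is a real simplification.
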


\begin{proof}
	We prove the statement by induction on the number of vertices between $t_i$ and $t_{i+1}$. Assume on the contrary that $v$ is not a topvertex yet the required $H$ does not exist. As $v$ is not a topvertex, there is a hyperedge $H'\in \FF$ that skips $v$. If it contains a vertex before $t_i$ then by Observation \ref{obs:topintop} it also contains $t_i$. Similarly, if it contains a vertex after $t_{i+1}$ then it also contains $t_{i+1}$. It cannot contain only vertices between $t_{i}$ and $t_{i+1}$ as then using that every topset contains a topvertex, we would get a topvertex between $t_{i}$ and $t_{i+1}$, a contradiction. Thus $H'$ must contain exactly one of these two vertices, wlog. contains $t_{i}$, avoids $t_{i+1}$ and contains another vertex $w$ s.t. $v<w<t_{i+1}$.
	
	Now we temporarily delete $v$, by Lemma \ref{lem:deletetop} $w$ is still not a topvertex. As there is one less vertex between $t_i$ and $t_{i+1}$, we can apply induction to find a hyperedge $H''\in \FF$ such that $H''\setminus \{v\}$ avoids $w$ yet contains $t_i$ and $t_{i+1}$. If $v\notin H''$ then $H=H''$ is as required. Otherwise, if $v\in H''$, then $H'$ and $H''$ form an ABA-occurrence on $v,w,t_{i+1}$, a contradiction.	
\end{proof}

\section{Proofs}

\subsection{Proofs of Helly theorems about pseudoconvex sets}\label{sec:proofconvex}

\begin{proof}[Proof of Theorem \ref{thm:weakhellypsconvex}]
	Given a pseudohalfplane hypergraph $\HH$ and a subfamily $\CC$ of its convex sets such that every triple of convex sets from $\CC$ has a common vertex, we claim that we can add a vertex contained in every convex set of $\CC$ (so that the new hypergraph is still a pseudohalfplane hypergraph).
	
	For each $C\in \CC$ let $\HH_C\subseteq \HH$ be the family of pseudohalfplanes such that $\cap \{H: H\in \HH_C\}=C$. Let $\HH_\CC=\cup \{\HH_C:C\in \CC\}$. As $\HH_\CC\subseteq \HH$, it is a pseudohalfplane hypergraph. Lemma \ref{lem:weakhellypshp} implies that we can add a new vertex $v$ to $\HH_\CC$ and extend its hyperedges appropriately so that $v$ is in every hyperedge of $\HH_\CC$ and the new hypergraph $\HH_\CC^+$ is still a pseudohalfplane hypergraph. Finally, we can apply Lemma \ref{lem:extension} to conclude that one can extend also $\HH$ to $v$ to get $\HH^+$ so that $\HH_\CC^+\subseteq \HH^+$. Here $v$ is in every hyperedge of $\HH_\CC^+$ and thus in every convex set of $\CC^+$, as required (where each set of $\CC^+$ is an original convex set from $\CC$ plus the vertex $v$, which is still convex as it is the intersection of the hyperedges of $\HH_\CC^+$).
\end{proof}

\begin{proof}[Proof of Theorem \ref{thm:weakhellypshsconvex}]
	Given a pseudohemisphere hypergraph $\HH$ and a subfamily $\CC$ of its convex sets such that every $4$-tuple of convex sets from $\CC$ has a common vertex, we claim that we can add a vertex contained in every convex set of $\CC$ (so that the new hypergraph is still a pseudohemisphere hypergraph).
	
	We only sketch the proof, the details are left to the interested reader. Using Lemma \ref{lem:extension} one can easily prove a similar extension lemma for pseudohemisphere hypergraphs. Similar to the proof of Theorem \ref{thm:weakhellypsconvex} applying Lemma \ref{lem:weakhellypshs} and then this extension lemma we get the required statement.
\end{proof}

The reason why we need $4$-tuples in Theorem \ref{thm:weakhellypshsconvex} is that
in Lemma \ref{lem:weakhellypshs} for pseudohemisphere hypergraphs we need that every $4$-tuple intersects (and this is optimal according to Claim \ref{claim:no2-1}), whereas in Lemma \ref{lem:weakhellypshp} for pseudohalfplane hypergraphs we just needed that every $3$-tuple intersects.

\begin{proof}[Proof of Theorem \ref{thm:dualweakpshs}]
Applying Theorem \ref{thm:weakhellypshsconvex} on the dual hypergraph, which is also a pseudohemisphere hypergraph (as shown in \cite{abafree}), where $\CC$ is set to be the family of hyperedges that are duals of the points in $S'$, implies the required statement.
\end{proof}

As we have already mentioned earlier, in \cite{kbdiscretehelly} considering (primal) discrete Helly theorems for pseudohalfplane hypergraphs, it was proven that we can even guarantee the existence of a bounded number of vertices of $S$ that hit all hyperedges, that is, we do not need new vertices to hit all hyperedges. E.g., Theorem \ref{thm:primalpshp32} states that Theorem \ref{lem:weakhellypshp} can be modified such that we require instead of one new vertex (not necessarily from $S$) that hit all hyperedges two vertices from $S$ that together hit all hyperedges. However, a simple construction about halfplanes of \cite{jjr} shows that for pseudoconvex sets one cannot expect such a strong discrete Helly theorem where the hitting vertices are from $S$. 

Also, in \cite{kbdiscretehelly} dual strong discrete Helly theorems were proved for pseudohalfplanes. E.g., if in a family $\HH$ of pseudohalfplanes on vertex set $S$ every subset of $3$ vertices in $S$ belongs to some hyperedge $H\in \cH$ then there exists two hyperedges in $\cH$ whose union covers $S$. However, as in the primal case, for pseudoconvex sets such a dual strong discrete Helly theorem cannot hold:

\begin{claim}\label{claim:nodualstrong}
	For every $c,m$ there exists a pseudohalfplane hypergraph $\HH$ on vertex set $S$ and a subfamily $\CC$ of its convex sets such that every subset of $c$ vertices belongs to some convex set of $\CC$ but there do not exist $m$ convex sets in $\CC$ whose union covers $S$.
\end{claim}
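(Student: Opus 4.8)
The plan is to exhibit a single simple family in which the convex sets are so small and so numerous that any bounded number of them fails to cover the ground set, while the covering hypothesis holds trivially. Since every halfplane hypergraph is a pseudohalfplane hypergraph (Section~\ref{sec:geomconseq}), it suffices to give a geometric example.

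First I would take $P$ to be a set of $n$ points in strictly convex position (say, on a circle) and let $\HH$ be the halfplane hypergraph they induce, with $S=P$. The key observation is that for points in strictly convex position \emph{every} subset $S'\subseteq S$ is a convex set of $\HH$: each point $p\in S\setminus S'$ is a vertex of the convex hull of $S'\cup\{p\}$, so there is a halfplane, and hence a hyperedge of $\HH$, that contains all of $S'$ but misses $p$. Intersecting these hyperedges over all $p\notin S'$ yields exactly $S'$, so $Conv(S')=S'$ in $\HH$. In particular every subset of size $c$ is a convex set of $\HH$.

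Next I would set $\CC$ to be the family of all convex sets of size exactly $c$, that is, all $c$-element subsets of $S$. The covering hypothesis is then immediate: every $c$-element subset of $S$ belongs to $\CC$, namely to itself. On the other hand, each member of $\CC$ has exactly $c$ vertices, so the union of any $m$ of them has at most $cm$ vertices. Choosing $n=|S|>cm$ (for instance $n=cm+1$) guarantees that no $m$ sets of $\CC$ can cover $S$, which is exactly the required failure.

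There is essentially no serious obstacle here beyond selecting the right example; the only point to verify with care is the convex-position claim that every subset is a convex set, and this reduces to the standard fact that a vertex of the convex hull can be separated from the remaining points by a halfplane (equivalently, by Claim~\ref{claim:singleton}, that every point in convex position is extremal and hence separable). Everything else is a one-line counting argument.
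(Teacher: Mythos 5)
Your proposal is correct and is essentially identical to the paper's own proof: both take $cm+1$ points in convex position, let $\HH$ be the hypergraph induced by all halfplanes, observe that every subset of the vertices is then a convex set, and take $\CC$ to be all $c$-element subsets. The only difference is that you spell out the separation argument showing every subset is convex, which the paper states without proof.
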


\begin{proof}
	Just put $cm+1$ vertices in convex position and let $\HH$ be the pseudohalfplane hypergraph defined by every halfplane. Then every subset of the vertices is a convex set. Let $\CC$ be the family of all the sets of size $c$. This family has the properties required by the claim.
\end{proof}

\subsection{Proofs of generalizations of further classical results}\label{sec:proofclassical}

\begin{proof}[Proof of Claim \ref{claim:strongconvex}]
	Suppose first that $v$ is not strongly inside the convex hull of $S'$ in $\HH$, i.e., there exists an extension $\HH'$ of $\HH$ in which $v\notin Conv(S')$. This means that there is a hyperedge $H$ in $\HH'$ that contains $S'$ but does not contain $v$. One can then add the complement $\bar H$ to $\HH'$ as well. Restricted to $S'\cup\{v\}$ we have $\bar H\cap (S'\cup\{v\})=\{v\}$ and thus by Claim \ref{claim:singleton} in $\HH$ we have that $v$ is extremal in $S'\cup\{v\}$.
	
	Suppose now that $v$ is extremal in $S'\cup\{v\}$. Again by Claim \ref{claim:singleton} when restricted to $S'\cup\{v\}$, $\HH$ can be extended with the hyperedge $\{v\}$. Using Lemma \ref{lem:extensiondual} we can extend this hyperedge such that we get a new hyperedge $H$ such that $\HH$ with $H$ is a pseudohalfplane hypergraph on $S$ and $H\cap (S'\cup\{v\})=\{v\}$. We can add $\bar H$ to the hypergraph as well to get the hypergraph $\HH'$ for which $\bar H\cap (S'\cup\{v\})=S'$ and then by the definition of the convex hull we get that in $\HH'$ we have $v\notin Conv(S')$, i.e., $v$ is not strongly inside the convex hull of $S'$ in $\HH$.
\end{proof}

Next we proceed by proving the two versions of Carath\'eodory's Theorem for pseudoconvex sets.

\begin{proof}[Proof of Claim \ref{claim:pseudosteinitz}]
We are given a pseudohalfplane hypergraph $\HH$ with $\HH\subseteq \FF\cup \bar{\FF}$, wlog. $\HH=\FF\cup \bar{\FF}$, on vertex set $S\cup\{v\}$ where $\FF$ is an ABA-free hypergraph. Let $\HH'=\HH[S'\cup\{v\}]$. We assume that $v$ is not an extremal vertex of $S'\cup\{v\}$. Let $T=(t_1=v_1,t_2,\dots, t_k=v_n)$ and $B=(b_1=v_1,b_2,\dots, b_l=v_n)$ be the sets of top and bottom vertices of $\HH'$ ordered according to the ordering on $S\cup\{v\}$. Now there exist $i$ and $j$ such that $t_i<v<t_{i+1}$ and $b_j<v<b_{j+1}$. We claim that for $S''=\{t_i,t_{i+1},b_j,b_{j+1}\}$ we have $v\in Conv(S'')$ in $\HH'$ and thus also in $\HH$. To see this, it is enough to prove that for an arbitrary hyperedge $H$ of $\HH'$ containing $S''$, $H$ also must contain $v$. If $H$ is a topset in $\HH'$ then by Lemma \ref{lem:consecutives} $b_j,b_{j+1}\in H$ implies $v\in H$ and if $H$ is a bottomset in $\HH'$ then by Lemma \ref{lem:consecutives} $t_i,t_{i+1}\in H$ implies $v\in H$. 

We are left to prove that $v$ is strongly inside the convex hull of $S''$.
As $v$ is strongly inside the convex hull of $S$, by Claim \ref{claim:strongconvex} it is not a topvertex of $\HH$ and thus using Lemma \ref{lem:conseqtop} we get that there is a topset that contains $t_i$ and $t_{i+1}$ but avoids $v$. This topset shows that $v$ is not a topvertex in $\HH[S'']$. The same way get that $v$ is not a bottomvertex in $\HH[S'']$ and thus by Claim \ref{claim:strongconvex} we get that $v$ is strongly inside the convex hull of $S''$, as required.
\end{proof}

\begin{proof}[Proof of Theorem \ref{thm:pseudocaratheodory}]
We are given a pseudohalfplane hypergraph $\HH$, wlog. $\HH=\FF\cup \bar{\FF}$ on vertex set $S\cup\{v\}$ where $\FF$ is an ABA-free hypergraph.  Let $\HH'=\HH[S'\cup\{v\}]$. Let $T=(t_1=v_1,t_2,\dots, t_k=v_n)$ and $B=(b_1=v_1,b_2,\dots, b_l=v_n)$ be the sets of top and bottom vertices of $\HH'$ ordered according to the ordering on $S\cup\{v\}$ and $C=(c_1=t_1,c_2=t_2,\dots, c_k=t_k=b_l,c_{k+1}=b_{l-1},\dots, c_n=b_2)$ the circular order of the extremal vertices. We assume that $v$ is not an extremal vertex of $S'\cup\{v\}$ and we want to find at most three vertices of $S'$ such that their convex hull in $\HH'$ (and thus also in $\HH$) already contains $v$. 

If $v$ is contained in the convex hull (in $\HH'$) of two other vertices of $S'$ then we are done. Suppose from now on that this is not the case. Thus, if $v$ is above (resp. below) a pair of vertices in $\HH'$ then it cannot be below (resp. above) them and so it is strictly above (resp. below) them.

We aim to find two extremal vertices of $\HH'$, $c_i$ and $c_{i+1}$, consecutive in the circular order of $E(\HH')$ such that $v\in Conv(v_1,c_i,c_{i+1})$ in $\HH'$. Note that in the geometric setting of halfplanes this corresponds to the point being in a triangle of the triangulation of the convex hull in which every triangle is incident to the leftmost point.


We need the following observation: if $p$ is an extremal vertex and $p<v$ then if $p$ is a topvertex (resp. bottomvertex) then $v$ is below (resp. above) $v_1p$.



We claim that $v$ must be (strictly) below $v_1t_2$ and similarly $v$ must be (strictly) above $v_1b_2$. It is enough to prove the first, the other case can be done the same way. If $v$ is to the right of $t_2$ then $v$ is below $v_1t_2$ by the previous observation. Otherwise $v$ is between $v_1$ and $t_2$. By Lemma \ref{lem:consecutives} there cannot exist a bottomset such that it contains $v_1,t_2$ but does not contain $v$, thus $v$ is again below $v_1t_2$.

As $v$ is (strictly) below $v_1c_2$ but (strictly) above $v_1c_n$, we can take in the circular order a $c_i$ ($i\ne n$) such that $v$ is strictly below $v_1c_i$  and $v$ is strictly above $v_1c_{i+1}$. Note that as the rightmost vertex is both a topvertex and a bottomvertex, $c_i$ and $c_{i+1}$ are either both topvertices or both bottomvertices, wlog. they are topvertices and so $c_i<c_{i+1}$. We claim that $v\in Conv(v_1,c_i,c_{i+1})$. To prove this, take an arbitrary hyperedge $H$ which contains $v_1,c_i,c_{i+1}$, we need to prove that it contains $v$ as well. First, $v<c_{i+1}$ as otherwise $v$ would be below $vc_{i+1}$, a contradiction. Second, if $H$ avoids $v$ and $v<c_i$ then if $H$ is a topset then $v$ is below $v_1c_{i+1}$ and if $H$ is a bottomset then $v$ is above $vc_i$, both are contradictions. Third, if $c_i<v<c_{i+1}$ then if $H$ avoids $v$ then if $H$ is a topset then $v$ is below $v_1c_{i+1}$, a contradiction and if $H$ is a bottomset then using Lemma \ref{lem:consecutives} $H$ containing $c_i$ and $c_{i+1}$ must also contain $v$, a contradiction.
\end{proof}

Notice that if $v$ is not extremal in $S'\cup\{v\}$ then by Claim \ref{claim:strongconvex} $v$ is strongly inside the convex hull of $S'$ and thus $v\in Conv(S')$ in $\HH$. 
We claim that if the pseudohalfplane hypergraph $\HH$ is maximal then the reverse is also true. It is enough to prove that if $v$ is extremal in $S'\cup\{v\}$, then $v\notin Conv(S')$. In this case, by Claim \ref{claim:singleton} we can add the hyperedge $S'$ to $\HH[S'\cup\{v\}]$. Using Lemma \ref{lem:extensiondual} we can also add a hyperedge to $\HH$ that coincides with this hyperedge on $S'\cup\{v\}$. As $\HH$ is maximal, this hyperedge must already be in $\HH$ which shows that $v\notin Conv(S')$.

\begin{lem}\label{lem:4allextremal}
	If in a pseudohalfplane hypergraph $\HH$ on vertex set $S'=\{a,b,c,d\}$ with $a<b<c$ and $a<d<c$ we have that $b$ is above $ac$ and $d$ is below $ac$ then  all vertices of $S'$ are extremal, $b$ is a topvertex and $d$ is a bottomvertex.
\end{lem}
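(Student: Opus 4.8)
Looking at Lemma \ref{lem:4allextremal}, I need to prove that given four vertices $a<b<c$ and $a<d<c$ (so $d$ lies strictly between $a$ and $c$ in the order, as does $b$), with $b$ above $ac$ and $d$ below $ac$, all four vertices are extremal, and moreover $b$ is a topvertex and $d$ is a bottomvertex.

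\medskip

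The plan is to use Claim \ref{claim:singleton}, which characterizes extremal vertices as exactly those that can be separated from the rest by a (possibly additional) hyperedge, and Definition \ref{def:orientation}, which tells us precisely what ``$b$ is above $ac$'' and ``$d$ is below $ac$'' mean in terms of the induced subhypergraph on triples. First I would handle the two endpoints $a$ and $c$: by the Observation \cite{kbdiscretehelly} quoted in the excerpt, the leftmost and rightmost vertices are always both topvertices and bottomvertices, hence extremal; since $a$ is the smallest and $c$ the largest of the four, they are automatically extremal in $\HH$ on $S'$. So the real content is to show $b$ is a topvertex and $d$ is a bottomvertex.

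\medskip

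To show $b$ is a topvertex, by Claim \ref{claim:singleton} it suffices to show we can add the singleton topset $\{b\}$ to the underlying ABA-free $\FF$ while keeping the hypergraph a pseudohalfplane hypergraph; equivalently, it suffices to exhibit that $b$ can be separated off as a topset. The hypothesis ``$b$ is above $ac$'' means, by Definition \ref{def:orientation}, that in the induced subhypergraph on $\{a,b,c\}$ the vertex $b$ is a topvertex. I would argue that this local fact propagates: I want to find (or add) a topset $H$ with $H\cap S'=\{b\}$. Since $b$ is a topvertex in $\HH[\{a,b,c\}]$, there is a topset of $\HH[\{a,b,c\}]$ witnessing this, and the key step is to control the position of $d$ relative to such a set. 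Here the relation ``$d$ is below $ac$'' (i.e.\ $d$ is a bottomvertex of $\HH[\{a,d,c\}]$) must be used to rule out $d$ forcing itself into any topset isolating $b$; symmetric reasoning handles $d$ being a bottomvertex via ``$d$ below $ac$.''

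\medskip

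Concretely, I expect to reason by the definition of ABA-freeness and the orientation: consider the three-element relations as giving the $\FF$-behavior on triples, and check that the four orientation constraints ($b$ top on $\{a,b,c\}$, $d$ bottom on $\{a,d,c\}$) are jointly realizable only when $\{b\}$ is itself achievable as a topset and $\{d\}$ as a bottomset on all of $S'=\{a,b,c,d\}$. The main obstacle I anticipate is the interaction between $b$ and $d$: a priori one worries that a topset isolating $b$ might be forced to contain $d$ (or vice versa) through an ABA-type obstruction, since $b$ and $d$ are interleaved between $a$ and $c$. The crux will therefore be a short case analysis on the order of $b$ and $d$ (either $b<d$, $b=d$, or $d<b$) showing that in each case the hypothesis that $b$ is above while $d$ is below $ac$ prevents any hyperedge from skipping $b$ from above or $d$ from below — so $\{b\}$ is addable as a topset and $\{d\}$ as a bottomset, giving by Claim \ref{claim:singleton} that $b$ is a topvertex, $d$ a bottomvertex, and all four vertices extremal.
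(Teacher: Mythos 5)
Your reduction of the problem is fine as far as it goes: $a$ and $c$ are extremal because they are the leftmost and rightmost vertices, and the content of the lemma is that no topset of $\FF$ skips $b$ and no bottomset skips $d$ (equivalently, via Claim \ref{claim:singleton}, that the singletons $\{b\}$ and $\{d\}$ can be added as a topset resp.\ bottomset). But exactly at this point the proposal stops being a proof: your assertion that the two orientation hypotheses ``prevent any hyperedge from skipping $b$ from above or $d$ from below'' \emph{is} the lemma, it is deferred to an unspecified case analysis on the order of $b$ and $d$, and it is never carried out. Worse, that assertion cannot be derived from the two triple hypotheses alone. Take wlog $a<b<d<c$ and consider a topset $F$ with $F\cap S'=\{a,d\}$: it skips $b$, yet neither hypothesis says anything about it --- ``$b$ is above $ac$'' only forbids topsets containing both $a$ and $c$ while missing $b$, and ``$d$ is below $ac$'' only forbids bottomsets containing $a$ and $c$ while missing $d$. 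Indeed, the hypergraph with single topset $\{a,d\}$ and single bottomset $\{b,c\}$ (here $\FF=\{\{a,d\}\}$ is trivially ABA-free) satisfies both hypotheses as literally stated: the traces of the topsets on $\{a,b,c\}$ and of the bottomsets on $\{a,d,c\}$ are $\{a\}$ and $\{c\}$, which skip nothing --- yet $b$ is skipped by a topset and $d$ by a bottomset. So any correct argument must bring in structure beyond the two triples (the strict reading of above/below, which is how the lemma is actually invoked in the proof of Theorem \ref{thm:multiequi}, and/or global properties of $S'$); this is also why your planned split into $b<d$, $b=d$, $d<b$ (the middle case is vacuous, the four vertices being distinct) is not the operative case distinction.

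The paper supplies precisely this extra input, and neither ingredient appears in your proposal. First, by Claim \ref{convexsizeisminthree} a pseudohalfplane hypergraph on at least three vertices has at least three extremal vertices, so besides $a$ and $c$ one of $b,d$ is extremal in $S'$; combined with the orientation hypothesis this pins down (wlog) that $d$ is a bottomvertex of $S'$ itself, not merely of the triple $\{a,d,c\}$. Second, Observation \ref{obs:topindown} then kills the problematic topset: a topset $F$ skipping $b$ must contain $a$ and at least one of $d,c$; if $c\in F$ it contradicts ``$b$ above $ac$'' directly, and if $d\in F$ then, $d$ being a bottomvertex lying in the topset $F$, Observation \ref{obs:topindown} forces $F$ to contain all vertices on one side of $d$, hence $c\in F$ (since $b\notin F$), reducing to the first case. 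A small additional point: your sentence ``since $b$ is a topvertex in $\HH[\{a,b,c\}]$, there is a topset of $\HH[\{a,b,c\}]$ witnessing this'' is backwards --- being a topvertex is defined by the \emph{absence} of a skipping topset, so there is no witnessing hyperedge to work with.
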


\begin{proof}
	Wlog. $b<d$. In $S'$ besides $a$ and $c$ there must be at least one more extremal vertex, wlog. $d$. As $d$ is below $ac$, $d$ must be a bottomvertex in $S'$. We claim that $b$ must be a topvertex in $S'$. Indeed, otherwise there would be a topset $F$ in $\HH$ that skips $b$ on $S'$, that is, avoids $b$ but contains $a$ and at least one of $c$ and $d$. If $F$ would contain $c$ then $b$ would not be above $ac$, a contradiction. If $F$ would contain $d$ then $F$ must also contain $c$ by Observation \ref{obs:topindown} (using that it does not contain $b$) and then it is again a contradiction as before.
\end{proof}

\begin{lem}\label{lem:4vertices}
	If in a pseudohalfplane hypergraph $\HH$ on vertex set $S$ there is a subset $S'=\{a,b,c,d\}$ such that in the subhypergraph induced by $S'$ all vertices of $S'$ are extremal and in the circular order they appear in order $a,b,c,d$, then we can extend $\HH$ with a new vertex $v$ such that $v\in Conv(\{a,c\}\cap Conv(\{b,d\})$ in this extended hypergraph.
\end{lem}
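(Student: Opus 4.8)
The plan is to build the extension on a cleverly chosen subfamily and then lift it with Lemma~\ref{lem:extension}. First I would pin down the shape of $S'$: since all four vertices are extremal in $\HH[S']$ and occur in circular order $a,b,c,d$, and the leftmost and rightmost vertices are always both top- and bottomvertices, up to relabelling $a$ is leftmost, $c$ is rightmost, and the two middle vertices $b,d$ are a topvertex and a bottomvertex, exactly as in Lemma~\ref{lem:4allextremal}. I insert the new vertex $v$ strictly between the two middle vertices. With $\FF$ the underlying ABA-free family ($\HH\subseteq\FF\cup\bar{\FF}$), I declare, for a topset $F\in\FF$, that $v\in F^+$ precisely when $|F\cap\{a,b,c,d\}|\ge 3$, extending bottomsets complementarily; combinatorially this places $v$ at the crossing of the diagonals $ac$ and $bd$ of the quadrilateral $abcd$. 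Let $\HH'\subseteq\HH$ consist of those hyperedges $H$ with $\{a,c\}\subseteq H$, $H\cap\{a,c\}=\emptyset$, $\{b,d\}\subseteq H$, or $H\cap\{b,d\}=\emptyset$ (these are exactly the hyperedges whose $v$-membership is forced if we want $v\in Conv(\{a,c\})\cap Conv(\{b,d\})$), and let $\HH'^+$ be the extension of $\HH'$ to $v$ given by the rule above.

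The heart of the argument is to show that $\HH'^+$ is again a pseudohalfplane hypergraph, i.e.\ that the corresponding topset family is ABA-free. Because every vertex of $S'$ is extremal in $\HH[S']$, Lemma~\ref{lem:hullinterval} shows $F\cap S'$ is an arc of the $4$-cycle $a,b,c,d$ for every hyperedge, and the diagonal pairs $\{a,c\},\{b,d\}$ are the only $2$-element sets that are not arcs; hence no hyperedge is forced both ways and, on the topset side, the rule is simply ``$v\in F^+$ iff $|F\cap S'|\ge 3$'' with every forced topset satisfying $|F\cap S'|\neq 2$. Since $\FF$ was ABA-free, a forbidden occurrence in the extended family must use $v$, on some $x<y<z$ and two forced topsets $F_1,F_2$; the rule forces the one containing $v$ to meet $S'$ in at least three vertices and the other in at most one, so $S'$ has (at least two) vertices $w$ lying in the same topset as $v$ and not in the other. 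I expect the real obstacle to be showing that such a $w$ can always be taken strictly between the other two vertices of the occurrence: replacing $v$ by such a $w$ would give an ABA-occurrence already in $\FF$, a contradiction. This needs a short case analysis (on whether $v$ is the middle or an endpoint, and on the positions of the remaining two vertices relative to $S'$), using the arc property together with Observations~\ref{obs:topintop} and~\ref{obs:topindown} to put $w$ on the correct side of $v$; as the membership of $v$ depends only on $F\cap S'$, the analysis localizes to the at most six vertices $S'\cup\{x,y,z\}$.

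Once $\HH'^+$ is known to be a pseudohalfplane hypergraph, Lemma~\ref{lem:extension} (applied to $\HH'\subseteq\HH$) yields an extension $\HH^+$ of $\HH$ to $v$ with $\HH'^+\subseteq\HH^+$. It then remains only to read off the conclusion: any hyperedge of $\HH^+$ containing both $a$ and $c$ restricts on $S$ to a hyperedge of $\HH'$ (it is forced), hence lies in $\HH'^+$ and, by construction, contains $v$; therefore $v\in Conv(\{a,c\})$, and the identical argument with $\{b,d\}$ gives $v\in Conv(\{b,d\})$. Note that the unforced (size-$2$) hyperedges are extended arbitrarily by Lemma~\ref{lem:extension}, which does not affect these two convex hulls.

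As a sanity check for the obstacle I would use the realizability of ABA-free hypergraphs by points and upward pseudohalfplanes: realizing $\FF$, the points $a,b,c,d$ are in convex position in the cyclic order $a,b,c,d$ with $b$ above and $d$ below, the pseudo-diagonals $[a,c]$ and $[b,d]$ cross, and adjoining their crossing point to the point set gives an automatically ABA-free extension realizing exactly the rule above. Since this relies on a topological crossing fact, I would keep it only as motivation and present the combinatorial substitution argument as the proof, concluding in either case that $\HH^+$ extends $\HH$ by a vertex $v$ with $v\in Conv(\{a,c\})\cap Conv(\{b,d\})$.
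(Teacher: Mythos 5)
Your opening move (identifying the relevant hyperedges via their traces on $S'$ and using Lemma~\ref{lem:hullinterval} to see that these traces are arcs, hence of size at least $3$ or at most $1$) coincides with the paper's first step, but from there the two arguments diverge sharply. The paper observes that every hyperedge containing a diagonal pair must contain at least $3$ of the $4$ vertices of $S'$, so every \emph{triple} of such hyperedges has a common vertex, and then simply invokes the discrete Helly theorem (Theorem~\ref{thm:weakhellypsconvex}, built on Lemma~\ref{lem:weakhellypshp} and Lemma~\ref{lem:extension}) to produce $v$; the questions of where $v$ sits in the vertex order and which hyperedges receive $v$ are resolved inside that black box. You instead try to build the extension explicitly by a membership rule plus a placement of $v$, and this is exactly where your proposal has a genuine gap: the step you yourself flag as ``the real obstacle'' (replacing $v$ in a hypothetical ABA-occurrence by a vertex $w\in S'$) is false for placements of $v$ that your description allows, so no local case analysis can close it.

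Concretely, take $S=\{a,b,x,z,d,c\}$ with order $a<b<x<z<d<c$ and $S'=\{a,b,c,d\}$, topsets $F_1=\{b,x,z\}$, $F_2=\{b,d,c\}$, $F_3=\{b,c\}$ and bottomset $B_1=\{a,x,z,d\}$. The underlying family $\{F_1,F_2,F_3,\{b,c\}\}$ is ABA-free (a quick pairwise check), and in $\HH[S']$ the topset traces are $\{b\},\{b,d,c\},\{b,c\}$ and the bottomset traces are $\{a,d,c\},\{a\},\{a,d\}$, so the topvertices are $a,b,c$ and the bottomvertices are $a,d,c$: all four vertices of $S'$ are extremal with circular order $a,b,c,d$, i.e., the hypothesis of the lemma holds. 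Your forced family is $\HH'=\{F_1,F_2\}$ ($F_1$ is disjoint from $\{a,c\}$, $F_2$ contains $\{b,d\}$), and your rule demands $v\notin F_1^+$, $v\in F_2^+$. Now place $v$ anywhere in $(b,z)$ --- for instance immediately after $b$, a perfectly legitimate reading of ``strictly between the two middle vertices'' --- and $\{F_1^+,F_2^+\}$ already has an ABA-occurrence: $(x,v,z)$ when $x<v<z$, and $(v,x,d)$ when $b<v<x$. Since $\{F_1,F_2\}$ is ABA-free on $S$, substituting any vertex of $S'$ (indeed of $S$) for $v$ can never yield an occurrence in the original family, so the contradiction your argument needs simply does not exist; note also that the only vertices of $S'$ in $F_2\setminus F_1$, namely $d$ and $c$, lie entirely to one side of the occurrence. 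For this pair only placements in $(z,d)$ are admissible, and $x,z\notin S'$, so the correct position of $v$ cannot be read off from $S'$ or from ``$S'\cup\{x,y,z\}$'': it is a global constraint coming from hyperedges and vertices far from $S'$, and different pairs of forced hyperedges impose different one-sided constraints. Proving that all these constraints can be met simultaneously is, in essence, re-proving Lemma~\ref{lem:weakhellypshp} together with Lemma~\ref{lem:extension} --- which is precisely the machinery the paper's short proof delegates to, and which your construction would have to rebuild from scratch.
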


\begin{proof}
	Let $\HH_{a,c}\subseteq \HH$ denote the family of hyperedges that contain both of $a,c$ and $\HH_{b,d}\subseteq \HH$ denote the family of hyperedges that contain both of $b,d$. Notice that by Lemma \ref{lem:hullinterval} every hyperedge in $\HH_{a,c}$ must contain also either $b$ or $d$ (or both) and similarly every hyperedge in $\HH_{b,d}$ must contain also either $a$ or $c$ (or both). Thus every hyperedge in $\HH'=\HH_{a,c}\cup\HH_{b,d}$ contains at least $3$ of the $4$ vertices and so every triple of hyperedges from $\HH'$ must have a common vertex. We can apply Theorem \ref{thm:weakhellypsconvex} and add a vertex $v$ to $\HH$ to get the pseudohalfplane hypergraph $\HH^+$ such that $v$ is contained in all hyperedges of $\HH'^+$, the subhypergraph containing the extensions of the hyperedges of $\HH'$. Finally, notice that if $v$ is in every hyperedge that contains both of $a,c$ or both of $b,d$ then $v$ is in the convex hull of $\{a,c\}$ and of $\{b,d\}$ in this extended hypergraph $\HH^+$, finishing the proof.
\end{proof}

\begin{proof}[Proof of Theorem \ref{thm:multiequi}]
	First, $(1)\rightarrow (2)$ follows from the fact that $Conv(A\cap D)\subseteq Conv(A)$ and $Conv(B\cap D)\subseteq Conv(B)$.
	
	Second, to see $(3)\rightarrow (1)$ let the set guaranteed by $(3)$ be $H$. We can extend $\HH$ with $H$ and $\bar H$ to get the pseudohalfplane hypergraph $\HH'$ in which $H$ contains $A$ but avoids $B$ and $\bar H$ contains $B$ but avoids $A$. As in any extension by a vertex $v$ by definition of an extension $v$ is in exactly one of $H'$ and $\bar H'$ (the hyperedges corresponding to $H$ and $H'$ in the extended hypergraph), we get that $v\notin Conv(A)$ or $v\notin Conv(B)$, thus $(1)$ follows.
	
	We are left to prove $(2)\rightarrow (3)$, which will take some more effort.

	We assume that there exists an extension $\HH'$ of $\HH$ with additional hyperedges such that $\HH'$ cannot be extended with a new vertex $v$ such that for some subset $D\subseteq S$ with $|D|\le 4$ we have $v\in Conv(A\cap D)\cap Conv(B\cap D)$ in $\HH'$.

	We can assume that $\HH'$ contains $\emptyset$ as a hyperedge and so $Conv(\emptyset)=\emptyset$ in $\HH'$.

	
	Notice that for every $b<a<b'$, $a\in A$ and $b,b'\in B$, if $a$ is both above and below $bb'$ in $\HH'$ then $a$ is in the convex hull of $\{b,b'\}\subset B$, a contradiction by setting $D=\{b,a,b'\}$ and $v=a$.
	Thus we can assume that $a$ is strictly above or below $bb'$ in $\HH'$. Similarly, for every $a<b<a'$, $b$ is strictly above or below $aa'$ in $\HH'$.	

	We claim that for every vertex $a\in A$ and every pair of vertices $b,b'\in B$ such that $b<a<b'$ we either have that $a$ is strictly above $bb'$ for all such triples or strictly below $bb'$ for all such triples. To see this, assume on the contrary that there is a triple $b<a<b'$ where $a$ is strictly above $bb'$ and another triple $d<c<d'$ where $c$ is strictly below $dd'$ ($a,c\in A$ and $b,b',d, d'\in B$). Now let $l=\min(b,b',d,d')$ and $r=\max (b,b',d,d')$. We claim that $a$ is strictly above $lr$. Assume on the contrary, $a$ is strictly below $lr$. Then $a$ cannot be extremal in $S'=\{a,b,b',d,d'\}$ (being strictly above $bb'$ and below $lr$), and so by Theorem \ref{thm:pseudocaratheodory} $a$ is in the convex hull of at most three vertices from $\{b,b',d,d'\}\subset B$, a contradiction by setting $D$ to be the union of these at most three vertices and $\{a\}$ and setting $v=a$. Similarly, $c$ is strictly below $lr$. Then using Lemma \ref{lem:4allextremal} and Lemma \ref{lem:4vertices} we get that one can add a new vertex $v$ in $Conv(\{a,c\})\cap Conv(\{l,r\})$, which is a contradiction by setting $D=\{a,c,l,r\}$ (note that $l,r\in B$ and $a,c\in A$).
		
	Similarly, for every vertex $b\in B$ and every pair of vertices $a,a'\in A$ such that $a<b<a'$ we either have that $b$ is strictly above $aa'$ for all such triples or strictly below $aa'$ for all such triples.
	
	If one type of triples does not exist (triple $b<a<b'$ or $a<b<a'$ with $a,a'\in A$ and $b,b'\in B$) then wlog. we assume that there is no triple of type $a<b<a'$.
	
	Wlog. for every $b<a<b'$ with $a\in A$ and $b,b'\in B$, we have that $a$ is strictly above $bb'$. We claim that then for every $a<b<a'$ with $a,a'\in A$ and $b\in B$, we have that $b$ is strictly below $aa'$. If there is no such triple then we are trivially done. Otherwise, there exists a set of four vertices $S'=\{a,b,a',b'\}$ with $a<b<a'<b'$ alternately in $A$ and $B$, wlog. $a,a'\in A$ and $b,b'\in B$. It is enough to show that $b$ is strictly below $aa'$ (as then the same holds for every other such triple). Assume on the contrary that $b$ is strictly above $aa'$ and recall that $a'$ is strictly above $bb'$. In $S'$ besides $a$ and $b'$ there must be at least one more extremal vertex, wlog. $a'$, as it is above $bb'$, $a'$ must be a topvertex in $S'$. As $b$ is strictly above $aa'$, it cannot be a bottomvertex in $S'$. We claim that it must be a topvertex in $S'$. Indeed, otherwise there is a topset $F$ in $\HH$ that skips $b$ on $S'$, that is, $F$ avoids $b$ but contains $a$ and at least one of $a'$ and $b'$. If $F$ would contain $a'$ then $b$ would not be strictly above $aa'$, a contradiction. If $F$ would contain $b'$ then as $a'$ is a topvertex of $S'$, it must also contain $a'$ by Observation \ref{obs:topintop} and then it is again a contradiction as before. Thus, $b$ is a bottomvertex and then by Lemma \ref{lem:4vertices} we can add a vertex $v$ in $Conv(\{a,a'\})\cap Conv(\{b,b'\})$, a contradiction.
		
	Now let $\HH^-=\HH'[A\cup B]$. Add the hyperedge $F=A$ to $\HH^-$ as a topset, we claim that it remains to be a pseudohalfplane hypergraph (notice that $F$ separates $A$ and $B$). Assume on the contrary that there is a topset $F'\in \HH'$ such that with $F$ they have an ABA-occurrence on $A\cup B$. This is possible in two ways. First, if we have a subset $S'=\{a,b,a'\}$, $a<b<a'$ such that $a,a'\in A$ and $b\in B$ and so $F\cap S'=\{a,a'\}$ while $F'\cap S'=\{b\}$, then $b$ would be a topvertex in $S'$, that is, $b$ would be above $aa'$, a contradiction. In the second case we have a subset $S'=\{b,a,b'\}$, $b<a<b'$ such that $a\in A$ and $b,b'\in B$ and so $F\cap S'=\{a\}$ while $F'\cap S'=\{b,b'\}$, then $a$ would be a bottomvertex in $S'$, that is, $a$ would be below $bb'$, a contradiction. 
	
	Finally, using Lemma \ref{lem:extensiondual} we can extend $\HH'$ with $F$ to be a new hyperedge, which separates $A$ and $B$. As $\HH\subseteq \HH'$, we can also extend $\HH$ with $F$.
\end{proof}

We note that in the proof of Theorem \ref{thm:multiequi} we used Theorem \ref{thm:pseudocaratheodory} once for a subset of size $5$ so even though Theorem \ref{thm:multiequi} implies Theorem \ref{thm:pseudocaratheodory}, its proof did not become redundant.

\begin{proof}[Proof of Theorem \ref{thm:kirch}]
	Given a pseudohalfplane hypergraph $\HH$ on vertex set $S$ and subsets $A \cap B= \emptyset$ such that for every subset $D\subseteq S$ with $|D|\le 4$ there exists a hyperedge of $\HH$ separating $A\cap D$ and $B\cap D$. Let $\HH'$ be the extension of $\HH$ such that for each such separating hyperedge we also add its complement to $\HH'$ if it is not already in $\HH$. Then by definition of the convex hull, in $\HH'$ for every such $D$ we have $Conv(A\cap D)\cap Conv (B\cap D)=\emptyset$. We can thus apply Theorem \ref{thm:multiequi} $(2)\rightarrow (3)$ to conclude that there exists an extension $\HH''$ of $\HH$ with one new hyperedge $H$ such that $H$ separates $A$ and $B$, as required.	
\end{proof}

\begin{proof}[Proof of Theorem \ref{thm:radon}]
	Take a subset $S'\subseteq S$ with $|S'|=4$ of the vertex set $S$ of the pseudohalfplane hypergraph $\HH$. If there is one vertex of $S'$ that is not an extremal vertex of $\HH[S']$ then by Corollary \ref{cor:convofextremalsub} the non-extremal vertex is contained in the convex hull of the other $3$ vertices and we are done. 
	Thus we can suppose that all $4$ vertices are extremal in $\HH[S']$, suppose their circular order is $a,b,c,d$. By Lemma \ref{lem:4vertices} we can add a new vertex $v$ to $\HH$ such that $v$ is in the convex hull of $\{a,c\}$ and of $\{b,d\}$ in this extended hypergraph, finishing the proof.
\end{proof}

\subsection{Proof of the Cup-Cap Theorem for pseudoconvex sets}\label{sec:proofcupcap}

\begin{lem}\label{lem:eszlemma}
	Given a pseudohalfplane hypergraph $\HH$ on vertex set $S$ and a vertex $v\in S$. Suppose that there exists a $k$-cup $A=\{a_1,a_2,\dots, a_k=v\}$ ending in $v$ and an $l$-cap $B=\{v=b_1,b_2,\dots, b_l\}$ starting at $v$, then either $A'=A\cup\{b_2\}$ is a $k+1$-cup or $B'=\{a_{k-1}\}\cup B$ is an $l+1$-cap.
\end{lem}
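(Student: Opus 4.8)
The plan is to look at the three vertices $a_{k-1}<v<b_2$ at the junction of the cup and the cap and to read off the conclusion from the orientation of this single triple. By Definition~\ref{def:orientation} and the note after it (which rests on Claim~\ref{convexsizeisminthree}), in $\HH[\{a_{k-1},v,b_2\}]$ the middle vertex $v$ is above $a_{k-1}b_2$ or below it (possibly both). The statement is symmetric under exchanging the roles of topvertices and bottomvertices together with reversing the vertex order, so it is enough to prove the following equivalent form: if $A'=A\cup\{b_2\}$ is \emph{not} a $(k+1)$-cup, then $B'=\{a_{k-1}\}\cup B$ \emph{is} an $(l+1)$-cap. I would in fact assume that both fail and derive a contradiction.

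First I would reduce ``being a cup after appending one vertex on the right'' to a single incidence. Since in the cup $A$ every vertex is a bottomvertex, Observation~\ref{obs:topintop} forces every bottomset of $\HH[A]$ to meet $A$ in an \emph{interval}: a bottomset containing two vertices contains all bottomvertices between them, and here all vertices are bottomvertices. Consequently, when we append $b_2$ on the right, the only way $A'$ can fail to be a cup is that some bottomset uses $b_2$ as its right endpoint and skips a vertex of $A$; and because $v=a_k$ is the rightmost vertex of $A$ (hence unskippable inside $A$), one checks that $A'$ is a cup if and only if $v$ is still a bottomvertex of $\HH[A']$. Symmetrically, using Observation~\ref{obs:topintop} for topsets in the cap $B$, the set $B'$ is a cap if and only if $v$ remains a topvertex of $\HH[B']$.

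Now suppose both fail. From the cup-failure there is a bottomset $\bar F$ with $a_t,b_2\in\bar F$ and $v\notin\bar F$ for some $a_t\in A$ with $a_t<v$ (the right endpoint must be the newly added $b_2$). From the cap-failure there is a topset $F'$ with $a_{k-1},b_r\in F'$ and $v\notin F'$ for some $b_r\in B$ with $b_r>v$ (the left endpoint must be the newly added $a_{k-1}$). Passing to the underlying ABA-free family $\FF$, write $F:=S\setminus\bar F\in\FF$, so that $v\in F$ while $a_t,b_2\notin F$; also $F'\in\FF$ with $v\notin F'$ and $a_{k-1},b_r\in F'$. Thus $v\in F\setminus F'$. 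The goal is to exhibit, on three vertices among $a_{k-1}<v<b_r$ (or among $a_t<v<b_2$), an occurrence of the forbidden ABA pattern for the ABA-free family $\FF$, which is the desired contradiction.

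The clean case is when both outer witnesses fall on the same side: if $a_{k-1}\in\bar F$ and $b_r\in\bar F$ (equivalently $a_{k-1},b_r\notin F$), then $a_{k-1},b_r\in F'\setminus F$ and $v\in F\setminus F'$ with $a_{k-1}<v<b_r$, an immediate ABA occurrence in $\FF$. The main obstacle is the remaining cases, where one must pin down the incidences of $a_{k-1},b_2,a_t,b_r$ in $F$ and $F'$. Here I would feed in the cup and cap orientations --- $a_{k-1}$ is below $a_tv$ and $b_2$ is above $vb_r$ --- through Observation~\ref{obs:topintop} and Observation~\ref{obs:topindown} (a bottomset through a topvertex, or a topset through a bottomvertex, swallows a whole one-sided tail), which forces enough memberships to produce the pattern. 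Conceptually this last step is exactly the four-point convexity-transitivity statement that two consecutive lower-hull turns $a_t,a_{k-1},v$ and $a_{k-1},v,b_2$ cannot leave both $a_{k-1}$ and $v$ strictly above the chord $a_tb_2$; its abstract (orientation-only) verification, rather than any of the reductions above, is where the real work lies, and where Lemma~\ref{lem:4allextremal} together with the ABA-freeness of $\FF$ are the tools I would use.
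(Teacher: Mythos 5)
Your reduction and witness extraction are sound: assuming both conclusions fail, you correctly produce $F\in\FF$ with $v\in F$ and $a_t,b_2\notin F$ (from the cup failure) and $F'\in\FF$ with $a_{k-1},b_r\in F'$ and $v\notin F'$ (from the cap failure), and your ``clean case'' is a genuine ABA occurrence. But the proposal stops being a proof exactly where you say the real work lies: the remaining cases are never carried out, and the plan you sketch for them is mis-aimed. The four-point ``convexity-transitivity'' statement you formulate lives entirely on the cup side plus $b_2$ (the points $a_t,a_{k-1},v,b_2$), and no argument confined to those points can succeed, because the cup failure by itself is perfectly realizable (in the plane one can easily append a point $b_2$ that destroys a cup while the cap side survives); any contradiction must use \emph{both} witnesses, hence must involve $b_r$ as well. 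Moreover, Lemma~\ref{lem:4allextremal} concludes that four vertices are extremal in a configuration with one point above and one below a common chord, which is not the configuration at hand and does not produce the forbidden pattern; and the transitivity statement itself would have to be proved from ABA-freeness, so invoking it is circular as a ``tool''.

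What closes the gap in one stroke --- and is precisely the key step of the paper's proof --- is to apply the interval structure of the cup and the cap \emph{across} the junction, to the opposite witness. Since $B$ is a cap, every member of $\FF$ traces an interval on $B$ (this is Observation~\ref{obs:topintop} read inside $\HH[B]$, where every vertex is a topvertex); as $F$ contains $v=b_1$ but not $b_2$, this forces $F\cap B=\{v\}$, so in particular $b_r\notin F$. Symmetrically, since $A$ is a cup, $\bar{F'}$ traces an interval on $A$; it contains $v=a_k$ but not $a_{k-1}$, so $\bar{F'}\cap A=\{v\}$, hence $a_t\in F'$. Now $a_t,b_r\in F'\setminus F$ and $v\in F\setminus F'$ with $a_t<v<b_r$ is an ABA occurrence in $\FF$ --- with no case distinction at all (your clean case is subsumed). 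This ``cross'' observation, stated in the paper as ``$\bar{F_A}$ must contain every $b\in B$, $b\ne v$'' and its mirror image, is the entire content of the lemma, and it is the idea your proposal is missing.
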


\begin{proof}	
	Suppose that the statement does not hold. Wlog. let $\HH=\FF\cup\bar{\FF}$ where $\FF$ is an ABA-free hypergraph. 
	
	As $A'$ is not a cup, some vertex $a_i$ is not a bottomvertex in $\HH[A']$ and so must be skipped by some hyperedge $\bar{F_A}$ ($F_A\in \FF$) on $\AA'$. $\bar{F_A}$ must also contain $b_2$ and some $a_j$, $j<i$. Notice that $\bar{F_A}$ cannot contain $v$ as restricted to $A$ no vertex is skipped by a bottomset (as $A$ is a cup). We also claim that $\bar{F_A}$ must contain every $b\in B, b\ne v$. Indeed, otherwise $F_A$ would contain $v$ and $b$ but avoid $b_2$ thus $b_2$ would not be a topvertex in $B$, contradicting that $B$ was a cap. Thus, $F_A\in \FF$ avoids $a_j$, contains $v$ and avoids every $b\in B, b\ne v$.
	
	Similarly, as $B'$ is not a cap, there exists a hyperedge $F_B\in \FF$ such that $F_B$ contains $a_{k-1}$ and some $b_{j'}$ (while avoids some $b_{i'}$, $i'<j'$) and must contain every $a\in A,a\ne v$.
	
	Thus, on vertices $a_j,v,b_{j'}$ $F_B$ and $F_A$ form an ABA-sequence, a contradiction.
\end{proof}

\begin{proof}[Proof of Theorem \ref{thm:pseudoesz}]
	Replacing in the original proof of Erd\H os and Szekeres the geometric argument by Lemma \ref{lem:eszlemma}, we can verbatim follow the rest of the original proof (see, e.g., \cite{tv_esz}) to conclude Theorem \ref{thm:pseudoesz}, this is left to the interested reader.
\end{proof}

We sketch a second proof for Theorem \ref{thm:pseudoesz}, suggested by A. Holmsen. First, it is easy to see that a set of vertices is a cup (resp. cap) if and only if every subset of three vertices is a cup (resp. cap). Second, we can extend the pseudohalfplane hypergraph greedily by adding new hyperedges such that at the end every triple of vertices is either a cap or a cup (but not both). One can check that two-coloring the triples according to this to get color classes $T_{cup}$ and $T_{cap}$, we get a transitive coloring, i.e., for which $(s_1, s_2, s_3), (s_2, s_3, s_4) \in T_i\Rightarrow (s_1, s_2, s_4), (s_1, s_3, s_4) \in T_i$ whenever $s_1<s_2<s_3<s_4, i\in\{cup,cap\}$.\footnote{This statement is in a sense complementary to Lemma \ref{lem:4allextremal} and can be proved similarly.} We can conclude the proof using the version of the Erd\H os-Szekeres Cup-Cap Theorem for transitive colorings (see \cite{eszmonotone,Hubard,moshkovitz,pseudoerdosszekeres}).

\subsection{Proofs of geometric variants}\label{sec:geom}
\bigskip

The main tool for proving implications of the abstract setting to the geometric setting is the following theorem from \cite{kbdiscretehelly} (we note that it is not true for loose pseudoline arrangements):

\begin{thm}\label{thm:unique}\cite{kbdiscretehelly}
	Given a simple pseudoline arrangement $\AA$, let $P$ be a set of points which has exactly one point in each face of $\AA$. Let $\HH$ be a pseudohalfplane hypergraph whose vertex set is $P$ and for each pseudoline of $\AA$ it has a hyperedge which contains the points on one side of this pseudoline. Then in every realization of $\HH$ with pseudohalfplanes the arrangement of the boundary pseudolines is equivalent to $\AA$.
\end{thm}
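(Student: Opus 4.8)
The plan is to show that the hypergraph $\HH$ by itself already pins down the combinatorial type of the arrangement, so that $\AA$ and the boundary arrangement $\AA'$ of any realization are forced to coincide. First I would fix the natural correspondences. In a realization, each hyperedge $H_i$ of $\HH$ is the set of points captured by one pseudohalfplane, whose boundary is a pseudoline; this gives a bijection between the $m=|\HH|$ hyperedges and the pseudolines of $\AA'$. (The $m$ hyperedges are pairwise distinct subsets, and no two are complementary: two complementary hyperedges would force the corresponding pseudolines to realize opposite bipartitions of $P$, which cannot happen since in $\AA$ every pair of pseudolines crosses. Hence no two pseudohalfplanes share a boundary.) Using this bijection I assign to every point $p\in P$ its sign vector $\sigma_p\in\{+,-\}^m$, whose $i$-th coordinate records whether $p\in H_i$. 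The key point is that $\sigma_p$ depends only on $\HH$, so it is the same computed from $\AA$ or from $\AA'$; and since $P$ has exactly one point per face of $\AA$, the set $\{\sigma_p : p\in P\}$ is precisely the set of cell-covectors of $\AA$.

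Next I would pin down $\AA'$ with a counting argument. A simple non-loose arrangement of $m$ pseudolines has exactly $1+m+\binom{m}{2}$ faces (by incremental construction), so $|P|=1+m+\binom{m}{2}$ and the $\sigma_p$ are pairwise distinct. In $\AA'$, two points in a common cell would have the same sign vector; since the $\sigma_p$ are distinct, the $|P|$ points lie in pairwise distinct cells of $\AA'$, forcing $\AA'$ to have at least $1+m+\binom{m}{2}$ cells. Any arrangement of $m$ pseudolines that is loose (a missing crossing) or non-simple (three pseudolines concurrent) has strictly fewer cells, so $\AA'$ is itself simple and non-loose, each of its cells contains exactly one point of $P$, and its set of cell-covectors equals $\{\sigma_p\}$, i.e.\ it coincides with the set of cell-covectors of $\AA$.

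It then remains to recover the full combinatorial type from the common covector set $\mathcal V=\{\sigma_p\}$. Here I would reconstruct the face lattice purely combinatorially: faces correspond to elements of $\mathcal V$; two faces are adjacent along an edge of $\ell_i$ exactly when their covectors differ in the single coordinate $i$; and a crossing of $\ell_i,\ell_j$ corresponds to a quadruple of covectors that agree outside $\{i,j\}$ and realize all four sign patterns on $\{i,j\}$. All incidences (face--edge, edge--vertex, and the labelling of edges and vertices by the pseudolines involved) are thereby read off from $\mathcal V$ alone. Applying this reconstruction to both $\AA$ and $\AA'$, the identity map on $\mathcal V$ induces an adjacency-preserving bijection of faces, edges, vertices and pseudolines, which is exactly a combinatorial equivalence $\AA\cong\AA'$.

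The main obstacle is this last step: verifying that the covector set faithfully encodes the combinatorial type, in particular the lemma that in a simple arrangement covectors differing in one coordinate are genuinely adjacent, and that the crossing quadruples recover exactly the vertices with their correct cyclic incidences. One way to make this elementary and self-contained, matching the $x$-monotone setting of the paper, is to recover instead the \emph{allowable sequence} of the arrangement: sweeping a vertical line from left to right, the top-to-bottom order of the pseudolines within each vertical slab can be read from the covectors of the vertically stacked cells, and the sequence of adjacent transpositions occurring at the crossings determines the arrangement up to equivalence. Alternatively one may invoke that the topes determine a rank-$3$ oriented matroid and that such matroids correspond to simple pseudoline arrangements up to equivalence, but I would prefer the direct sweep argument to keep the proof combinatorial.
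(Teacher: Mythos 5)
First, a point of comparison that matters here: this paper does not actually prove Theorem \ref{thm:unique} — it is quoted, with proof, from the companion paper \cite{kbdiscretehelly}. So there is no in-paper argument to measure you against, and your proposal has to stand on its own. It does not quite do so yet.

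What you have is correct and nicely argued up to the counting step: the bijection between hyperedges and boundary pseudolines, the sign vectors $\sigma_p$, and the argument that $|P|=1+m+\binom{m}{2}$ forces any realization's boundary arrangement $\AA'$ to be simple, non-loose, with exactly one point of $P$ per cell and the same covector set $\mathcal{V}$ as $\AA$. The genuine gap is the step you yourself flag as ``the main obstacle'', and it is not a technicality — it is the entire content of the theorem once the bookkeeping is done. Your reconstruction rests on two converses that you assert but do not prove: (i) if two cells' covectors differ in exactly the coordinate $i$, then they share an edge on $\ell_i$; and (ii) the quadruple of covectors around the crossing of $\ell_i,\ell_j$ is the \emph{unique} quadruple in $\mathcal{V}$ agreeing outside $\{i,j\}$ and showing all four patterns. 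Both reduce to the fact that in a non-loose pseudoline arrangement every sign region is connected (equivalently: distinct cells have distinct covectors, and a cell's closure meets each pseudoline in a single arc). For straight lines this is convexity; for pseudolines it needs a genuine topological argument exploiting ``every two pseudolines cross exactly once'' (e.g., if a cell's closure met $\ell_i$ in two disjoint arcs, some other pseudoline would be forced to cross $\ell_i$ twice). Worse, this same missing fact is already used \emph{silently} in your counting step: ``the $\sigma_p$ are pairwise distinct'' is precisely the statement that distinct faces of $\AA$ have distinct covectors, which is immediate for lines but not for pseudolines. So the gap infects step 2 as well, not just step 3.

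Neither of your fallback routes closes the hole as stated. The sweep/allowable-sequence route is circular: $\mathcal{V}$ is an unordered set of sign vectors, and to read off ``the covectors of the vertically stacked cells'' you need to know which cells are vertically adjacent — which is exactly the incidence structure you are trying to reconstruct; each arrangement's sweep order is extra geometric data not shared between $\AA$ and $\AA'$. The oriented-matroid route (the topes of $\mathcal{V}$ determine a rank-$3$ oriented matroid, which by the topological representation theorem determines the arrangement up to equivalence) is correct and would complete the proof, but it imports two substantial theorems, at which point the argument is essentially a citation rather than the elementary combinatorial proof you were aiming for. To finish in your preferred style, you should isolate and prove the connectedness-of-sign-regions lemma (by induction on the number of pseudolines, using the crossing-exactly-once property and a Jordan-type argument), after which (i), (ii), and the distinctness of the $\sigma_p$ all follow and your reconstruction goes through.
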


First we show that Lemma \ref{lem:discretelevi}, which we called a discrete Levi's enlargement lemma, indeed implies the original Levi's enlargement lemma for pairs of points that do not lie on any of the pseudolines, that is, given a pseudoline arrangement $\AA$ and points $p$ and $q$ that do not lie on any of the pseudolines, we can add a new pseudoline to the arrangement that contains both $p$ and $q$. To see this, wlog. we can assume that $p$ and $q$ are in different faces of the arrangement. Let $P$ be a point set that contains $p$, $q$ and contains exactly one point in every face of the arrangement $\AA$. Let $\HH$ be the hypergraph whose vertex set is $P$ and there is a hyperedge for each pseudoline containing the points on one of its sides, this is a pseudohalfplane hypergraph (\cite{abafree}, Proposition A.1). Duplicate $p$ and $q$ to get the hypergraph $\HH'$ as in Lemma \ref{lem:discretelevi} on ordered vertex set $P\cup\{p',q'\}$ such that $p'$ (resp. $q'$) is immediately after $p$ (resp. $q$) in the vertex order and a hyperedge contains $p'$ (resp. $q'$) if and only if it contains $p$ (resp. $q$). 

Now we can apply Lemma \ref{lem:discretelevi} on $\HH$ to get the set $X$ which we add as a hyperedge to $\HH'$ to get $\HH''$, a pseudohalfplane hypergraph. There exists a realization of $\HH''$ by pseudohalfplanes (\cite{abafree}, Proposition A.1). In this realization, by Theorem \ref{thm:unique}, the boundaries of the pseudohalfplanes must form an arrangement equivalent to $\AA$ plus an additional pseudoline $l_x$ corresponding to $X$. Observe that $p'$ (resp. $q'$) must be in the same face as $p$ (resp. $q$) in $\AA$. However, the pseudohalfplane corresponding to $X$ contains exactly one of each pair, thus $l_x$ must cross the faces of $\AA$ that contain $p$ and $q$. Now we can redraw locally $l_x$ inside these faces such that $l_x$ goes through $p$ and $q$, as required, while it intersects the same edges of the arrangement the same way as before.

\bigskip

Now we proceed by proving the geometric theorems.

\begin{proof}[Proof of Theorem \ref{thm:weakhellypsconvexplane}]
	Let $P$ be a point set that contains exactly one point in every face of the arrangement of the boundary pseudolines of the pseudohalfplanes of $\EE$. Let $\HH$ be the hypergraph whose vertex set is $P$ and whose hyperedges are the sets $F\cap P$ for $F\in \EE$. $\HH$ is a pseudohalfplane hypergraph (\cite{abafree}, Proposition A.1). Moreover by definition, the sets $C\cap P$ for $C\in \CC$, are convex sets of $\HH$. Thus, Theorem \ref{thm:weakhellypsconvex} implies that $\HH$ can be extended by a vertex $v$ to get the pseudohalfplane hypergraph $\HH'$ such that $v$ is in the intersection of all the extensions of $C\cap P$ for $C\in \CC$.
	Every pseudohalfplane hypergraph has a realization with pseudohalfplanes (\cite{abafree}, Proposition A.1), thus $\HH'$ can be realized as well. It is easy to see that we can even realize it such that whenever two hyperedges are each other's complements then the boundaries of the two corresponding pseudohalfplanes coincide.
	If we forget the image of $v$ from this realization, we get a realization of $\HH$. Theorem \ref{thm:unique} implies that this realization of $\HH$ is equivalent to $\EE$. As in this realization of $\HH'$ the image of $v$ must be in every member of $\CC$, in the realization of $\HH$ this point also must be in every member of $\CC$, and so it is a point as required.
\end{proof}

\begin{proof}[Proof of Theorem \ref{thm:pseudocaratheodoryplane}]
	Let $P$ be a minimal size superset of $P'\cup\{p\}$ such that $P$ contains at least one point in every face of the arrangement of the boundary pseudolines of the pseudohalfplanes of $\EE$. Let $\HH$ be the hypergraph whose vertex set is $P$ and whose hyperedges are the sets $F\cap P$ for $F\in \EE$. $\HH$ is a pseudohalfplane hypergraph (\cite{abafree}, Proposition A.1). Moreover by definition, the sets $C\cap P$ for $C\in \CC$, are the convex sets of $\HH$.
	
	We need to show that $p$ is strongly inside the convex hull of $P$ with respect to $\HH$. Assume on the contrary, then we can extend $\HH$ with a hyperedge $H$ such that with respect to $\HH'=\HH\cup\{H\}$ $p\notin Conv(P)$. We can realize $\HH'$ with points and pseudohalfplanes, then if we forget the image of $H$ from this realization, we get a realization of $\HH$. Theorem \ref{thm:unique} implies that this realization of $\HH$ is equivalent to $\EE$.
	Thus in this realization the image of $H$ gives a pseudohalfplane $E$ such that $p\notin Conv(P)$ when extending $\EE$ with $E$, a contradiction.
	
	Thus Theorem \ref{thm:pseudocaratheodory} implies that there exists a $P''\subseteq P'$, $|P''|\le 3$ such that $p\in Conv(P'')$ with respect to $\HH$. This implies that $p\in Conv(P'')$ also with respect to $\EE$, as claimed.
	
	Moreover, Theorem \ref{thm:pseudocaratheodory} implies that the vertices of $P''$ are extremal in $P'$ with respect to $\HH$. This implies, using an indirect argument like before, that the points of $P''$ are extremal in $P$ with respect to $\EE$.
\end{proof}

\begin{proof}[Proof of Theorem \ref{thm:radonplane}]
	Again, let $P$ be a minimal size superset of $P'\cup\{p\}$ such that $P$ contains at least one point in every face of the arrangement of the boundary pseudolines of the pseudohalfplanes of $\EE$. Let $\HH$ be the hypergraph whose vertex set is $P$ and whose hyperedges are the sets $F\cap P$ for $F\in \EE$. $\HH$ is a pseudohalfplane hypergraph (\cite{abafree}, Proposition A.1). Moreover by definition, the sets $C\cap P$ for $C\in \CC$, are the convex sets of $\HH$.
	
	By Theorem \ref{thm:radon} we can extend $\HH$ with a new vertex $v$ so that it is still a pseudohalfplane hypergraph $\HH'$ and there is a partition of $P'$ into two subsets such that the convex hulls of these subsets (with respect to $\HH$) both contain $v$. We can realize $\HH'$ with points and pseudohalfplanes, then if we forget the image of $v$ from this realization, we get a realization of $\HH$. Theorem \ref{thm:unique} implies that this realization of $\HH$ is equivalent to $\EE$. Thus in this realization the image of $v$ gives a point $p$ such that in the same partition of $P'$ into two subsets $p$ is in the convex hulls of these subsets (with respect to $\EE$), as required.
\end{proof}	

\subsection{Proofs of optimality of Helly-theorems}\label{sec:constr}

We now show that just like the original Helly's theorem in the plane, Theorem \ref{thm:weakhellypsconvex} and Lemma \ref{lem:weakhellypshp} is optimal in the sense that we cannot replace $3$ with $2$ and still hope for one new vertex to hit all the convex sets:

\begin{claim}\label{claim:no2-1}
	There exists a pseudohalfplane hypergraph $\HH$ on vertex set $S$ such that its  hyperedges are pairwise intersecting yet in any extension $\HH'$ of $\HH$ with a new vertex $w$ there is a hyperedge $H\in \HH$ for which $H\cup\{w\}$ is not a convex set of $\HH'$ (in which case it is also not a hyperedge of $\HH'$).
\end{claim}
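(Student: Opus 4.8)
The plan is to realize the classical Helly obstruction for three pairwise-intersecting convex sets with empty common intersection, and then to make the realization \emph{rigid} via Theorem \ref{thm:unique} so that no merely combinatorial extension can cheat. Concretely, I would take an arrangement $\AA$ of three lines in general position, so that they bound a triangle and the arrangement has exactly $7$ faces. Let $S=P$ consist of one point placed in each of these $7$ faces, and let $\HH$ be the pseudohalfplane hypergraph on $P$ whose three hyperedges $H_1,H_2,H_3$ are the three \emph{outer} halfplanes, i.e.\ for each line the side not containing the triangle; this is a pseudohalfplane hypergraph since it is realized by three genuine halfplanes (\cite{abafree}, Proposition A.1). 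Encoding faces by their sign vectors $(s_1,s_2,s_3)\in\{+,-\}^3$ with $+$ the outer side, the vector $(+,+,+)$ is exactly the one missing from a triangle arrangement, while all six sign vectors with one or two $+$'s occur. Hence $H_i\cap H_j$ is the single point in the corresponding two-$+$ face, so the three hyperedges pairwise intersect, whereas $H_1\cap H_2\cap H_3=\emptyset$. Moreover each $H_i$ contains three points and meets each other $H_j$ in a single point, so no hyperedge contains another.

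Next I would argue that $w$ cannot be added to all three hyperedges. Suppose for contradiction that some extension $\HH'$ of $\HH$ by a vertex $w$ has $H_i\cup\{w\}$ as a hyperedge for every $i$. Realizing $\HH'$ by points and pseudohalfplanes (\cite{abafree}, Proposition A.1) and then deleting the image of $w$ yields a realization of $\HH$ by three pseudohalfplanes on $P$. Since $P$ has exactly one point in each face of $\AA$ and $\HH$ carries one hyperedge per line, Theorem \ref{thm:unique} forces the boundary arrangement of this realization to be equivalent to $\AA$. In any arrangement equivalent to $\AA$ no cell lies on the outer side of all three pseudolines, yet $w\in H_i$ for every $i$ would place the image of $w$ in exactly such a cell, a contradiction. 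Thus in every extension by $w$ at least one hyperedge, say $H$, does not receive $w$.

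Finally I would upgrade ``$H$ does not receive $w$'' to ``$H\cup\{w\}$ is not a convex set of $\HH'$''. Since $\HH'$ is an extension by a single vertex, its hyperedges are exactly the extensions of $H_1,H_2,H_3$. If $H\cup\{w\}$ were a convex set, it would be an intersection of hyperedges of $\HH'$ each containing $H\cup\{w\}$, hence each containing $H$ and $w$; but the only hyperedge of $\HH$ containing $H$ is $H$ itself, whose extension omits $w$, while an empty intersection gives $\cap\emptyset=S\neq H\cup\{w\}$. So no such intersection exists and $H\cup\{w\}$ is not a convex set, as required. The main obstacle is the second step: the purely combinatorial notion of extension is strictly more permissive than the geometric one, so the empty common intersection of the three halfplanes does not by itself forbid adjoining $w$ combinatorially. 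The crucial device is to put a point in \emph{every} face of $\AA$, which is precisely the hypothesis that lets Theorem \ref{thm:unique} pin the arrangement down and thereby rule out the nonexistent $(+,+,+)$ cell.
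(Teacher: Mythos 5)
Your proof is correct, and it takes a genuinely different route from the paper's. The paper works with a four-vertex example, $S=\{a,b,c,v\}$ (geometrically a triangle with its center), whose six hyperedges are the pairs from $\{a,b,c\}$ with and without $v$; after the same reduction you make from ``convex set of $\HH'$'' to ``hyperedge of $\HH'$'', it shows that adding $w$ to \emph{all} hyperedges ruins the pseudohalfplane property by an exhaustive case analysis over all vertex orders and complementation patterns, carried out with a computer program. You instead realize the classical geometric Helly obstruction (the three outer halfplanes of a triangle arrangement, with one point in each of the seven faces) and rule out the bad extension by rigidity: realize $\HH'$ by Proposition A.1 of \cite{abafree}, forget $w$, invoke Theorem \ref{thm:unique} to force the boundary arrangement to be equivalent to $\AA$, and observe that the all-outer cell does not exist. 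What each approach buys: the paper's example is smaller and stays entirely inside the combinatorial framework, but delegates the crucial non-extendability step to machine enumeration; yours imports two substantial results (realizability and the rigidity theorem) but needs no computer and explains \emph{why} the extension fails, mirroring the planar counterexample directly. Two points you should spell out. First, the assertion ``no cell lies on the outer side of all three pseudolines'' needs a sentence: the equivalence with $\AA$ gives exactly seven faces, the seven points of $P$ have pairwise distinct signatures with respect to $H_1,H_2,H_3$ and hence occupy seven distinct faces, one per face, so the faces' sign vectors are exactly the seven vectors other than $(+,+,+)$; this is what excludes a face inside all three pseudohalfplanes (and a position of the image of $w$ on a boundary pseudoline is handled by perturbing it into an adjacent face on the open side of all three). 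Second, a triviality: in $\HH'$ the empty intersection is $S\cup\{w\}$, not $S$, which changes nothing. Incidentally, your argument never uses the topset/bottomset bookkeeping of Definition \ref{def:ext}, so it proves the slightly stronger statement that \emph{no} pseudohalfplane hypergraph on $S\cup\{w\}$ tracing $H_1,H_2,H_3$ on $S$ can contain $w$ in all three hyperedges; one can even avoid Theorem \ref{thm:unique} altogether by noting that three boundary pseudolines bound at most seven faces, which the seven points already exhaust.
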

\begin{proof}	
	Let the vertex set of $\HH$ be $ a,b,c,v$. Let the hyperedges of $\HH$ be the subsets of size $2$ of $\{a,b,c\}$ with and without $v$. This hypergraph on $4$ vertices and $6$ hyperedges is a pseudohalfplane hypergraph, as we can realize it with points and halfplanes in the plane. Indeed, let the points $a,b,c$ be the vertices of an equilateral triangle and $v$ its center. Moreover, every pair of hyperedges has a common vertex. Let $\HH'$ be the hypergraph that we get from $\HH$ by adding $w$ to the vertex set and adding $w$ to some of the hyperedges. Assume on the contrary that for every hyperedge $H\in \HH$ we have that $H\cup\{w\}$ is a convex set of $\HH'$, that is, the intersection of some of the hyperedges of $\HH'$. It is easy to see that the only way for this is that in fact all these sets are hyperedges of $\HH'$, that is, we had to add $w$ to all of the hyperedges of $\HH$ to get $\HH'$.
	We claim that $\HH'$ is not a pseudohalfplane hypergraph. This can be checked by a tedious case analysis where one needs to check for every possible order of the $5$ vertices and for every possible subfamily of the $6$ hyperedges if complementing these hyperedges gives us an ABA-free hypergraph with the given order. We have done this with the help of a computer program.
	
	In case of Lemma \ref{lem:weakhellypshp} the fact that $\HH'$ is not a pseudohalfplane hypergraph immediately shows what we wanted while for Theorem \ref{thm:weakhellypsconvex} we set $\CC=\HH$ and then clearly the only way to get any set $C$ in $\CC$ as an intersection of a subfamily of hyperedges of $\HH$ is if this subfamily contains $C$ and thus to extend every convex set $C$ with $w$ we actually need to extend every hyperedge with $w$.
\end{proof}

Note that an ABA-free hypergraph can be always extended with a vertex that is in every hyperedge (and thus in every convex set of the hypergraph), thus for ABA-free hypergraphs the conclusion of Helly's theorem holds even without any additional assumptions. Similarly, for pseudohalfplanes two additional vertices can always hit every hyperedge without any additional assumptions. Indeed, for any pseudohalfplane hypergraph we can add a vertex which is in exactly the topsets and another which is in exactly the bottomsets, and it is easy to see that it remains a pseudohalfplane hypergraph while these two additional vertices together hit all hyperedges. Thus Lemma \ref{lem:weakhellypshp} is the best meaningful statement in this sense.

On the other hand for convex sets in the plane there is a well-known simple counterexample even for $2\rightarrow +k$: let the $n$ convex sets be $n$ lines in the plane in general position, these are pairwise intersecting yet cannot be hit by less than $n/2$ points. We can convert this construction to the pseudoconvex setting to give also there a counterexample for $2\rightarrow +k$:

\begin{claim}\label{claim:no2-k}
	For every $k$ there exists a pseudohalfplane hypergraph $\HH$ on vertex set $S$ and a subfamily of its convex sets $\CC=\{C_1,\dots, C_l\}$ such that $C_i=\cap \HH_i$ for a subfamily $\HH_i$ of the hyperedges, such that there exists no extension $\HH'$ of $\HH$ onto vertex set $S\cup W$ with $|W|=k$ (for each hyperedge $H$ of $\HH$ and every $w\in W$ we either add $w$ to $H$ or not) such that for all $i$ we have $\cap \HH_i'\cap W\ne \emptyset$ (where $\HH_i'$ contains the extensions of the hyperedges in $\HH_i$, note that $\cap \HH_i'\cap S=C_i$).
\end{claim}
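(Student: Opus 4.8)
The plan is to mimic the planar counterexample of $n=2k+1$ lines in general position, transported to the pseudoconvex setting via the realization machinery of Theorem~\ref{thm:unique}. The only genuine difficulty is that a straight line, being the intersection of its two bounding \emph{complementary} halfplanes, would be represented by a complement-pair of hyperedges; by Definition~\ref{def:ext} complement-pairs stay complement-pairs under every extension, so the extension of such a $C_i$ is forced to be empty and the statement would become vacuous rather than a real $2\to +k$ obstruction. To avoid this degeneracy I would \emph{thicken} each line into a thin strip, i.e.\ represent $C_i$ as the intersection of two \emph{non}-complementary halfplanes.

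Concretely, I would fix $n=2k+1$ straight lines $m_1,\dots,m_n$ in general position (no two parallel, no three concurrent) and replace each $m_i$ by a strip $R_i$ bounded by two lines parallel to $m_i$ at distance $\varepsilon$, choosing $\varepsilon$ small enough (and perturbing to keep the arrangement simple) that no point of the plane lies in three strips simultaneously; this is possible since the $m_i$ have no common triple point. Let $\AA$ be the resulting simple arrangement of the $2n$ boundary lines, let $P$ contain exactly one point in each face of $\AA$, and let $\HH$ be the halfplane hypergraph on $P$, which is a pseudohalfplane hypergraph by (\cite{abafree}, Proposition A.1). For each $i$ I set $\HH_i=\{H_{i,1},H_{i,2}\}$, where $H_{i,1},H_{i,2}$ are the two halfplane-hyperedges whose intersection is $R_i$, and $C_i=\cap\HH_i=R_i\cap P$. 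Since $R_i\cap R_j$ is a nonempty open parallelogram it is a union of faces of $\AA$, hence contains a point of $P$, so the $C_i$ are pairwise intersecting and this is a genuine $2\to +k$ example.

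The heart of the argument is to rule out any extension by $k$ new vertices. Suppose toward a contradiction that $\HH'$ extends $\HH$ to $S\cup W$ with $|W|=k$ and $\cap\HH_i'\cap W\neq\emptyset$ for every $i$. I would realize $\HH'$ by points and pseudohalfplanes (again by \cite{abafree}, Proposition A.1); deleting the images of $W$ yields a realization of $\HH$, whose boundary arrangement is combinatorially equivalent to $\AA$ by Theorem~\ref{thm:unique}. A vertex $w\in W$ lies in $C_i^+=H_{i,1}^+\cap H_{i,2}^+$ exactly when its image lies in the strip $R_i$; since the arrangement is equivalent to $\AA$ and, by the choice of $\varepsilon$, no cell of $\AA$ lies in three strips, each $w$ can lie in at most two of the strips. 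Hence the $k$ new vertices meet at most $2k$ of the sets $C_1^+,\dots,C_n^+$, leaving at least one of the $n=2k+1$ sets unmet, the desired contradiction.

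I expect the main obstacle to be precisely the complement-pair degeneracy noted above: one must resist encoding the lines directly and instead verify that the thickened strips (i) still force the ``at most two'' incidence bound, which rests on the thinness of $\varepsilon$ together with the combinatorial rigidity supplied by Theorem~\ref{thm:unique}, and (ii) remain pairwise intersecting. Once these two points are secured, the remainder is a routine transcription of the classical line construction, and taking $n=2k+1$ immediately yields the count $2k<n$ that defeats every admissible set $W$ of size $k$.
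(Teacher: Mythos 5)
Your construction cannot invoke Theorem~\ref{thm:unique}, and this is where the proof breaks. The arrangement $\AA$ of the $2n$ strip boundaries contains $n$ pairs of parallel lines, so it is a \emph{loose} pseudoline arrangement in the paper's terminology, and the paper states explicitly (right before Theorem~\ref{thm:unique}) that the uniqueness-of-realization theorem fails for loose arrangements. Your parenthetical ``perturbing to keep the arrangement simple'' only addresses three-fold concurrences, not parallelism. Consequently, when you realize $\HH'$ and delete the images of $W$, you cannot conclude that the boundary arrangement is equivalent to $\AA$: the adversary may realize the two boundaries of a ``strip'' as \emph{crossing} pseudolines, turning the pseudo-strip into a wedge, and nothing in your argument rules out three such pseudo-regions sharing a cell that contains no image of $S$ but does contain the image of some $w\in W$ --- which is exactly the configuration your counting step must exclude. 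If instead you perturb away the parallelism so that all $2n$ lines pairwise cross (making $\AA$ simple and non-loose, so that Theorem~\ref{thm:unique} does apply), then each strip becomes an unbounded thin wedge, and your key property ``no point of the plane lies in three strips'' no longer follows from the thinness of $\varepsilon$ alone: wedges widen linearly at infinity, so you need an additional argument (e.g., that far from all pairwise intersection points of the $m_i$ a point can be close, relative to its distance from the configuration, to at most one of the lines, so for a small enough wedge angle no triple intersection exists). This is fixable, but it is a real missing step, and it is precisely the step on which your whole counting argument rests.

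Two further remarks. First, your motivating worry about complement-pairs is unfounded: in the paper's construction the points of $S$ lying \emph{on} line $m_i$ belong to both closed halfplanes bounded by $m_i$, so those two hyperedges intersect in $C_i$ and are not a complement-pair; the degeneracy you describe does not occur. Second, the paper's proof avoids realization and rigidity entirely: it places the $\binom{2k+1}{2}$ intersection points into $S$, takes $C_i$ to be the collinear points on $m_i$ and $\HH_i$ to be \emph{all} hyperedges containing $C_i$, and then argues combinatorially --- by pigeonhole some $w\in W$ hits three sets $C_i'$, and restricting to the three pairwise intersection points of the corresponding lines plus one point inside their triangle, $w$ would have to lie in all six hyperedges of an induced copy of the hypergraph of Claim~\ref{claim:no2-1}, which that claim forbids. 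Because your $\HH_i$ consists of only two hyperedges, the purely combinatorial constraints on $w$ are much weaker, which is why your route genuinely needs the geometric rigidity of Theorem~\ref{thm:unique}; if you want to keep your strip construction, repairing the wedge argument above is unavoidable, whereas switching to the paper's choice of $C_i$ and $\HH_i$ makes the whole issue disappear.
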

\begin{proof}
	We shall mimic the geometric example. For that take $m=2k+1$ lines in general position in the plane and let $S$ be the set of their intersection points plus one point in each face of the arrangement defined by these lines. Take the (pseudo)halfplane hypergraph defined by the halfplanes on this point set. Thus we get a pseudohalfplane hypergraph $\HH$ on vertex set $S$. Let $C_i$ be the set of vertices on the $i$th line for $1\le i\le m$ and let $\HH_i$ be the subfamily of all hyperedges that contain $C_i$. We claim that these have the required properties. 
	
	Assume on the contrary that we can extend $\HH$ to $W$, a set of $k$ vertices, such that these $k$ vertices hit every $C_i'=\cap \HH'_i$. Then they also need to hit every hyperedge in $\HH_i'$. Now by the pigeonhole principle there are $3$ different convex sets of $\CC$ that are hit by the same added vertex $w\in W$. Now take the intersection points of the three corresponding lines plus one point inside the triangle determined by them. It is easy to see that the hypergraph induced by the four vertices corresponding to these points contains a copy $\HH_0$ of the hypergraph from the proof of Claim \ref{claim:no2-1}. Moreover, $w$ must hit every hyperedge corresponding to the $6$ hyperedges of this copy of $\HH_0$. However, Claim \ref{claim:no2-k} shows that this is not possible, finishing the proof.	
\end{proof}

Notice that this is a slightly weaker type of counterexample then the one in Claim \ref{claim:no2-1} as there we did not require that the extended convex sets are the intersections of the extensions of the same subfamilies of (extended) hyperedges.

\smallskip
Finally, about pseudohemisphere hypergraphs, we show that Theorem \ref{thm:weakhellypsconvex} and Lemma \ref{lem:weakhellypshp} is optimal in the sense that we cannot replace $4$ with $3$ and still hope for one new vertex to hit all the convex sets:

\begin{claim}
	There exists a pseudohemisphere hypergraph $\HH$ on vertex set $S$ such that every triple of its hyperedges intersects yet in any pseudohemisphere extension $\HH'$ of $\HH$ with a new vertex $w$ there is a hyperedge $H\in \HH$ for which $H\cup\{w\}$ is not a convex set of $\HH'$ (in which case it is also not a hyperedge of $\HH'$).
\end{claim}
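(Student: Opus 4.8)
The plan is to build a small pseudohemisphere hypergraph from the classical tight example for the Helly number of hemispheres, and then to imitate the forcing argument of Claim \ref{claim:no2-1}. Take four hemispheres $E_1,E_2,E_3,E_4$ on the sphere whose centers $c_1,\dots,c_4$ are the vertices of a regular tetrahedron inscribed in the sphere, so that $\langle c_i,c_j\rangle=-\tfrac13$ for $i\neq j$ and $\sum_i c_i=0$. Since $\sum_i\langle p,c_i\rangle=0$ for every $p$, the four values $\langle p,c_i\rangle$ cannot all be positive, hence $\bigcap_i E_i=\emptyset$; on the other hand the point $p_\ell:=-c_\ell$ satisfies $\langle p_\ell,c_i\rangle=\tfrac13>0$ for every $i\neq\ell$ and $\langle p_\ell,c_\ell\rangle=-1<0$, so $p_\ell$ lies in exactly the three hemispheres $E_i$ with $i\neq\ell$. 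Thus every triple of the $E_i$ has a common point while all four do not. Let $P$ be a point set containing exactly one point in each face of the arrangement of the four bounding great circles, chosen so that the four witnesses $p_1,\dots,p_4$ are among them, and let $\HH$ be the hypergraph on $P$ with the four hyperedges $H_i=E_i\cap P$. By the realizability results of \cite{abafree} this $\HH$ is a pseudohemisphere hypergraph, and by construction every one of its $\binom{4}{3}=4$ triples of hyperedges intersects (the triple omitting $E_\ell$ contains $p_\ell$). Note that, in contrast with Claim \ref{claim:no2-1}, here we genuinely need the tetrahedral geometry, since the pseudohalfplane example of Claim \ref{claim:no2-1} is only pairwise, not triple-wise, intersecting.

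Next I would carry out the convex-set forcing exactly as in Claim \ref{claim:no2-1}. Let $\HH'$ be any pseudohemisphere extension of $\HH$ by a new vertex $w$; by the (pseudohemisphere analogue of) Definition \ref{def:ext} its hyperedges are in bijection with those of $\HH$, so each is either $H_i$ or $H_i\cup\{w\}$, and the convex sets of $\HH'$ are the intersections of these four sets. For a fixed $i$ and any subfamily $J$ of indices, the witness $p_j$ lies in $H_i$ but not in $H_j$ whenever $j\neq i$, so including any index $j\neq i$ in $J$ strictly shrinks $H_i$, while $J=\emptyset$ yields all of $P\neq H_i$; hence the only way to obtain $H_i$ as an intersection at the level of $S$ is $J=\{i\}$. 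Consequently, if $H_i\cup\{w\}$ is a convex set of $\HH'$ then necessarily $w\in H_i^+$. Therefore, if every $H_i\cup\{w\}$ were convex in $\HH'$, then $w$ would have been added to all four hyperedges, i.e. $\HH'$ would be the hypergraph $\HH^+$ obtained from $\HH$ by inserting $w$ into each $H_i$.

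It then remains to show that this $\HH^+$ is \emph{not} a pseudohemisphere hypergraph, which is the main obstacle. The intended argument is that a pseudohemisphere realization of $\HH^+$ would, after deleting the image of $w$, be a realization of $\HH$ in which (by a uniqueness statement for sphere arrangements analogous to Theorem \ref{thm:unique}) the four bounding pseudo-great-circles form an arrangement equivalent to our original one; then the image of $w$, lying in all four hemispheres, would occupy the common region $\bigcap_i E_i=\emptyset$, a contradiction. The delicate point is precisely this uniqueness of the realization, which in the excerpt is only stated for pseudoline arrangements and must be transported to the spherical setting; should that transport prove awkward, the fallback is to restrict to the induced subhypergraph on the finitely many relevant vertices and verify by the same kind of finite case analysis (or computer check over all choices of the twist set $X$ and all vertex orderings, using that $\HH^+$ is a pseudohemisphere hypergraph iff $\HH^+\Delta X$ is a pseudohalfplane hypergraph for some $X$) that no valid structure exists, exactly as was done for Claim \ref{claim:no2-1}. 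Either way, the failure of $\HH^+$ to be a pseudohemisphere hypergraph contradicts the assumption, so some $H\in\HH$ must have $H\cup\{w\}$ non-convex in every extension $\HH'$, establishing the claim and the tightness of the $4\to+1$ threshold.
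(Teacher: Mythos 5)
Your construction and your forcing step are correct, and they in fact coincide with the paper's: the hypergraph you build (one point in each of the $14$ cells of the arrangement of the four great circles, four hemisphere hyperedges) is exactly the paper's example, which it describes as the dual of the hypergraph on four vertices whose hyperedges are all $14$ nonempty proper subsets; and your argument that $H_i\cup\{w\}$ can only be a convex set of $\HH'$ if $w$ was added to $H_i^+$ is the same forcing used there and in Claim \ref{claim:no2-1}. The genuine gap is the final step, which you explicitly leave open: proving that the extension $\HH^+$ in which $w$ lies in all four hyperedges is \emph{not} a pseudohemisphere hypergraph. Neither of your two proposed routes closes it. The spherical analogue of Theorem \ref{thm:unique} is nowhere established -- the paper proves that uniqueness statement only for simple pseudoline arrangements in the plane (and notes it already fails for loose arrangements) -- so invoking it is an unsupported appeal. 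The fallback of a finite case analysis on "the relevant vertices" also does not work: induced subhypergraphs of pseudohemisphere hypergraphs are again pseudohemisphere hypergraphs, and the natural small candidates do not exhibit any obstruction; for instance, $\HH^+$ restricted to the four witnesses $p_1,\dots,p_4$ together with $w$ has as hyperedges exactly the complements of the four singletons $\{p_i\}$, and since a family of singletons is trivially ABA-free this restriction \emph{is} a pseudohemisphere hypergraph (take $X=\emptyset$). The obstruction genuinely requires (essentially) all $15$ vertices, so a check in the style of Claim \ref{claim:no2-1}, which was feasible on $5$ vertices and $6$ hyperedges, is not a realistic or executed verification here.

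What you are missing is that no uniqueness statement is needed at all: realizability plus a counting argument suffices, and you already have both ingredients on the table. If $\HH^+$ were a pseudohemisphere hypergraph, it would have a realization by points and pseudohemispheres on the sphere (\cite{abafree}). Its $15$ vertices have pairwise distinct incidence vectors with the four hyperedges: the $14$ old vertices realize all sign patterns except "in all four" and "in none" (your own computation $\sum_i\langle p,c_i\rangle=0$, together with central symmetry, rules those two out), while $w$ has the pattern "in all four". Points with distinct incidence vectors must lie in distinct cells of the arrangement of the four boundary pseudocircles, so that arrangement would need at least $15$ cells; but four closed curves on the sphere that pairwise cross exactly twice determine at most $4\cdot 3+2=14$ cells. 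This contradiction is exactly how the paper finishes the proof, with no computer search and no appeal to uniqueness of the realization.
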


\begin{proof}
	Let $\HH^*$ be the hypergraph on four vertices that has all subsets as hyperedges except the empty set and the set containing all four vertices. This hypergraph with $14$ hyperedges on $4$ vertices is a pseudohemisphere hypergraph. We can verify this by definition: take $X$ to be the set of the 2nd and 4th vertex in the ordering and $\FF$ be the ABA-free hypergraph containing the size-$2$ hyperedges that contain the first vertex and all the size-$3$ hyperedges; alternatively, we can realize it even by points and hemispheres: put the points on the sphere as vertices of a tetrahedron and then drawing the hemispheres is quite straightforward. Now take the dual $\HH$ of $\HH^*$ (containments are reversed), which has $4$ hyperedges, it is also a pseudohemisphere hypergraph (\cite{abafree}, Proposition 4.4). Every triple of these $4$ hyperedges intersects as this is equivalent to the fact that in $\HH^*$ every triple of points is contained in some hyperedge. If in an extension $\HH'$ of $\HH$ with a vertex $w$ for a hyperedge $H$ of $\HH$ we have that $H\cup\{w\}$ is a convex set of $\HH'$, then this can happen only if $H\cup\{w\}$ is a hyperedge of $\HH'$. Thus, we are left to prove that we cannot extend $\HH$ with a vertex $w$ contained in all $4$ hyperedges. To see this, suppose such an extension exists, then taking again the dual we get a ${\HH^*}'$ with $15$ hyperedges, which has the same hyperedges as $\HH^*$ plus an additional hyperedge containing all four vertices. This has a realization with points and pseudohemispheres in which the boundary pseudocircles define at least $15$ cells, contradicting the easy to see fact that four pseudocircles define at most $14$ cells on the sphere.
\end{proof}

\section{Relations to other extensions of convexity}\label{sec:relations}

\textbf{Topological affine planes (TAPs).}
Let us first summarize the framework of this paper. Given a finite family of pseudohalfplanes and a set of points in the plane, by \cite{abafree} we can define the corresponding pseudohalfplane hypergraph. In this setting we define convex subsets of the vertices and show various combinatorial results which mimic classical results of convexity in the plane. These in turn imply the respective result about convex sets of the pseudohalfplane family. 

However, it is possible to go in the opposite direction (i.e., more geometry instead of less) to arrive to the same conclusion. A topological affine plane (TAP) is basically a continuous extension of a pseudoline arrangement, where every pair of points lies on a unique pseudoline. We provide here the exact definition of a topological affine plane for the convenience of the reader, based on \cite{TAPconvex}. The pseudolines of a TAP on the plane are the interiors of an infinite collection of simple curves (Jordan arcs) on the extended projective plane (we can regard this as a closed disk whose boundary forms the points `at infinity' and whose interior is homeomorphic to the plane) with the following properties. The endpoints of each curve are antipodal points at infinity (and so each pseudoline cuts the plane into two components). Any two curves intersect once, and in the intersection point cross or they share their endpoints (at infinity) and are disjoint otherwise. We require first that for every pair of points $\{x, y\}$ of the plane there exists a unique pseudoline containing $x$ and $y$ and that this pseudoline depends continuously on $x$ and $y$ in the Hausdorff metric. We require also that the intersection of two pseudolines depends continuously on the two pseudolines. 

Note that any finite subset of the pseudolines of a TAP forms a finite pseudoline arrangement as defined in Section \ref{sec:geomconseq}. Conversely, in \cite{TAP} it is shown that every finite pseudoline arrangement can be extended to a TAP. In \cite{TAPconvex} they investigated extensions of classical convexity results to TAPs. Given two points, the part of the unique pseudoline connecting the two points is called the pseudosegment defined by the two points. Using this, in a TAP we can define convexity as we usually do in the plane: a set of points is convex if for every pair of its points the set also contains the pseudosegment connecting these two points. Recall that given a finite pseudohalfplane family, we defined its convex sets as the intersections of subfamilies of the pseudohalfplanes. It is not hard to see that these sets are also convex in any extension to a TAP with the above definition that uses pseudosegments. 

Given a pseudohalfplane hypergraph, we can take any realization with points and pseudohalfplanes (by \cite{abafree} this exists) and then extend this to a TAP.
When doing this, the family of convex sets may change. More precisely, given a pseudohalfplane hypergraph $\HH$, it can be represented by a point set $P$ and a family of pseudohalfplanes. This family of pseudohalfplanes has an extension to a TAP. The pseudohalfplanes of this TAP induce on $P$ a pseudohalfplane hypergraph $\HH'$ whose subhypergraph is $\HH$, but we cannot guarantee that it is equal to $\HH$. I.e., when extending to a TAP, new hyperedges might appear. When adding hyperedges, we may get a bigger family of convex sets and the convex hull of a set of points may get smaller. Thus, given a pseudohalfplane hypergraph, while we are able to translate it to a pseudohalfplane family and then to a TAP, during this process we may get a different statement in this translated setting. The advantage of the original abstract setting is that it can directly handle pseudohalfplane hypergraphs that are subhypergraphs of pseudohalfplane hypergraphs that arise from TAPs.

In \cite{TAPconvex} they prove Helly's Theorem, Carath\'eodory's Theorem, Kirchberger's Theorem, Separation Theorem, Radon's Theorem for TAPs. We have seen that they may not imply directly our results. However, it is quite straightforward to check how the conditions and conclusions of our theorems change when adding new hyperedges to the pseudohalfplane hypergraph, and see that every one of these results on a $\HH'$ pseudohalfplane hypergraph implies the same on its subhypergraph $\HH$. Thus, earlier results about TAPs do imply our respective results about pseudohalfplane families and in turn also about pseudohalfplane hypergraphs. 
Furthermore, in \cite{pseudoerdosszekeres} they consider the Erd\H os-Szekeres problem in TAPs and prove a result which is best known even in the plane. In addition, a  mostly combinatorial proof using wiring diagrams of the Cup-Cap Theorem for finite families of pseudohalfplanes (i.e. the geometric equivalent of Theorem \ref{thm:pseudoesz}) can be found in \cite{dobbins}.

In the reverse direction, e.g., by approximating TAPs by finite families of pseudolines and applying Theorem \ref{thm:weakhellypsconvexplane} for each of them we get a set of points which has at least one accumulation point and this has the required properties of Helly's Theorem for TAPs. On the other hand, using such simple approximation arguments does not seem to be enough to show all the other aforementioned results for TAPs using our discrete results.

Overall, it turns out that most of our results (in particular Helly's Theorem, Carath\'eodory's Theorem, Kirchberger's Theorem, Separation Theorem, Radon's Theorem and the Cup-Cap Theorem) are proved already in the context of TAPs. However, the proofs about TAPs rely on the continuous geometry of TAPs whereas our results are strictly about discrete combinatorial structures. Indeed, while many times they both resemble the original planar proofs, they also feel very different in nature, with a geometric reasoning on one side and a combinatorial reasoning on the other. We think that this completely different approach can lead to new insights and thus to new results, possibly improving even old results about convex sets in the plane.

\smallskip
\textbf{Oriented matroids of rank $3$.}
Our treatment of oriented matroids is based on the book by Bj\"orner et al. \cite{bjorner}, which we also recommend for the interested reader. Chapter 6 of \cite{handbook} by Richter-Gebert and Ziegler gives a more brief introduction to the area. We try to be as concise as possible and so we omit many of the definitions and technicalities, in particular we concentrate on oriented matroids of rank $3$. Nevertheless, before we can phrase the connection, we need to give some introduction to oriented matroid theory.

Oriented matroids were invented multiple times independently and have several different but equivalent axiomatizations. One way to define an oriented matroid $\MM$ is on a vertex set $S$ by its set of \emph{circuits}, which is a set of sign-vectors, each of which assigns one of $\{-,+,0\}$ to each vertex. These sign-vectors must satisfy the so-called circuit axioms. A certain composition rule of circuits generates the \emph{vectors} of $\MM$, a set of sign-vectors that includes all the circuits. By a certain \emph{orthogonality rule} one can define the covectors as all the sign-vectors that are orthogonal to every vector of $\MM$. A special subset of covectors is the set of \emph{cocircuits}. An oriented matroid can be equivalently defined via its circuits, vectors, cocircuits or covectors (among others). The support of a sign-vector is the set of vertices where it is non-zero. A maximal covector or \emph{tope} of a matroid $\MM$ is a covector whose support is maximal with respect to inclusion.

One can define the rank of an oriented matroid. We omit the definition, instead we give the geometric intuition of the rank, before which we need to introduce some notions. Following \cite{bjorner}, Section 1.4, a subset $S$ of $S_d$ is called a \emph{pseudosphere} if $S = h(S_{d-1})$ for some homeomorphism $h: S_d \rightarrow S_d$, where $S_{d-1} = \{x \in S_d : x_{d+1} = 0\}$. $S_d\setminus S$ has two connected components, $S^+$, $S^-$, the sides of $S$, which we call (open) \emph{pseudohemispheres}. A finite family of pseudospheres $\cal S$ is called an \emph{arrangement of pseudospheres} if every non-empty intersection $S'$ of a subfamily of the pseudospheres is (homeomorphic to) a sphere of some dimension and for every $S\in \cal S$ with $S'\not \subseteq S$, $S'\cap S$ is a pseudosphere in $S'$ with sides $S'\cap S^+$ and $S'\cap S^-$. The pseudohemispheres of an arrangement of pseudospheres is called an \emph{arrangement of pseudohemispheres}.
In dimension $2$ (which case corresponds to oriented matroids of rank $3$) this can be regarded as a family of regions on $S^2$ whose boundaries are centrally symmetric simple curves such that any two intersect exactly twice (see also the remark after Definition \ref{def:pshemi}). Note that in \cite{abafree} for a pseudohemisphere hypergraph a very similar representation is given: vertices are mapped to points and hyperedges are mapped to pseudohemispheres of an arrangement of pseudohemispheres on $S^2$ such that containments are preserved (and this mapping can again be reversed). Assuming that in each cell there is exactly one point, this gives a mapping between an oriented matroid and the pseudohemisphere hypergraph.

The Topological Representation Theorem by Folkman and Lawrence says that one can represent a rank $d+1$ simple\footnote{The definition of a simple oriented matroid we also omit.} oriented matroid such that the vertex set is mapped to an arrangement of open pseudohemispheres on the $d$-dimensional sphere $S^d$ and each tope is mapped to a cell of the arrangement defined by these pseudohemispheres such that it preserves signatures (i.e., given the open pseudohemisphere $H_v$ corresponding to vertex $v$ and a cell $K_c$ corresponding to the tope $c$, the sign of $K_c$ on $v$ is `$+$' if $H_v$ contains $K_c$ and `$-$' otherwise) and vice versa (i.e., a family of pseudohemispheres on $S^d$ can be mapped to an oriented matroid of rank $d+1$).

On the set of topes of a matroid there is a so-called $T$-convexity defined. Recall that a tope corresponds to a cell in the above representation. Moreover, it turns out that on the set of topes, a subset is defined to be $T$-convex exactly if in the above representation it is the intersection of pseudohemispheres, thus we can phrase, e.g., Theorem \ref{thm:weakhellypsconvexplane} in terms of topes and $T$-convex sets of oriented matroids. However, we did not find (at least not in \cite{bjorner}) any appearance of classical convexity results about $T$-convex sets.

The above connection between oriented matroids of rank $3$ and pseudohemisphere hypergraphs via the geometric representation by pseudohemispheres on $S^2$ is dual in the following sense: vertices of the matroid correspond to pseudohemispheres which in turn correspond to hyperedges of a pseudohemisphere hypergraph. However, by \cite{abafree} the dual (for containment)\footnote{Here one has to be cautious with this notion of dual where we map vertices into hyperedges and vice versa, as the notion of dual in matroids has a very different meaning. In fact the notion of an adjoint of a matroid is the one similar to hypergraph duality.} of a pseudohemisphere hypergraph is also a pseudohemisphere hypergraph, so there is hope for a more straightforward connection (where vertices of the matroid are mapped to vertices of the hypergraph). And indeed, in addition to the above well-known geometric representation of matroids, which is called in \cite{bjorner} a Type I representation (which every rank $d+1$ matroid has), there is also a less known dual one using the so-called pseudoconfiguration of points, which is called a Type II representation (\cite{bjorner}, Definition 5.3.1.). 

\begin{defi}
	A \emph{pseudoconfiguration of points} of rank $d+1$ is a pair $(\AA,P)$ where $\AA$ is an essential (i.e., they don't have a common intersection) arrangement of pseudospheres (i.e., the boundaries of a family of pseudohemispheres) on $S^d$, and $P$ is a set of at least $d$ points on $S^d$ such that: every $d$-tuple of points from $P$ is on some pseudosphere of $\AA$ and every pseudosphere of $\AA$ contains a subset of $P$ not contained in any other pseudosphere of $\AA$.
\end{defi}

Every pseudoconfiguration of points uniquely determines an oriented matroid on $P$, whose cocircuits are the following: for each pseudosphere and choice of positive side of this pseudosphere, there is a sign-vector which is `$+$' for a point if it is on the positive side, `$-$' when it is on the negative side and `$0$' when it is on the pseudosphere. A Type II representation of an oriented matroid $\MM$ of rank $d+1$ is a pseudoconfiguration of points that determines $\MM$ this way.

Unlike Type I representation, not every oriented matroid of rank $d+1$ has a Type II representation: $\MM$ has one if and only if $\MM$ satisfies the intersection property IP$_0$, which is equivalent to $\MM$ having an adjoint (we omit the definition, the proof uses the existence of the Type I representation for its adjoint, see \cite{bjorner}). Luckily a rank $3$ oriented matroid always satisfies IP$_0$ (\cite{bjorner} Proposition 6.3.6) and thus has a Type II representation.

Given a pseudohemisphere hypergraph, it can be represented by a set of points and a family of pseudohemispheres on $S^2$ by \cite{abafree}. The boundaries of the pseudohemispheres form an arrangement of pseudospheres (also called pseudogreatcircles when on $S^2$). By extending this with additional points and pseudogreatcircles, we can get a pseudoconfiguration of points of rank $3$. This uniquely determines an oriented matroid $\MM$ of rank $3$.

There is a convex-set definition for oriented matroids. By \cite{bjorner}, Theorem 9.2.1, which collects results by Las Vergnas and Bachem and Wanka, the convex sets in a Type II representation are exactly those point sets that are intersections of closed pseudohemispheres determined by the pseudogreatcircles of the pseudoconfiguration of points. This theorem also implies that if the rank $3$ oriented matroid is \emph{acyclic}, then for these convex sets Carath\'eodory's Theorem, Radon's Theorem, Helly's Theorem, Separation Theorem all hold.\footnote{In fact the theorem proves this for acyclic oriented matroids with the so-called Generalized Euclidean intersection property IP$_2$ (\cite{bjorner}, Definition 7.5.2) of arbitrary fixed rank $r$, not just $r=3$. As mentioned before, every acyclic oriented matroid of rank $3$ has the IP$_0$ property and IP$_0$ implies IP$_2$ (\cite{bjorner}, Proposition 7.5.3).} The definition of being acyclic when translated to the corresponding pseudoconfiguration of points means that there is a pseudohemisphere that contains all points.

Given a pseudohalfplane hypergraph, we can add the hyperedge containing all vertices and then by possibly adding further hyperedges and vertices we get a pseudohalfplane hypergraph (which is also a pseudohemisphere hypergraph) that can be mapped to an oriented matroid of rank $3$ which is also acyclic as there is a pseudohemisphere containing all points.

Then the above convexity results about acyclic oriented matroids imply our convexity results for this extended pseudohalfplane hypergraph. Finally, similar to how the results about TAPs implied our results, these also imply the respective results about the original pseudohalfplane hypergraph.

Summarizing, when trying to show our convexity results about pseudohalfplane hypergraphs via oriented matroids, we first had to extend our hypergraph so that it becomes an (acyclic rank $3$) oriented matroid. Only then could we apply the results about oriented matroids. Also for the same reason, i.e., the structure becoming richer, the conclusion is not exactly what we need, but manually checking the statements it turns out that they imply those in a straightforward way.

What about the reverse direction? Given an acyclic oriented matroid of rank $3$ it has a Type II representation, by adding perturbed copies of the pseudogreatcircles we can easily achieve that the point sets defined by closed pseudohemispheres of the original pseudoconfiguration are defined by open pseudohemispheres of the new configuration. Then we can regard this as an arrangement of points and pseudohalfplanes and then our convexity results imply the respective results about the original oriented matroid of rank $3$.

Thus for rank $3$ acyclic oriented matroids Theorem 9.2.1 of \cite{bjorner} is formally weaker (as it talks only about certain pseudohalfplane hypergraphs) but practically equivalent to our set of results.

For another comparison of our approach and oriented matroids, we note that oriented matroid axioms are global in the sense that, e.g., for a pair of signed sets they guarantee a third one with a certain property. On the other hand, once the signature of the hyperedges and vertices is fixed (i.e. the family of topsets within the hyperedges and the set $X$ within the vertex set), our definitions are local, that is they only say that every pair of hyperedges must have a certain property. This is one particular advantage of our approach, which comes from the fact that pseudohemisphere and pseudohalfplane hypergraphs in general can be proper subfamilies of such hypergraphs that come from an oriented matroid.

Without going into more details, we mention that extremal vertices were also defined for acyclic oriented matroids (see \cite{bjorner} for further details). 

Recently, generalizations of oriented matroids called complexes of oriented matroids were defined \cite{comdef}. Kirchberger's Theorem was proved in this more general context \cite{com-kirchberger}.

\smallskip
\textbf{P-convex hulls of pseudolines.}
There is yet another way in the literature to define convexity and state respective results on pseudolines arrangements. This approach circumvents the notion of a pseudohalfplane. To do this, it needs to work in the dual setting. Given a finite arrangement of pseudolines $\AA$ and a point $p$ not contained in any member of $\AA$, we say that a pseudoline $l\in \AA$ is in the $p$-convex hull of a subfamily $\BB \subset \AA$ if every path from $p$ to a point of $l$ meets some member of $\BB$. About $p$-convex hulls many classical results are proved, e.g., Helly's Theorem, Carath\'eodory's Theorem, Kirchberger's Theorem, Separation Theorem, Radon's Theorem among others (see Chapter 5 of \cite{handbook} by Felsner and Goodman for references).

One immediately sees that the $p$-convex hull is a dual notion as instead of defining convex hulls of points we define convex hulls of pseudolines (similar to the Type I representation of oriented matroids). We can make this more formal. Define the pseudohalfplane family $\EE$ such that for each pseudoline of $\AA$ we take its side which contains $p$. Put a point into every face of the arrangement $\AA$ to get the point set $P$ (we also assume that $p\in P$). Now $\EE$ defines a pseudohalfplane hypergraph $\HH$ on $P$. We get a pseudohalfplane hypergraph whose every hyperedge contain $p$ (more precisely the vertex corresponding to $p$). It was shown in \cite{abafree} that the dual of a pseudohalfplane hypergraph $\HH$ is also a pseudohalfplane hypergraph $\hat \HH$ provided there is a point in $P$ contained by all hyperedges of $\HH$\footnote{In fact in \cite{abafree} this is proved if all hyperedges avoid a point $p$ but taking the complement of every hyperedge of $\HH$ we can apply this to get the same in case all hyperedges contain a point $p$.}. Now it is easy to see that a line $l\in \AA$ is in the $p$-convex hull of $\BB$ if and only if the hyperedge $h_l$ of $\HH$ corresponding to $l$ contains the hyperedge corresponding to $b$ for every $b\in \BB$, that is if a vertex $q\in P$ is in the hyperedge corresponding to $b$ for every $b\in \BB$ then $q$ is also in $h_l$. In $\hat \HH$ this is equivalent to saying that if a  hyperedge $h_q$ contains the vertex $\hat b$ corresponding to $b$ for every $b\in \BB$ then it also contains $\hat h_l$, the vertex corresponding to $l$. Notice that this is equivalent to saying that $\hat h_l\in Conv(\hat \BB)$ where $\hat \BB$ is the family of vertices of $\hat \HH$ that are the duals of the hyperedges of $\HH$ corresponding to the pseudolines in $\BB$.

Thus, by identifying the pseudolines with the pseudohalfplanes that have this pseudoline as a boundary and contain $p$ and then taking the dual, we got back to our notion of a convex hull. Without going into details we claim that from this it easily follows that, e.g., our Carath\'eodory's theorem implies the respective Carath\'eodory's theorem about $p$-convex hulls. On the other hand for results which introduce a new object, e.g., Helly's Theorem, this does not work. While we can do the dualization and apply our result in this dual setting to find the required vertex, afterwards it is not clear how we can dualize back this to get a required pseudoline in the original setting (again there is the limitation to when we can dualize). Most probably in such a situation we can instead repeat the abstract proof directly in the primal setting, dualizing the arguments. However, we did not work out the details. Overall, some results about $p$-convex hulls follow from our results just by dualization while for others it is not clear if and how this can be done, in which cases one may be able to dualize the arguments to give abstract proofs for such results about $p$-convex hulls.

One can also ask about implications in the other direction, that is, if a result about $p$-convex hulls implies our respective result. The problem with this is again that in order the dual of a pseudohalfplane hypergraph to also be a pseudohalfplane hypergraph, we needed the special point that is in every hyperedge. Thus, if such a point does not exist we cannot dualize and so we do not know how a result about $p$-convex hulls would imply a result about our setting. Thus in this sense, e.g.. Carath\'eodory's Theorem about $p$-convex hulls is a special case of our Carath\'eodory's Theorem. Recall that $p$-convex hulls can be phrased using pseudohalfplanes which all contain a common point, which resembles more the easier special case of upwards pseudohalfplanes (which correspond to ABA-free hypergraphs) than the general case of pseudohalfplanes.

\smallskip
\textbf{ABAB-free hypergraphs.}
Our theory is built on the notion of ABA-free hypergraphs. Similar to them, in \cite{abafree} ABAB-free (and ABABA-free etc.) hypergraphs were defined and in \cite{abab} it was shown that they are equivalent to hypergraphs defined on a point set by pseudodisks all containing the origin. Do some of our results about halfplanes and convex sets translate to ABAB-free hypergraphs? We note that one important property of pseudohalfplane hypergraphs used in \cite{abafree} is that they have shallow hitting sets (for definitions see \cite{abafree}). On the other hand, it was also proved there that ABAB-free hypergraphs do not admit shallow hitting sets. Nevertheless, some positive results were proved in \cite{abab}.

\smallskip
\textbf{Convexity spaces.}
There are various other abstract relaxations of convexity. These usually cover a significantly wider range of settings, while still having Helly-type etc. properties, albeit this comes at a price. Most relevant to us is the notion of a convexity space (also called as a closure structure). A {\it finite convexity space} $\CC$ is a hypergraph on a finite vertex set $S$ such that the $\emptyset,S\in \CC$ and for every pair of hyperedges $A,B\in \CC$ $A\cap B\in \CC$. Here the convex hull of a subset of vertices is defined as the intersection of  the convex sets (i.e., hyperedges of $\CC$) containing it. Notice that for any hypergraph $\HH$ on vertex set $S$ containing $\emptyset $ and $S$, the convex sets of $\HH$ defines according to Definition \ref{def:pseudoconvex} form a convexity space, where the convex hulls of this convexity space are the same as what we get from Definition \ref{def:pseudoconvex}. In this respect we regard a special convexity space, which is defined as the convex sets of pseudohalfplane hypergraphs and prove analogs of classical planar convexity results. While convexity spaces in general have a lot of interesting properties, it is a much more general framework and thus allows only to show weaker or only conditional variants of classical planar convexity results. It is beyond the scope of this paper to go into further details, we refer the reader to the book \cite{convexstructures} for an overview of the theory of convexity spaces.

\smallskip
\textbf{Further extensions of convexity.}
As classical convex geometric theorems have a very central role, it is impossible to list every variant and extension. Thus, without the aim of completeness, we list a few more of them that are more recent or seem closer to our results.

The relation of points and pseudolines in the plane can be encoded by allowable sequences of permutations. One can again define convexity, in particular the Erd\H os Szekeres problem was regarded in this setting (for references see Chapter 5 of \cite{handbook} by Felsner and Goodman).

In \cite{barany2021hellytype} they survey recent developments related to classical convexity results. In particular, following \cite{MY}, given a hypergraph (with bounded VC-dimension), whose hyperedges represent `halfspaces', they define MY-convexity where the convex sets are the intersections of these halfspaces. This is exactly the same as our central Definition \ref{def:pseudoconvex}.

In \cite{bergold2021topological} they discuss classical theorems about convexity in the context of topological drawings.

\section{Discussion}\label{sec:discussion}

We have presented a method to generalize statements about discrete point sets, halfplanes and convex sets to statements on discrete points sets, pseudohalfplanes and pseudoconvex sets. Our setting is purely combinatorial and built using elementary parts, starting with the notion of a hypergraph being ABA-free everything else is built up step-by-step. This offers a very simple axiomatization of planar (pseudo)convexity. We managed to generalize this way many classical results about planar convexity. This discrete relaxation of planar geometry is significantly more general than the planar setting yet still allows us to prove statements that are exactly like their geometric counterparts. Also, many natural families of regions are pseudohalfplane families, thus our generalizations have immediate geometric consequences (e.g., about translates of an unbounded convex region). We compared our results to other similar results about TAPs, oriented matroids and $p$-convex hulls, pinpointing the connections and differences between them.

There are many further important results about convex sets, yet it falls beyond the scope of this paper to consider all of them for pseudoconvex sets. We list a few of these possibilities: colorful Carath\'eodory's Theorem, colorful Helly Theorem, Tverberg's Theorem, colorful Tverberg's Theorem, fractional versions, results about the existence of empty $k$-holes; are these true for pseudoconvex sets? It is easy to check that the colorful Helly Theorem for pseudoconvex sets follows from a result of in (\cite{Bokowski2011}, Theorem 5). The rest of these problems we leave as open problems. Further, it is interesting to see if problems open about convex sets can be improved in the context of pseudoconvex sets, in particular finding a maximal subset of points in a convex position, bounding the size of weak epsilon-nets for convex sets or the number of k-sets. We mentioned earlier that the best known result about the size of a maximal convex subset of points does extend to the geometric pseudohalfplane setting via TAPs \cite{pseudoerdosszekeres} and thus also to pseudohalfplanes. It would be interesting to give a direct proof of the respective statement about pseudohalfplane hypergraphs using our methods.

It would be interesting to develop a similar framework of higher dimensional discrete pseudoconvexity. On one hand, it is well-known (see, e.g., the discussion in \cite{TAPconvex}) that in dimension $d\ge 3$ every topological affine $d$-space is isomorphic to the Euclidean $d$-space, on the other hand convexity results do hold for acyclic oriented matroids of any given rank \cite{bjorner}. We do not know if those can be also rephrased in a similar way to how we treat pseudohalfplane hypergraphs, that is, using some kind of ordering of the vertices and some forbidden alternations. Thus an abstract theory similar to ours for higher dimensional pseudohalfspaces and convex sets may be possible to phrase.

Finally, we mention that several computational and enumerational problems arise related to ABA-free and pseudohalfplane hypergraphs. Let us mention a few of them. First, how efficiently can one decide/witness that a hypergraph is ABA-free? Second, given a hypergraph on an ordered (resp. unordered) set of vertices, how efficiently can one decide/witness if this is a pseudohalfplane hypergraph with some choice of what are its top and bottom vertices (resp. also choice of vertex order)? What is the possible number of such valid choices if the hypergraph is known to be a pseudohalfplane hypergaph?

\subsubsection*{Acknowledgement}

The author is grateful to D. P\'alv\"olgyi for the many discussions about these results and to A. Holmsen for the several insightful comments (primarily but not exclusively) about the connections to earlier results, especially to oriented matroids; and finally to the reviewers whose feedback made the presentation much better.

\bibliographystyle{abbrvnat}
\bibliography{psdisk}

\begin{thebibliography}{25}
\providecommand{\natexlab}[1]{#1}
\providecommand{\url}[1]{\texttt{#1}}
\expandafter\ifx\csname urlstyle\endcsname\relax
  \providecommand{\doi}[1]{doi: #1}\else
  \providecommand{\doi}{doi: \begingroup \urlstyle{rm}\Url}\fi

\bibitem[Ackerman et~al.(2020)Ackerman, Keszegh, and
  P{\'{a}}lv{\"{o}}lgyi]{abab}
E.~Ackerman, B.~Keszegh, and D.~P{\'{a}}lv{\"{o}}lgyi.
\newblock Coloring hypergraphs defined by stabbed pseudo-disks and {ABAB}-free
  hypergraphs.
\newblock \emph{SIAM Journal on Discrete Mathematics}, 34\penalty0
  (4):\penalty0 2250--2269, 2020.

\bibitem[Bandelt et~al.(2018)Bandelt, Chepoi, and Knauer]{comdef}
H.-J. Bandelt, V.~Chepoi, and K.~Knauer.
\newblock {COMs: Complexes of oriented matroids}.
\newblock \emph{Journal of Combinatorial Theory, Series A}, 156:\penalty0
  195--237, 2018.
\newblock ISSN 0097-3165.

\bibitem[B\'ar\'any and Kalai(2022)]{barany2021hellytype}
I.~B\'ar\'any and G.~Kalai.
\newblock Helly-type problems.
\newblock \emph{Bulletin of The American Mathematical Society}, 59:\penalty0
  471--502, 2022.

\bibitem[Bergold et~al.(2022)Bergold, Felsner, Scheucher, Schr{\"o}der, and
  Steiner]{bergold2021topological}
H.~Bergold, S.~Felsner, M.~Scheucher, F.~Schr{\"o}der, and R.~Steiner.
\newblock Topological drawings meet classical theorems from convex geometry.
\newblock \emph{Discrete {\&} Computational Geometry}, 2022.
\newblock ISSN 1432-0444.
\newblock \doi{10.1007/s00454-022-00408-6}.
\newblock URL \url{https://doi.org/10.1007/s00454-022-00408-6}.

\bibitem[{Bj\"orner} et~al.(1999){Bj\"orner}, Las~Vergnas, Sturmfels, White,
  and Ziegler]{bjorner}
A.~{Bj\"orner}, M.~Las~Vergnas, B.~Sturmfels, N.~White, and G.~M. Ziegler.
\newblock \emph{Oriented Matroids}.
\newblock Encyclopedia of Mathematics and its Applications. Cambridge
  University Press, 2nd edition, 1999.

\bibitem[Bokowski et~al.(2011)Bokowski, Bracho, and Strausz]{Bokowski2011}
J.~Bokowski, J.~Bracho, and R.~Strausz.
\newblock Carath{\'e}odory-type theorems {\`a} la b{\'a}r{\'a}ny.
\newblock \emph{Discrete {\&} Computational Geometry}, 45\penalty0
  (2):\penalty0 261--271, Mar 2011.

\bibitem[Dam{\'a}sdi and P{\'a}lv{\"o}lgyi(2022)]{dp}
G.~Dam{\'a}sdi and D.~P{\'a}lv{\"o}lgyi.
\newblock Realizing an m-uniform four-chromatic hypergraph with disks.
\newblock \emph{Combinatorica}, 42\penalty0 (1):\penalty0 1027--1048, 2022.
\newblock ISSN 1439-6912.

\bibitem[Dhandapani et~al.(2007)Dhandapani, Goodman, Holmsen, Pollack, and
  Smorodinsky]{TAPconvex}
R.~Dhandapani, J.~E. Goodman, A.~Holmsen, R.~Pollack, and S.~Smorodinsky.
\newblock Convexity in topological affine planes.
\newblock \emph{Discrete \& Computational Geometry}, 38\penalty0 (2):\penalty0
  243--257, 2007.
\newblock ISSN 1432-0444.

\bibitem[Dobbins et~al.(2014)Dobbins, Holmsen, and Hubard]{dobbins}
M.~G. Dobbins, A.~Holmsen, and A.~Hubard.
\newblock {The Erd\H os-Szekeres problem for non-crossing convex sets}.
\newblock \emph{Mathematika}, 60\penalty0 (2):\penalty0 463--484, 2014.

\bibitem[Eckhoff(1993)]{Eckhoff}
J.~Eckhoff.
\newblock {Chapter 2.1 - Helly, Radon, and Carath\'eodory Type Theorems}.
\newblock In P.~Gruber and J.~Wills, editors, \emph{Handbook of Convex
  Geometry}, pages 389--448. North-Holland, Amsterdam, 1993.
\newblock ISBN 978-0-444-89596-7.

\bibitem[Fox et~al.(2012)Fox, Pach, Sudakov, and Suk]{eszmonotone}
J.~Fox, J.~Pach, B.~Sudakov, and A.~Suk.
\newblock {{Erd\H os-Szekeres-type theorems for monotone paths and convex
  bodies}}.
\newblock \emph{Proceedings of the London Mathematical Society}, 105\penalty0
  (5):\penalty0 953--982, 05 2012.
\newblock ISSN 0024-6115.

\bibitem[Goodman et~al.(1994)Goodman, Pollack, Wenger, and Zamfirescu]{TAP}
J.~E. Goodman, R.~Pollack, R.~Wenger, and T.~Zamfirescu.
\newblock Arrangements and topological planes.
\newblock \emph{The American Mathematical Monthly}, 101\penalty0 (9):\penalty0
  866--878, 1994.
\newblock ISSN 00029890, 19300972.

\bibitem[Goodman et~al.(2017)Goodman, O'Rourke, and T\'oth]{handbook}
J.~E. Goodman, J.~O'Rourke, and C.~D. T\'oth.
\newblock \emph{Handbook of Discrete and Computational Geometry}.
\newblock CRC Press, Inc., USA, 2017.
\newblock ISBN 978-1498711395.

\bibitem[Hochst\"attler et~al.(2021)Hochst\"attler, Keip, and
  Knauer]{com-kirchberger}
W.~Hochst\"attler, S.~Keip, and K.~Knauer.
\newblock Kirchberger's theorem for complexes of oriented matroids, 2021.

\bibitem[Holmsen et~al.(2022)Holmsen, Mojarrad, Pach, and
  Tardos]{pseudoerdosszekeres}
A.~F. Holmsen, H.~N. Mojarrad, J.~Pach, and G.~Tardos.
\newblock {Two extension of the Erd\H os-Szekeres problem}.
\newblock \emph{J. Eur. Math. Soc.}, 22\penalty0 (12):\penalty0 3981--3995,
  2022.

\bibitem[Hubard et~al.(2011)Hubard, Montejano, Mora, and Suk]{Hubard}
A.~Hubard, L.~Montejano, E.~Mora, and A.~Suk.
\newblock Order types of convex bodies.
\newblock \emph{Order}, 28\penalty0 (1):\penalty0 121--130, 2011.
\newblock ISSN 1572-9273.

\bibitem[Jensen et~al.(2020)Jensen, Joshi, and Ray]{jjr}
F.~B. Jensen, A.~Joshi, and S.~Ray.
\newblock {Discrete Helly type theorems}.
\newblock In \emph{Proceedings of the 30th Annual Canadian Conference on
  Computational Geometry, {CCCG} 2020, August 5-7, 2020, University of
  Saskatchewan, Saskatoon, Saskatchewan, Canada}, pages 332--335, 2020.

\bibitem[Keszegh(2022)]{kbdiscretehelly}
B.~Keszegh.
\newblock {Discrete Helly-type theorems for pseudohalfplanes}.
\newblock \emph{European Journal of Combinatorics}, 101:\penalty0 103469, 2022.
\newblock ISSN 0195-6698.

\bibitem[Keszegh and P{\'a}lv{\"o}lgyi(2019)]{abafree}
B.~Keszegh and D.~P{\'a}lv{\"o}lgyi.
\newblock {An abstract approach to polychromatic coloring: shallow hitting sets
  in ABA-free hypergraphs and pseudohalfplanes}.
\newblock \emph{J. Comput. Geom.}, 10:\penalty0 1--26, 2019.

\bibitem[Keszegh and P\'alv\"olgyi(2023)]{cogezoo}
B.~Keszegh and D.~P\'alv\"olgyi.
\newblock Geometric hypergraph zoo, 2023.
\newblock URL \url{http://coge.elte.hu/cogezoo.html}.

\bibitem[Moran and Yehudayoff(2020)]{MY}
S.~Moran and A.~Yehudayoff.
\newblock {On Weak $\epsilon$-Nets and the Radon Number}.
\newblock \emph{Discrete {\&} Computational Geometry}, 64\penalty0
  (4):\penalty0 1125--1140, 2020.
\newblock ISSN 1432-0444.

\bibitem[Moshkovitz and Shapira(2014)]{moshkovitz}
G.~Moshkovitz and A.~Shapira.
\newblock Ramsey theory, integer partitions and a new proof of the {Erd\H
  os-Szekeres} theorem.
\newblock \emph{Advances in Mathematics}, 262:\penalty0 1107--1129, 2014.
\newblock ISSN 0001-8708.

\bibitem[Smorodinsky and Yuditsky(2012)]{smorodinsky-yuditsky}
S.~Smorodinsky and Y.~Yuditsky.
\newblock Polychromatic coloring for half-planes.
\newblock \emph{Journal of Combinatorial Theory, Series A}, 119\penalty0
  (1):\penalty0 146--154, 2012.
\newblock ISSN 0097-3165.

\bibitem[T\'oth and Valtr(2004)]{tv_esz}
G.~T\'oth and P.~Valtr.
\newblock {The Erd\H os-Szekeres theorem: upper bounds and related results}.
\newblock In \emph{Combinatorial and Computational Geometry}, MSRI Publications
  52, pages 557--568. Cambridge University Press, Cambridge, 2004.

\bibitem[Vel(1993)]{convexstructures}
M.~V.~d. Vel.
\newblock \emph{Theory of Convex Structures}.
\newblock North-Holland, 1993.
\newblock ISBN 780444815057.

\end{thebibliography}

\end{document}